\newtheorem*{theorem*}{Theorem}
\newtheorem{theorem}{Theorem}[section]
\newtheorem{lemma}[theorem]{Lemma}
\newtheorem{corollary}[theorem]{Corollary}
\newtheorem{proposition}[theorem]{Proposition}
\theoremstyle{definition}
\newtheorem{definition}[theorem]{Definition}
\newtheorem{remark}[theorem]{Remark}
\newtheorem{example}[theorem]{Example}
\def\it{i^{(t)}_{m,s}}
\def\ib{i^{(b)}_{m,s}}
\def\d{\partial}
\def\R{\mathbb{R}}
\def\Z{\mathbb{Z}}
\def\F{\mathbb{F}}
\def\CZhat{\widehat{\cC}_\Z}
\def\CZ{{\cC}_\Z}
\def\X{\mathbb{X}}
\def\II {\mathcal{I}}
\def\JJ {\mathcal{J}}
\def\dij {\Delta_{\II,\JJ}}
\def\aa {\widetilde{\alpha}}
\def\bb {\widetilde{\beta}}
\def\l {\ell}
\def\co{\colon\thinspace}
\def\x {\mathbf{x}}
\def\w {\omega}
\def\H{\mathcal{H}}
\def\CZhat{\widehat{\mathcal{C}}_\Z}
\def\CZ{{\mathcal{C}}_\Z}
\def\CFKi {\operatorname{CFK}^\infty}
\def\max {\operatorname{max}}
\def\min {\operatorname{min}}
\def\wa{{\widetilde \alpha}}
\def\wb{{\widetilde \beta}}
\DeclarePairedDelimiter{\ceil}{\lceil}{\rceil}
\DeclarePairedDelimiter{\floor}{\lfloor}{\rfloor}
\DeclareMathOperator{\sgn}{sgn}
\def\wH{\widetilde{H}}
\tikzset{% inspired by https://tex.stackexchange.com/a/316050/121799
    arc arrow/.style args={%
    to pos #1 with length #2}{
    decoration={
        markings,
         mark=at position 0 with {\pgfextra{%
         \pgfmathsetmacro{\tmpArrowTime}{#2/(\pgfdecoratedpathlength)}
         \xdef\tmpArrowTime{\tmpArrowTime}}},
        mark=at position {#1-\tmpArrowTime} with {\coordinate(@1);},
        mark=at position {#1-2*\tmpArrowTime/3} with {\coordinate(@2);},
        mark=at position {#1-\tmpArrowTime/3} with {\coordinate(@3);},
        mark=at position {#1} with {\coordinate(@4);
        \draw
        % [-{Stealth[length=#2,bend]}]      
        (@1) .. controls (@2) and (@3) .. (@4);},
        },
     postaction=decorate,
     }
}
\title{On homology concordance in contractible manifolds and two bridge links}
\author{Hugo Zhou}
\address{School of Mathematics, Georgia Institute of Technology, Atlanta, GA, USA}
\email{hzhou@gatech.edu}
\begin{document}

\maketitle

\begin{abstract}
Let $\CZhat$ be the group consists of manifold-knot pairs $(Y,K)$ modulo homology concordance, where $Y$ is an integer homology sphere bounding an integer homology ball, and let $\CZ$ be the subgroup consisting of pairs $(S^3,K)$.
Dai-Hom-Stoffregen-Truong \cite{Homoconcor} show that the quotient group $\CZhat/\CZ$ admits a $\Z^\infty$-summand.  In this paper, we improve the result by showing that there exists a family $\{(Y,K_m)\}_{m>1 }$ generating  the $\Z^\infty$-summand  where $Y$ is the boundary of a smooth contractible $4$-manifold. In fact, we give a $\Z$-count of such families.

The examples are constructed using a family of knots obtained by blowing down a component of a two-bridge link. They are studied in Jonathan Hales's thesis \cite{jhales}. Using the algorithm due to Ozsv\'{a}th, Szab\'{o} and Hales we give a classification of the knot Floer homology of a larger family of such knots, that might be of independent interest.
\end{abstract}

\section{Introduction}\label{sec:intro}
The integer homology concordance group $\CZhat$ consists of pairs $(Y,K)$ where  $Y$ is an integer homology sphere bounding an integer homology ball and $K$ is a knot in $Y$, where the group operation is induced by the connected sum. Two classes $(Y_1,K_1) $ and $ (Y_2,K_2) $ are equivalent if and only if  $\partial(W,\Sigma)= (Y_1,K_1) \sqcup -(Y_2,K_2),$ where $W$ is an integer homology cobordism and $\Sigma$ a smoothly embedded cylinder in $W$. The subgroup $\CZ$ consists of pairs $(S^3,K).$ A class $(Y,K) \neq 0$ in $\CZhat/ \CZ$ if and only $K$ is not concordant to any knot in $S^3$ in any homology cobordism, or equivalently if and only $K$ does not bound any PL-disk in any homology ball with boundary $Y.$ 

The first nontrival class $(Y,K)\in \CZhat/\CZ$ is found by Levine \cite{nonsur}, building on Akbulut's work \cite{Akbulut}. Hom-Levine-Lidman \cite{homologyconcordance} prove that $\CZhat/ \CZ$ is infinitely-generated and admits a $\Z$-subgroup. Using the infinite family found in \cite{hugo}, Dai-Hom-Stoffregen-Truong \cite{Homoconcor} show  that  $\CZhat/\CZ$ admits a $\Z^\infty$-summand. 

% Note that  the manifolds  used in \cite{nonsur} and in the infinitely-generating family in \cite{homologyconcordance} are each the boundary of a contractible $4$-manifold. 
Among each infinite family of manifold-knot pairs which gives rise to the $\Z^\infty$-summand in the literature so far, the manifolds are  always of the form $M_n \#{-M_n}$, such that they bound  homology balls, but not necessarily smooth contractible manifolds. This paper strengthens the existing result by giving an infinite family of knots in the boundary of a smooth contractible manifold  that generates a $\Z^\infty$-summand in $\CZhat/\CZ.$  In fact, we give a $\Z$-count of such families.

Imposing that the homology spheres bound smooth contractible manifolds   removes the homotopic obstruction for the knots to bound PL-disks, and therefore makes it a more interesting and more difficult question. 
\begin{figure}
\labellist
 \pinlabel {$L_n$}  at -10 152
  \pinlabel {$1$}  at 445 210
  \pinlabel {$0$}  at 440 80
 \pinlabel {{$n$}}  at 128 156
  \pinlabel {$n+1$}  at 315 158
\endlabellist
\includegraphics[scale=0.5]{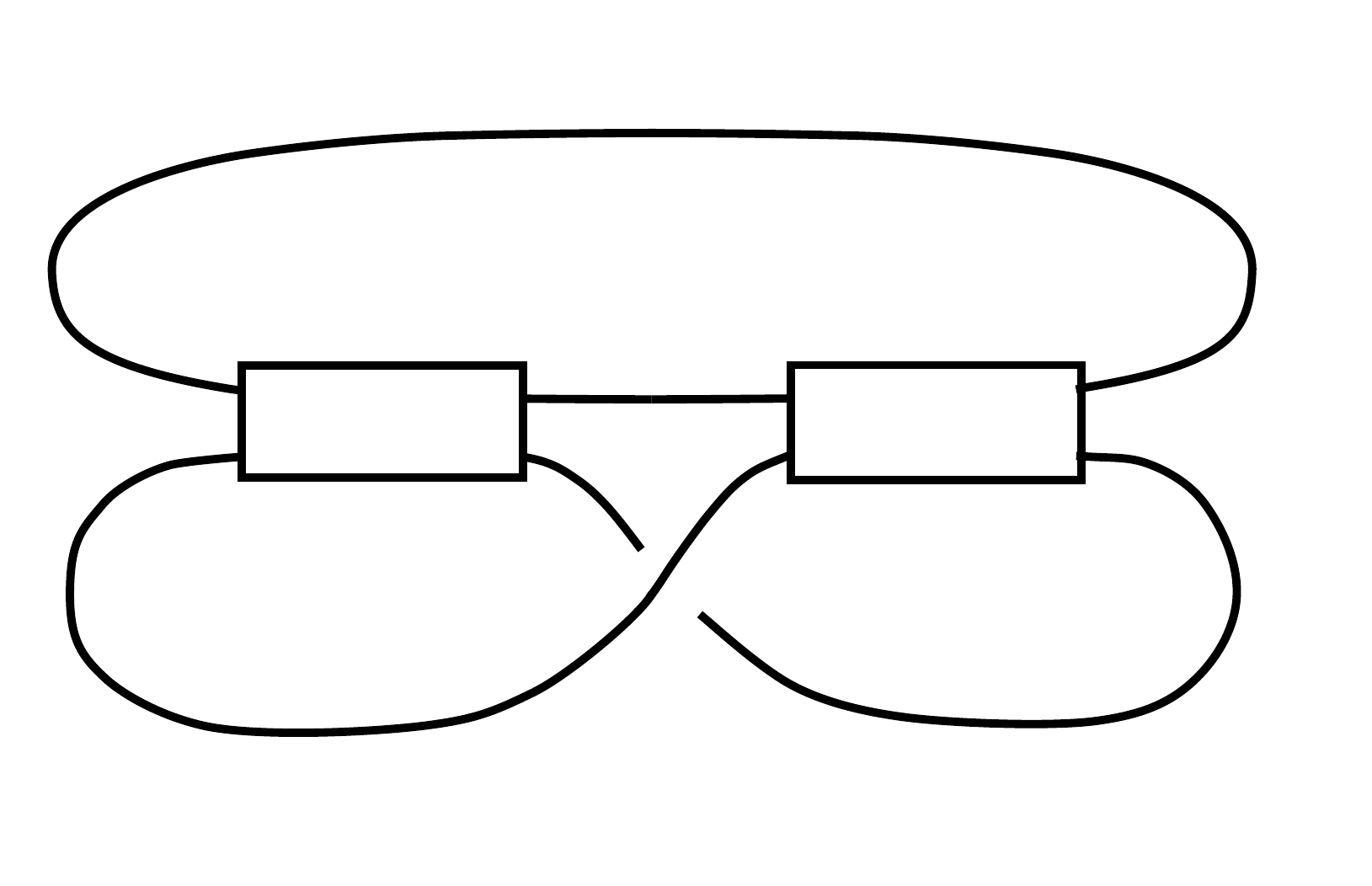}
\caption{The two bridge link $L_n$ whose $(1,0)$-framed surgery yields the integer homology sphere $Y_n$ that bounds a contractible manifold. The number in the box indicates the number of full-twists.  }
\label{fig:twobridge}
\end{figure}

Consider the Mazur type manifold  $Y_n$   with $n\geq 1$ (see \cite{mazur}) as depicted in Figure \ref{fig:twobridge}, obtained from a $(1,0)$-framed surgery on a two-bridge link $L_n$.   From a standard argument that switches the $0$-framed two handle to a dotted circle, we see that $Y_n$ bounds a contractible $4$-manifold. (Both components of $L_n$ are unknotted and they intersect algebraically once.) Let $K_n \subset S^3$ be the  knot obtained from $L_n$ by blowing down the $+1$-framed unknot component. It follows that $S^{3}_{-1}(K_n) = Y_n$ bounds a contractible $4$-manifold. 

Let $\mu^{(2n)}_{m,1}$ be the image of the $(m,1)$-cable of the meridian  in the $-1$-surgery on $K_{2n}$. 

\begin{theorem} \label{thm:main1}
    The family $\{(Y_{2n},\mu^{(2n)}_{m,1})\}_{n>0,m>1}$ generates a $\Z^{\Z\oplus\Z}$ summand in $\CZhat/\CZ$. In particular, for each fixed $n>0$, $\{(Y_{2n},\mu^{(2n)}_{m,1})\}_{m>1}$ generates a $\Z^\Z$ summand in $\CZhat/\CZ$.
\end{theorem}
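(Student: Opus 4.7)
The plan is to follow the framework of Dai--Hom--Stoffregen--Truong \cite{Homoconcor}: pass from $\CZhat/\CZ$ to a group of almost $\iota_K$-complexes modulo local equivalence, and use the family of homomorphisms $\phi_j$ defined there. The theorem then reduces to computing the almost $\iota_K$-complex of each pair $(Y_{2n}, \mu^{(2n)}_{m,1})$ and exhibiting a matrix of $\phi_j$-values, indexed by the pairs $(n,m)$, whose triangular structure yields a $\Z^{\Z\oplus\Z}$ summand.

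I would proceed in four steps. First, I would use the Ozsv\'ath--Szab\'o--Hales bordered algorithm (the paper's main technical tool, which classifies $\HFKa$ for the family $K_{2n}$) to write down $\CFKa(S^3, K_{2n})$ explicitly as a filtered chain complex whose shape depends transparently on $n$. Second, since $\mu^{(2n)}_{m,1}$ is the image in $S^3_{-1}(K_{2n}) = Y_{2n}$ of the $(m,1)$-cable of the meridian, I would apply the mapping cone / large-surgery formula together with a Hedden-style cabling analysis, in the spirit of the computations in \cite{hugo}, to extract $\CFKa(Y_{2n}, \mu^{(2n)}_{m,1})$ from the data of Step~1. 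The output should decompose into standard summands (boxes, staircases, and towers) whose parameters depend controllably on $n$ and $m$. Third, I would compute the conjugation involution $\iota_K$ on the resulting complex; the natural $\Z/2$ symmetry of the two-bridge link $L_n$, combined with the equivariance of the mapping cone under conjugation, allows one to track $\iota_K$ from the input of Step~1 to the output of Step~2.

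Fourth, with the almost $\iota_K$-complexes in hand, I would evaluate the $\phi_j$-invariants and arrange the resulting values into a matrix indexed by $(n,m)$. The aim is to show this matrix has a triangular structure, with respect to a natural lexicographic order on $(n,m)$, with nonzero diagonal entries. A standard linear-algebra and splitting argument then produces surjections $\CZhat/\CZ \to \Z$ that split off the subgroup generated by the family, yielding the desired $\Z^{\Z\oplus\Z}$ summand. For the second conclusion (fixed $n$), one restricts to a single row.

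The main obstacle is the combination of Steps~3 and~4: simultaneously controlling the involution on the large-surgery complex across the entire two-parameter family, and verifying enough independence in the $\phi_j$-values to separate distinct $n$ as well as distinct $m$. For a single $n$, independence among the $\mu^{(2n)}_{m,1}$ as $m$ varies should follow in spirit from the one-parameter argument of \cite{Homoconcor}, but upgrading to joint independence across $n$ requires the explicit classification of $\CFKa(K_{2n})$ that the paper provides, together with a careful analysis of how the $\phi_j$-values scale with $n$. This is where the paper's detailed Heegaard Floer computations become essential rather than auxiliary.
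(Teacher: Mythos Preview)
Your overall architecture---compute the knot Floer complex of $K_{2n}$, feed it into a surgery/cabling machine to get the complex of $\mu^{(2n)}_{m,1}$, then evaluate a family of concordance homomorphisms and exhibit a triangular matrix---matches the paper's. But you have misidentified the key invariant, and this sends you down a much harder road. The paper does \emph{not} pass through almost $\iota_K$-complexes or the $\phi_j$ of \cite{Homoconcor}; instead it uses the two-index homomorphisms $\varphi_{i,j}$ of \cite{Moreconcor}, which factor through local equivalence of knotlike complexes and require no involutive data whatsoever. This makes your Step~3 (tracking $\iota_K$ through the mapping cone) entirely unnecessary. That step is also the one you flag as the main obstacle, and rightly so: controlling $\iota_K$ on a two-parameter family of cables in surgery is a genuine difficulty, and your appeal to the $\Z/2$ symmetry of $L_n$ is not a concrete plan. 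The paper simply sidesteps it.

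For Step~2, the paper uses a specific tool---the filtered mapping cone formula of \cite{zhou2022filtered}---which directly outputs $\CFKi(S^3_{-1}(K),\mu_{m,1})$ as a doubly-filtered complex from $\CFKi(S^3,K)$, rather than an ad hoc cabling-plus-large-surgery argument. Because $\varphi_{i,j}$ depends only on the local equivalence class, the paper can replace $K_{2n}$ by $T_{2,4n+1}$ (this is where Hales's computation enters, as your Step~1 correctly anticipates), carry out the mapping cone computation for the torus knot, reduce to a standard complex over the ring $\X$, and read off $\varphi_{k,m+k-1}=1$ with vanishing for larger indices. The two indices of $\varphi_{i,j}$ are exactly what give the triangular structure across the two-parameter family; with the single-index $\phi_j$ you would need an additional mechanism to separate the $n$-direction from the $m$-direction.
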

As abelian groups, $\Z^{\Z\oplus\Z}$ is of course isomorphic to $\Z^{\Z}$.
Here by a $\Z^{\Z\oplus\Z}$ summand we would like to emphasize the existence of a (natural) two-parameter family of linear independent, surjective homomorphisms from $\CZhat/\CZ$ to $\Z$. In this case the two parameters are given by $n$ and $m$. (See Lemma \ref{le:varphij2k} for the homomorphisms.)
 Note that the $\Z^\Z$ summand in \cite{hugo,homologyconcordance}  is generated by knots living in infinitely many different manifolds, while here each family of knots $\{\mu^{(2n)}_{m,1}\}_{m>1}$ lives in the same three manifold $Y_{2n}$, which is the boundary of a smooth contractible $4$-manifold.
 
Theorem \ref{thm:main1} is the direct consequence of the two following theorems. Denoting by $C_k$  the complex isomorphic to $\CFKi(S^3,T_{2,2k+1}).$ First, using the filtered mapping cone formula \cite{zhou2022filtered}, and with the help of the concordance homomorphisms defined in \cite{Homoconcor}, we show the following:
\begin{restatable}{theorem}{zinfty} \label{thm:zinfty}
Suppose a family of knots $\{J_{2k}\}_{k>0}$ satisfies that  $\CFKi(S^3,J_{2k}) \cong C_{2k} \oplus A$, where $H_*(A) = 0$.  
 Then the family $\{(S^3_{-1}(J_{2k}),\mu^{(2k)}_{m,1})\}_{k>0,m>1}$ generates a $\Z^{\Z\oplus \Z}$ summand in $\CZhat/\CZ$, where  $\mu^{(2k)}_{m,1}$ is the image of the $(m,1)$-cable of the meridian  in  the $-1$-surgery on $J_{2k}$.  In particular, for each fixed $k>0,$ $\{(S^3_{-1}(J_{2k}),\mu^{(2k)}_{m,1})\}_{m>1}$ generates a $\Z^{\Z}$ summand in $\CZhat/\CZ$.  
\end{restatable}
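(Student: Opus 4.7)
The plan is to reduce the theorem to an explicit $\CFKi$ computation for the torus knot case and then apply a two-parameter grid of concordance homomorphisms. First, since $H_*(A)=0$, the acyclic summand $A$ should contribute nothing to any of the homomorphisms we will use. Consequently, applying the filtered mapping cone formula of \cite{zhou2022filtered} to $\CFKi(S^3, J_{2k}) \cong C_{2k} \oplus A$ should produce a complex for the pair $(S^3_{-1}(J_{2k}), \mu^{(2k)}_{m,1})$ that is filtered chain-homotopy equivalent, up to an acyclic summand, to the analogous complex for $(S^3_{-1}(T_{2,4k+1}), \mu^{(2k)}_{m,1})$. Establishing this behavior of the mapping cone under splitting off acyclic summands is the first step; I expect it follows from the functoriality of the mapping cone construction in $\CFKi$ up to chain homotopy.

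Next, I would carry out the explicit mapping cone computation for the staircase $C_{2k}$. Since $C_{2k}$ is a concrete staircase with $4k+1$ generators, the filtered mapping cone for $\mu^{(2k)}_{m,1}$ can be written in closed form: the $(m,1)$-cable of the meridian corresponds to a specific translate of the natural knot-like filtration inside the mapping cone. The outcome should be a staircase-like complex whose shape depends jointly on the parameters $k$ and $m$, with the width of the steps controlled by $k$ and the overall position/height controlled by $m$.

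Then I would invoke the concordance homomorphisms $\varphi_j$ of Dai--Hom--Stoffregen--Truong \cite{Homoconcor}, as packaged in Lemma \ref{le:varphij2k}. The key point is to exhibit a re-indexing $j=j(k,m)$ such that the matrix
\[
M_{(k,m),(k',m')} \;=\; \varphi_{j(k',m')}\bigl(S^3_{-1}(J_{2k}),\, \mu^{(2k)}_{m,1}\bigr)
\]
is upper triangular with nonzero diagonal entries in the lexicographic order on pairs $(k,m)$. Since each $\varphi_j$ vanishes on $\CZ$ by construction, this yields a splitting surjection $\CZhat/\CZ \twoheadrightarrow \Z^{\Z\oplus\Z}$ realized by the given family, which is exactly the desired conclusion; specializing to fixed $k$ immediately gives the $\Z^\Z$ summand statement.

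The main obstacle will be the third step: proving triangularity of the two-parameter matrix. This requires carefully reading off the values of each $\varphi_j$ from the staircase description produced in the second step, and verifying that as $m$ grows a specific $\varphi_{j(k,m)}$ first becomes nonzero, while as $k$ grows the relevant index $j$ shifts in a controlled way. Once these values are tabulated, triangularity---and hence linear independence plus the desired splitting---follows by standard linear algebra in a locally finite two-index setting.
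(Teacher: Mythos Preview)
Your proposal is correct and follows essentially the same strategy as the paper: reduce to $T_{2,4k+1}$ via local equivalence, compute the filtered mapping cone explicitly (this is Proposition \ref{prop:varphi}), and read off the triangularity from Lemma \ref{le:varphij2k} with the choice $j(k,m)=(k,m+k-1)$. One minor correction: the homomorphisms used are the two-indexed $\varphi_{i,j}$ of \cite{Moreconcor} rather than the single-indexed $\varphi_j$ of \cite{Homoconcor}, and their vanishing on $\CZ$ for first index $i>0$ is a theorem from that reference rather than something true ``by construction.''
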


Next, the knot Floer complex of the knot family $K_n$ can be explicitly computed as follows.
Ozsv\'{a}th-Szab\'{o}  gives a description in \cite[Section 6.2]{OSknot} for  a genus-one doubly pointed Heegaard diagram of any knot that results from blowing down the $\pm1$-surgery on one component of a two-bridge link. The family of knots $K_n$ is studied carefully in Jonathan Hales's thesis \cite[Section 3]{jhales}. Building on Ozsv\'{a}th-Szab\'{o}'s description, Hales develops  a simple algothrim that outputs a $(1,1)$ diagram of all the knots that arise from blowing down the $\pm1$-surgery on one component of a two-bridge link.  We review the details of this algorithm in Section \ref{sec:blow}.  In particular, he proves the following theorem. 
\begin{restatable}[\cite{jhales}]{theorem}{staircase} \label{thm:staircase}
For $n>0,$ $\CFKi(S^3,K_n) \cong C_n \oplus A$, where $H_*(A)=0$.
\end{restatable}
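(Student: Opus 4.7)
The plan is to prove Theorem \ref{thm:staircase} by explicitly computing $\CFKi(S^3,K_n)$ from a genus-one doubly-pointed Heegaard diagram, using the algorithm attributed to Ozsv\'{a}th--Szab\'{o} and Hales. Since $K_n$ is obtained by blowing down one component of the two-bridge link $L_n$, it is a $(1,1)$-knot, so its knot Floer chain complex can be read off combinatorially from a torus diagram $(T^2,\alpha,\beta,w,z)$.

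First, I would run Hales's algorithm on the link $L_n$ in Figure \ref{fig:twobridge} to write down a concrete $(1,1)$-diagram for $K_n$. This produces two curves $\alpha,\beta$ on $T^2$ whose intersection points form the generators of $\CFKa(S^3,K_n)$, together with the basepoints $w,z$. The number of intersection points grows linearly in $n$, and the combinatorial shape of the diagram is governed by a simple sequence of twists determined by the box label $n$ in Figure \ref{fig:twobridge}.

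Next, I would lift the diagram to the universal cover $\R^2$ of the torus and enumerate the embedded bigons between lifts of generators. For each bigon, the Maslov index contribution is the number of lifts of $w$ enclosed, while the Alexander grading difference is $n_z-n_w$. From this data one obtains the full bifiltered complex $\CFKi(S^3,K_n)$ as a free module over $\F[U,U^{-1}]$ with one generator per intersection point and differential supplied by the bigon counts. The key qualitative feature I expect to find is that the nonzero differentials organize themselves into one zigzag "staircase" pattern connecting $2n+1$ distinguished generators (matching $\CFKi(S^3,T_{2,2n+1})$), together with several length-one "cancelling pairs" of differentials whose associated subcomplexes are acyclic.

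Finally, I would perform a sequence of changes of basis to split the complex as $C_n \oplus A$ with $H_*(A)=0$. Concretely, each cancelling pair contributes a direct summand isomorphic to $\F[U,U^{-1}]\langle x\rangle \xrightarrow{\cong} \F[U,U^{-1}]\langle y\rangle$ with no further differentials in or out after the basis change, which can be iteratively stripped off by the standard reduction lemma for filtered complexes. What remains is, by inspection, the staircase $C_n$. The main obstacle is bookkeeping: carrying out the enumeration of bigons and the iterative cancellations uniformly in $n$ without errors. I would address this either by induction on $n$, where the diagrams for $K_n$ and $K_{n+1}$ differ by a controlled local modification (adding one full twist in the box), with base case $K_1=T_{2,3}$; or by identifying a pattern in Hales's diagram that directly exhibits the staircase generators and a manifest acyclic complement. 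The verifications that $C_n$ indeed agrees with $\CFKi(S^3,T_{2,2k+1})$ for $k=n$ and that all remaining pieces cancel are then largely a matter of matching Alexander gradings against the known staircase shape for $T_{2,2n+1}$.
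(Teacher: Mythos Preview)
Your high-level strategy coincides with the paper's: run Hales's algorithm on $K_n=K^+_{n,n+1}=K^+([2n,1,2(n+1)])$, lift the resulting $(1,1)$ diagram to the universal cover, read off the bigons, and split the complex as a staircase plus an acyclic complement via iterated filtered changes of basis. Where your proposal goes astray is in the expected shape of that complement, and this is not just bookkeeping.

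The acyclic part is \emph{not} a collection of length-one cancelling pairs. In the paper's computation (Section~\ref{subsec:class1}) the $(1,1)$ diagram decomposes into blocks $H(s)$ for $1\le s\le 2n$, and each block contributes a summand isomorphic to the complex $D_s$ of Definition~\ref{def:ds}: a staircase $C_s$ together with one extra generator $y$ receiving arrows from every even-indexed corner $x_{2i}$, with filtration shifts $\dij(x_{2i},y)=(i,s-i)$. These $D_s$ are acyclic, but as \emph{filtered} complexes they are not direct sums of rank-two pieces; the arrows into $y$ have arbitrary length. The change of basis that splits off $H(s)$ (Lemma~\ref{le:hs}) works by comparing $H(s)$ against the adjacent block $H(s+1)$ inside the ambient complex, and verifying it is filtered uses the specific filtration shifts in \eqref{eq:filshift1}; it is not the standard cancellation-of-a-filtered-isomorphism move you describe. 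The final answer (the $k=1$ case of Proposition~\ref{prop:class1}) is
\[
\CFKi(S^3,K_n)\;\cong\; C_n\oplus D_n\oplus 2\Bigl(\bigoplus_{s=1}^{n-1}D_s\Bigr).
\]
In particular your proposed base case fails: already $\CFKi(S^3,K_1)\cong C_1\oplus D_1$ has rank~$7$, so $K_1$ is not $T_{2,3}$ at the level of $\CFKi$, and an induction anchored there cannot start. If you carry out the algorithm honestly you will be forced to discover the $D_s$'s and the block-by-block splitting; the plan as written would not survive contact with the actual diagram.
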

  See the $k=1$ case of Proposition \ref{prop:class1}   for a more precise statement of Theorem \ref{thm:staircase}. It is clear that Theorem \ref{thm:main1} is the consequence of Theorem \ref{thm:zinfty} and Theorem \ref{thm:staircase}.

\begin{remark}
The family of knots $K_n$ we consider here is the same family used to prove the infinite generation in \cite{homologyconcordance},  where the $d$-invariants  of $1/p$ surgeries on $\mu_{1,1}^{(n)}$ are computed, using the fact that for certain $p$, the resulting manifolds are Brieskorn spheres. Note that this is also similar to the approach employed in \cite{akbulutkarakurt}. Interestingly, note that with the current computation we do not recover the infinite generation result with $\mu_{1,1}^{(n)}$ proved in \cite{homologyconcordance}, as it turns out that $\CFKi(S^3_{-1}(K_n), \mu_{1,1}^{(n)}) $ is locally equivalent to the trivial complex. In order to recover their result, one needs to in addition  consider the nontrivial flip map over $\CFKi(S^3_{-1}(K_n), \mu_{1,1}^{(n)}) $.
% The Mazur type manifolds $Y_n$ are also considered in \cite{gradedroot}. Both of these papers use the fact that some specific surgery on $L_n$ bounds an AR-plumbing manifold and use the $d$-invariant of the plumbing  to extract partial information on $K_n$ or $Y_n$. It turns out  that in this case the frontal attack on the knot Floer homology of $K_n$ is more effective, from which one can obtain full Heegaard Floer infomation on $Y_n$ by the mapping cone formula \cite{integer}. For example, see \cite[Theorem 1.03]{jhales}.    
\end{remark}

\subsection{Blowing down two bridge links}
We also explore the algorithm due to Ozsv\'{a}th, Szab\'{o}  and Hales. For a rational tangle $[a_1,\cdots,a_\l]$ whose closure is a two-component link, we can always arrange such that $\l$ is odd and each $a_i$ is even when $i$ is odd. (See  Lemma \ref{le:eventwist} and the discussion before that.) Denote by $K^{\pm}([a_1,\cdots,a_\l])$ the knot obtained from blowing down the $\pm 1$-framed upper component. See Figure \ref{fig:intro_rational}. It turns out that we can completely determine the knot Floer homology of $K^{\pm}([a_1,\cdots,a_\l])$ when $\l\leq 3.$ We include this classification result in the paper, with the expectation that these families of knots will generate more future applications.

\begin{figure}[htb!]
\centering
\labellist
  \pinlabel {\small{$a_3$}}  at 130 166
 \pinlabel {\small{$-a_2$}}  at 220 120
  \pinlabel {\small{$a_1$}}  at 320 162
   \pinlabel {$\pm 1$}  at 40 270
\endlabellist
\includegraphics[scale=0.5]{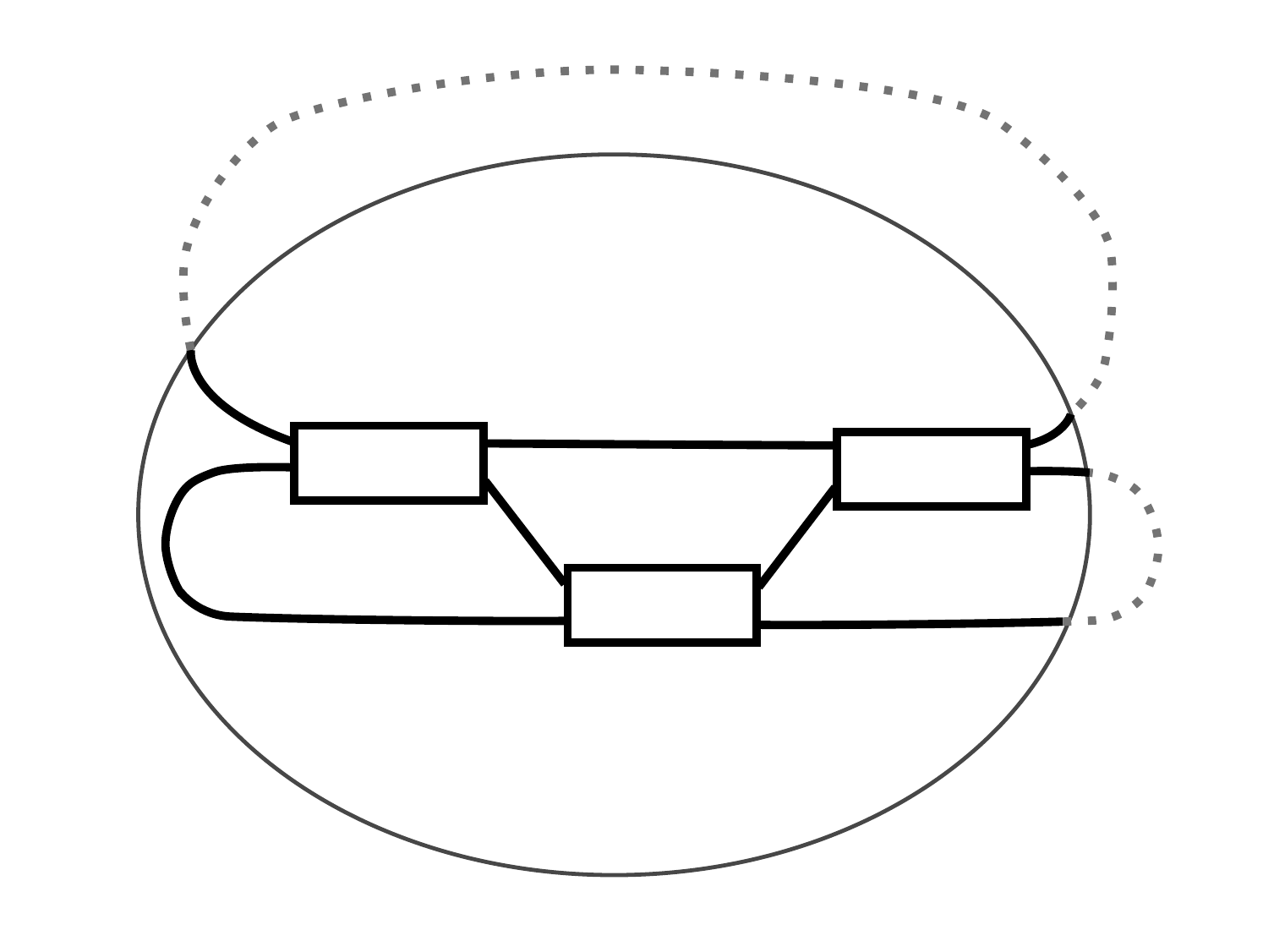}
\caption{Blowing down one component of the closure of a rational tangle $[a_1,a_2,a_3]$ with $a_1$ and $a_3$ even yields the knot $K^{\pm}[a_1,a_2,a_3]$. The numbers in the boxes indicate the number of half-twists. }
\label{fig:intro_rational}
\end{figure}

The computation of the invariants has always been a main theme in Heegaard Floer homology. Despite significant development of the theory,  there are still only a limited number of knot families whose knot Floer complexes can be explicitly determined:  L-space knots \cite{OSlens},  thin knots \cite{OSalter} \cite{inathin},   $(1,1)$ almost L-space knots \cite{binns20231} (all determined by the Alexander polynomial) and certain cables of specific knots (using say \cite{hanselman2019cabling}, where the computations are already difficult). 

\begin{theorem}\label{thm:class}
    The knot Floer complex of any knot $K^{\pm}([a_1,\cdots,a_\l])$ with $\l=1,3$ and $a_i$  even for odd $i$ is classified, including Maslov gradings and filtration levels. 
\end{theorem}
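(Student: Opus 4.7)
The plan is to apply the algorithm of Ozsv\'{a}th--Szab\'{o} and Hales (reviewed in Section \ref{sec:blow}) to produce explicit $(1,1)$-diagrams for the knots $K^{\pm}([a_1,\ldots,a_\l])$ in each of the two cases $\l=1$ and $\l=3$, and then read off the knot Floer complex combinatorially. The key simplification of a $(1,1)$-diagram is that every holomorphic disk is a bigon on the torus, so identifying generators, differentials, Maslov gradings, and Alexander filtrations becomes a finite combinatorial problem controlled by the sizes of $|a_1|,|a_2|,|a_3|$.

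The $\l=1$ case serves both as a warm-up and as the content of Proposition \ref{prop:class1} (whose $k=1$ case is Theorem \ref{thm:staircase}). The rational tangle $[a_1]$ with $a_1$ even produces a diagram whose $\beta$-curve winds roughly $a_1/2$ times across the meridional $\alpha$-curve, so one can directly enumerate the intersection points, find the bigons between adjacent ones, and conclude that $\CFKi$ is a single staircase plus an acyclic summand. I would do this case first in order to fix sign conventions and the placement of the two basepoints $w,z$ dictated by the algorithm.

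For $\l=3$, the diagram has a staircase shape determined by $(a_1,a_2,a_3)$: there are on the order of $|a_2|$ rungs, each of which carries intersection points coming from the outer twist regions $a_1$ and $a_3$. I would first enumerate generators according to their position along these rungs, next list the bigons between adjacent generators and classify them by how many times they cross $w$ versus $z$, and then extract the differential from the bigons crossing neither basepoint while reading off the relative Alexander filtration from $A(\mathbf{x})-A(\mathbf{y}) = n_z(\phi)-n_w(\phi)$ and the relative Maslov grading from the standard formula. Cancelling acyclic summands by change of basis then yields $\CFKi \cong C_f \oplus A$ with $H_*(A)=0$ for an explicit staircase $C_f$ whose steps depend polynomially on the $a_i$. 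Absolute gradings are pinned down by matching the top nontrivial Alexander filtration against the Seifert genus, or equivalently $\tau$, of $K^{\pm}([a_1,a_2,a_3])$.

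The main obstacle is the sign and parity case analysis for $\l=3$. The shape of the staircase on the torus, and hence which bigons cross which basepoints, depends on the signs of $a_1,a_2,a_3$ and on the sign of the framing $\pm1$ that is blown down, producing several sub-cases. Each sub-case is handled by the same combinatorial recipe, but packaging the answer into a uniform formula for the Maslov gradings, Alexander filtrations, and staircase type in terms of the $a_i$ is where most of the bookkeeping lives. Once that is done, the acyclic cancellation is a routine linear algebra exercise and Theorem \ref{thm:class} follows.
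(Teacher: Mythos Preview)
Your general framework---use the Ozsv\'{a}th--Szab\'{o}--Hales algorithm to produce a $(1,1)$ diagram and read off the complex combinatorially---is the same as the paper's, but two substantive points separate your sketch from a working proof.

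First, the structural claim is wrong. You assert that after cancelling acyclic summands one always obtains $\CFKi \cong C_f \oplus A$ with $C_f$ a single staircase. This fails in several of the $\l=3$ cases. For $K^{+}([2n,1,2(n+k)])$ with $k\geq 3$ the non-acyclic summand is the complex $C_{n,k}$ of Definition~\ref{def:cnk}, which consists of $k-1$ staircases glued together along generators $y_1,\dots,y_{k-2}$ and is not itself a staircase; similarly $K^{-}([2n,1,2(n+k)])$ with $k\geq 2$ yields $C'_{n,k}$ (Definition~\ref{def:c'nk}). Even the acyclic piece $A$ is not featureless: it decomposes as $\bigoplus_s D_s$ for the specific complexes $D_s$ of Definition~\ref{def:ds}, and the classification in the paper records this decomposition along with the filtration level supporting each $D_s$. (A side remark: the $\l=1$ case is Proposition~\ref{prop:length1}, not Proposition~\ref{prop:class1}; the latter is one of the $\l=3$ subcases.)

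Second, you are missing the key reduction that makes the $\l=3$ case tractable. You propose to attack general $a_2$ directly, noting that the diagram has ``on the order of $|a_2|$ rungs.'' The paper instead observes that changing $a_2$ by $\pm 2$ is a local full Dehn twist $\rho_\pm$ along a short arc $\gamma$ (Section~\ref{subsubsec:fullDehntwists}), and that on the chain level each such twist simply adds a copy of the box $D_1$ at a \emph{marked} generator (Lemmas~\ref{le:sigma2+} and~\ref{le:sigma2-}, Proposition~\ref{prop:sigma2}). This reduces the problem to $a_2\in\{-1,0,1\}$ together with the data of a marked basis, after which symmetries (Lemma~\ref{le:relation}) and mirroring cut the work down to the five subcases of Proposition~\ref{prop:summary}. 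Without this reduction your ``sign and parity case analysis'' would have to track a number of strands growing linearly in $|a_2|$, and it is not clear your sketch would arrive at the correct non-staircase answers above. A further shortcut you do not mention: in the mixed-sign subcases $K^{\pm}([2n_1,1,-2n_2])$ the $(1,1)$ diagram is coherent in the sense of \cite{GLV}, so these are L-space knots and the complex is determined by the list of arrow lengths (Propositions~\ref{prop:class3} and~\ref{prop:class4}).
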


The  knot Floer chain homotopy types arise from our examples are novel. For an example, consider the complexes $D_s$ in Definition \ref{def:ds}, the complexes $C_{n,k}$ in Definition \ref{def:cnk} and the complexes $C'_{n,k}$ in Definition \ref{def:c'nk}. Recall $C_n \cong \CFKi(S^3,T_{2,2k+1})$ and let $C_0$ be the complex generated by a single element. We will show that the knot Floer complexes of various knots  $K^{\pm}[n,1,n+k]$ with $n>0$ and $k\geq 0$ consists of direct summands of the above complexes. (For detailed statements see Proposition \ref{prop:class1} and \ref{prop:class2}.) As mentioned earlier, the case for $K^+$ when $k=1$ has already been done by Jonathan Hales. We include it into a framework suitable for a slightly larger family.

Moreover, this classification involves interesting techniques.
% Recall that for a $(1,1)$ knot, the standard treatment is to lift the genus one doubly-pointed Heegaard diagram to the universal cover. After choosing lifts $\aa$ of $\alpha$ and $\bb$ of $\beta$ in the universal cover, one can readily read off the full knot Floer complex from the diagram.
% We can  determine the knot Floer complex corresponding to the sequence $[a_1,b,a_3]$ when $b= \pm 1$ and $0$. However, for a general sequence $[a_1,b,a_3]$, the $(1,1)$ diagram quickly becomes complicated. To reduce to the base case, 
For a general sequence $[a_1,b,a_3]$, we observe that changing $b$ by $2$ amounts to performing a full Dehn-twist along an arc $\gamma$. See Figure \ref{fig:halfDehntwist}. This is a local transformation in a neighbourhood of $\gamma$, so we can opt to perform it in the very end. It turns out that each full Dehn-twist amounts to adding a box summand at the ``closest'' generator near $\gamma$. See Figure \ref{fig:sigma2} for the effect on $\CFKi$ and see Definition \ref{def:marked} for a precise definition of the ``closest'' generator. It follows that in order to recover any general  sequence $[a_1,b,a_3]$, we need only consider the case $b= \pm 1$ and $0$ with a \emph{marked basis}. See Proposition \ref{prop:sigma2} for more details.

This method allows us to reduce the full classification to a handful of cases. For an instance, in Section \ref{subsec:class0} we will utilize it to show that the knot Floer complex corresponding to the sequence $[a_1,b,a_3]$ when $b$ is even is isomorphic to $\CFKi(S^3,T_{n,n+1})$ for certain $n$ with a certain number of box summands added to some marked generators.

\begin{remark}
  When $\l >3$, even though a closed formula seems unlikely, using similar methods as in this paper it is still very much  feasible to determine the full knot Floer complex for singular examples of $K^{\pm}([a_1,\cdots, a_\l])$. Moreover, if one opts to compute partial invariants such as the local equivalence class or the $\tau$ invariant,  then I believe the computation should be practical for a larger family of knots, and I expect the result to be interesting as well.   
\end{remark}

\subsection*{Organization} We perform the filtered mapping cone computations  and  prove Theorem \ref{thm:zinfty} in Section \ref{sec:cables}. We review the algorithm by Ozsv\'{a}th, Szab\'{o} and Hales in Section \ref{sec:blow} and prove our classification result in Section \ref{sec:classify}. More detailedly, in Section \ref{subsec:len1}, we prove the length-one case; in Section \ref{subsec:general}, we discuss the length-three case in general and prove some helpful lemmas; in Section \ref{subsec:class0} through \ref{subsec:class4}, we deal with the remaining cases with length-three.
\subsection*{Acknowledgement}
I would like to thank my advisor Jen Hom for encouragement and support. I am grateful to Adam Levine, Fraser Binns and Tye Lidman for helpful conversations. I also want to express my appreciation for the amazing thesis \cite{jhales} by Jonathan Hales. 

\section{Knot Floer homology of the cables of the knot meridian} \label{sec:cables}
In this section we prove Theorem \ref{thm:zinfty}. We will make use of the concordance invariants $\varphi_{i,j}$ defined in \cite{Moreconcor}. We refer the reader to the original paper for the detailed definitions. See \cite[Section 3.2]{Moreconcor} for a worked out example. See \cite[Example 8.1]{zhou2022filtered} for yet another example. Since $\varphi_{i,j}$ factors through the group of knotlike complexes over local equivalence \cite[Proposition 4.42]{Moreconcor}, instead of considering $-1$-surgery on the knot $J_{2k}$ with $\CFKi(S^3,J_{2k}) \cong \CFKi(S^3,T_{4k+1}) \oplus A $, where $H_*(A) = 0,$ it suffices to consider the $-1$-surgery on $T_{4k+1}.$

 Theorem \ref{thm:zinfty} follows from a computational result, Proposition \ref{prop:varphi}. The computational tool is the filtered mapping cone formula \cite{zhou2022filtered}, which combines the work \cite{HL} and \cite{Truong} to give a description of the knot Floer complex of the $(m,1)$-cable of the meridian in the image of a  surgery along a knot. We include a brief review of this filtered mapping cone formula in the following subsection.
\subsection{Preliminaries on the filtered mapping cone formula}
Let $K\subset S^3$ be a knot with genus equal to $g.$ For a given positive integer $m$, let $\mu_{m,1}$ denote the $(m,1)$-cable of the meridian of $K$ in the $-1$-surgery on $K.$
According to \cite[Theorem 1.9]{zhou2022filtered}, the knot Floer complex  $\CFKi(S^3_{-1}(K),\mu_{m,1})$ is filtered chain homotopy equivalent to the doubly-filtered chain complex $X^\infty_m(K)$, defined to be the mapping cone of
\begin{align} \label{eq:x_infinity}
     \bigoplus^{g+m-1}_{s=-g+1}A^\infty_s \xrightarrow{v^\infty_s+h^\infty_s} \bigoplus^{g+m-1}_{s=-g}B^\infty_s,
\end{align}
where each $A^\infty_s$ and $B^\infty_s$ are isomorphic to $\CFKi(S^3,K)$. The map $v^\infty_s\co A_s \to B_s $ is the identity and the map $h^\infty_s\co A_s \to B_{s-1} $ is the reflection along $i=j$ precomposed with $U^s$.  

Let $\II$ and  $\JJ$  be the double filtrations 
on the filtered mapping cone complex $X^\infty_m(K)$. We have
\begin{align}
\intertext{for $[\x,i,j] \in A_{s}$,}
\label{eq: filtration_s3_1}
 \II([\x,i,j]) &= \max\{i,j-s\} \\
 \label{eq: filtration_s3_2}
 \JJ([\x,i,j]) &= \max\{i-m,j-s\} - ms + \frac{m(m+1)}{2} \\
\intertext{and for $[\x,i,j] \in B_{s}$,}
 \label{eq: filtration_s3_3}
 \II([\x,i,j]) &= i \\
  \label{eq: filtration_s3_4}
 \JJ([\x,i,j]) &= i-m  - ms + \frac{m(m+1)}{2}
\end{align}
 It is straightforward to check that for $s<-g+1$, the map $h_s$ induces an isomorphism on the homology; for $s>g+m-1,$ 
the map $v_s(K)$ induces an isomorphism on the homology,  which justifies the truncation of the mapping cone.

The general strategy for computation involves finding a \emph{reduced} basis for $X^\infty_m(K),$ where every term in the differential strictly lowers at least one of the filtrations. This can be achieved through a cancellation process (see for example \cite[Proposition 11.57]{Bordered}) as follows: suppose $\partial x_i = y_i$ $+ $ lower filtration terms, where the double filtration of $y_i$ is the same as $x_i$, then the subcomplex of  $X^\infty_m(K)$ generated by all such $\{x_i,  \partial x_i\}$ is acyclic, and $X^\infty_m(K)$ quotient by this complex is reduced. Alternatively,  one can view the above process as a change of basis, that splits off acyclic summands which individually lie entirely in one double-filtration level.  

\subsection{Computation}
Recall that it suffices to consider the $(m,1)$-cable of the knot meridian in the $-1$-surgery on $T_{4k+1}.$
For $k \geq 1,$ the complex $C_{2k} = \CFKi(S^3,T_{2,4k+1})$ is generated by $a_i$ for $i=1,\cdots,2k$ with coordinate $(0,-2k+2i-1)$ and $b_i$ for $i=1,\cdots,2k+1$ with coordinate $(0,-2k+2i-2)$. The Maslov grading is supported in $b_{2k+1}$ and the differentials are given by
 \[
 \partial a_i = Ub_{i+1} + b_i, \quad \text{for} \hspace{0.5em} i=1,\cdots,2k.
 \]

Via the isomorphism with $\CFKi(S^3,T_{2,4k+1})$, denote the generators in  $A_s$  by $a^{(s)}_i$ for $i=1,\cdots, 2k$ and $b^{(s)}_i$ for $i=1,\cdots, 2k+1$ and the generators in $B_s$ by $a'^{(s)}_i$ and $b'^{(s)}_i$, where $s=-2k+1,\cdots, 2k+m-1.$ 
The differential on the mapping cone is given by
 \begin{align*}
     \partial a^{(s)}_i &= b^{(s)}_i + Ub^{(s)}_{i+1} + a'^{(s)}_i + U^{s+2k+1-2i} (a'^{(s-1)}_{2k-i+1}) \\
     \partial b^{(s)}_i &= b'^{(s)}_i + U^{s+2k+2-2i} (b'^{(s-1)}_{2k-i+2}) \\
     \partial a'^{(s)}_i &= b'^{(s)}_i + Ub'^{(s)}_{i+1}.
 \end{align*}
 We first aim to choose a reduced basis for the complex of $X^\infty_{m} (T_{2,4k+1})$. As a subcomplex of $X^\infty_{m} (T_{2,4k+1})$, each $B_s$ is one dimensional. Indeed, quotienting out $\{a'^{(s)}_i, \d a'^{(s)}_i\}_{1\leq i \leq 2k}$ leaves us with sole generator $b'^{(s)}_{2k+1}$; denote $\beta_s = b'^{(s)}_{2k+1}$ after the change of basis.

 Next, for the complex $A_s$, observe that $a^{(s)}_i$ and $Ub^{(s)}_{i+1}$ are in the same coordinate if $-2k + 2i - 1 \geq s$, and similarly $a^{(s)}_i$ and $b^{(s)}_{i}$ are in the same coordinate if $-2k+2i-1 \leq s-m$.  So we may quotient out 
 $\{a^{(s)}_i, \d a^{(s)}_i \}$ for $i \geq k + \frac{s+1}{2}$ and $i \leq k + \frac{s -m +1}{2}.$
 We have obtained a reduced model of $X^\infty_{m} (T_{2,4k+1})$. As a notational shorthand, let us define
\begin{align*}
    f(m,s) &= \frac{m(m+1)}{2} -ms\\
     \it &=    \min\{ k + \ceil{\frac{s-1}{2}}  , 2k\} \\
     \ib &= \max\{ k +1 + \ceil{\frac{s-m}{2} }, 1 \} 
\end{align*}
such that in the reduced model each $A_s$ is generated by $a^{(s)}_i$ with $\ib \leq i \leq \it$ and  $b^{(s)}_i$ with $\ib \leq i \leq \it +1.$
 The induced differentials on the chain complex are 
 \begin{align}
    \label{eq: diff1} \partial a^{(s)}_i &= b^{(s)}_i + Ub^{(s)}_{i+1} \\
    \label{eq: diff2}  \partial b^{(s)}_i &= U^{2k+1-i}\beta_s + U^{2k+1-i+s} \beta_{s-1}
 \intertext{and the filtration level of the generators are}
     \label{eq: filt1}  \JJ(a^{(s)}_i) &= f(m,s) -2k -s -1 + 2i \\
     \label{eq: filt2} \JJ(b^{(s)}_i) &= f(m,s) -2k -s -2 + 2i  \\
     \label{eq: filt3} \JJ(\beta_s) &= f(m,s+1)\\
     \label{eq: filt4}  \II(a^{(s)}_i) &= \II(b^{(s)}_i) = \II(\beta_s) = 0
 \end{align}
 for $-2k+1 \leq s \leq 2k + m -1$ and suitable $i$ as discussed above.
 For the purpose of computing concordance invariants, we will show the mapping cone can be further truncated. Let
 \begin{align*}
    X^\infty_{m} (T_{2,4k+1}) \langle \ell \rangle = \bigoplus^{\ell}_{s=-\ell+m}A^\infty_s \xrightarrow{v^\infty_s+h^\infty_s} \bigoplus^{\ell}_{s=-\ell+m-1}B^\infty_s.
\end{align*}
Note that under this notation  $X^\infty_{m} (T_{2,4k+1}) = X^\infty_{m} (T_{2,4k+1}) \langle 2k+m-1 \rangle$. 
 \begin{lemma}\label{le: localequi}
 For any $k \geq 1$ and $m\geq 2$, the filtered complex $X^\infty_{m} (T_{2,4k+1}) \langle 2k+m-1 \rangle$  is isomorphic to $ X^\infty_n (T_{2,4k+1}) \langle m-1 \rangle \oplus D $ up to a filtered change of basis, where $H_*(D)=0$.
 \end{lemma}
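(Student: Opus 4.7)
The plan is to construct a filtered change of basis on $X^\infty_{m}(T_{2,4k+1})\langle 2k+m-1\rangle$ that displays it as a direct sum $X^\infty_{m}(T_{2,4k+1})\langle m-1\rangle\oplus D$, where $D$ is spanned by the generators in $A_s$ for $s\in[-2k+1,0]\cup[m,2k+m-1]$ and in $B_s$ for $s\in[-2k,-1]\cup[m,2k+m-1]$, organized into acyclic pairs. The cardinalities of the $A$- and $B$-parts of $D$ agree, consistent with $H_*(D)=0$.

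My plan is to peel off acyclic summands inductively from the outside inward, starting at $s=2k+m-1$. At this outermost index the reduced $A_s$ is one-dimensional, generated by $b^{(s)}_{2k+1}$, with $\partial b^{(s)}_{2k+1}=\beta_s+U^s\beta_{s-1}$. Using (\ref{eq: filt1})--(\ref{eq: filt4}), one verifies that $\JJ(U^s\beta_{s-1})-\JJ(\beta_s)=m-s\leq 0$ and $\II(U^s\beta_{s-1})=-s<0=\II(\beta_s)$ for all $s\geq m$, so the substitution $\beta'_s:=\beta_s+U^s\beta_{s-1}$ is a filtered change of basis. In the new basis $\partial b^{(s)}_{2k+1}=\beta'_s$ exactly. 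Substituting $\beta_s=\beta'_s+U^s\beta_{s-1}$ into the remaining formulas of (\ref{eq: diff2}) at level $s$ yields $\partial b^{(s)}_p=U^{2k+1-p}\beta'_s$, because the two $\beta_{s-1}$ contributions cancel in $\mathbb{F}_2$. A further rebasing $\tilde b^{(s)}_p:=b^{(s)}_p+U^{2k+1-p}b^{(s)}_{2k+1}$ then annihilates the arrows to $\beta'_s$, and compatibility with (\ref{eq: diff1}) follows from parallel $U$-power bookkeeping, so that $\{b^{(s)}_{2k+1},\beta'_s\}$ splits off as an acyclic summand. I would iterate this procedure from $s=2k+m-1$ downward to peel off the upper half of $D$, and apply a symmetric argument using the horizontal map $h_s\co A_s\to B_{s-1}$ in place of $v_s$ to handle the lower range $s\in[-2k+1,0]$.

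The main obstacle lies in the intermediate range $m\leq s<2k$, where $\it<2k$ and so the convenient top generator $b^{(s)}_{2k+1}$ is not present in the reduced model. In this regime the splitting must be effected by a compound change of basis using several staircase generators of $A_s$ simultaneously, and each such combination must be checked against (\ref{eq: filt1})--(\ref{eq: filt4}) to confirm it is genuinely filtered. A parallel subtlety is posed by the crossing arrows $h_m\co A_m\to B_{m-1}$ and $h_1\co A_1\to B_0$, which run between the eventual acyclic summand $D$ and the truncated complex $X^\infty_{m}(T_{2,4k+1})\langle m-1\rangle$; the corresponding corrections must be absorbed into basis modifications of $B_{m-1}$ and $B_0$ that lie in strictly lower filtration, so as not to alter the filtered type of the inner truncation.
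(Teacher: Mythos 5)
Your opening substitution, $\beta_s\mapsto \beta_s+U^s\beta_{s-1}$ at the top (together with its mirror $\beta_{-\ell+m-1}\mapsto\beta_{-\ell+m-1}+U^{\ell-m}\beta_{-\ell+m}$ at the bottom) and the filtration check $\JJ(U^s\beta_{s-1})-\JJ(\beta_s)=m-s\le 0$, is exactly the change of basis used in the paper, and it already finishes the job at \emph{every} stage: by \eqref{eq: diff2} one has $\partial b^{(s)}_i=U^{2k+1-i}\beta_s+U^{2k+1-i+s}\beta_{s-1}=U^{2k+1-i}\bigl(\beta_s+U^s\beta_{s-1}\bigr)$ for every $i$ in the reduced range, the ratio of the two coefficients being $U^s$ independently of $i$. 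Hence after this one substitution all arrows out of $A_\ell$ land on the new $\beta_\ell$, nothing else in the truncation $\langle\ell\rangle$ hits $A_\ell$ or the new $\beta_\ell$, and the entire block $A_\ell\oplus B_\ell$ (resp.\ $A_{-\ell+m}\oplus B_{-\ell+m-1}$) splits off; it is acyclic because over $\F[U,U^{-1}]$ multiplication by a $U$-power is invertible, so no generator mapping onto $\beta_\ell$ with unit coefficient is needed. This works verbatim whether or not $b^{(\ell)}_{2k+1}$ survives in the reduced model. Consequently the ``main obstacle'' you describe for $m\le s<2k$ does not exist, and this is where your write-up has a genuine gap: for that range---which is most of the induction when $m<2k$---you only assert that some unspecified ``compound change of basis using several staircase generators'' should exist, without constructing it or verifying it is filtered, so the proof is incomplete as written.

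Two further corrections. The auxiliary rebasing $\tilde b^{(s)}_p=b^{(s)}_p+U^{2k+1-p}b^{(s)}_{2k+1}$ is not a filtered change of basis: by \eqref{eq: filt2}, $\JJ\bigl(U^{2k+1-p}b^{(s)}_{2k+1}\bigr)-\JJ\bigl(b^{(s)}_p\bigr)=2k+1-p>0$ for $p\le 2k$, so the correction term lies in strictly higher $\JJ$-filtration; fortunately this step is unnecessary, since you do not need to isolate the pair $\{b^{(s)}_{2k+1},\beta'_s\}$---the whole block is already a summand after the first substitution. Finally, the crossing arrows at the last stage are $h_m\co A_m\to B_{m-1}$ and $v_0\co A_0\to B_0$ (the map $h_1\co A_1\to B_0$ is internal to $X^\infty_{m}(T_{2,4k+1})\langle m-1\rangle$), and they are absorbed by modifying the discarded generators $\beta_m$ and $\beta_{-1}$, exactly as in every other step; no modification of $B_{m-1}$ or $B_0$ is required, so the inner truncation is left untouched.
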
 
\begin{proof}
     We will show that for $m \leq \ell \leq 2k + m -1,$ the complex $X^\infty_{m} (T_{2,4k+1}) \langle \ell \rangle$ is isomorphic to $X^\infty_{m} (T_{2,4k+1}) \langle \ell -1 \rangle \oplus D'$ up to a change of basis, where $H_*(D')=0$. For every such $\ell$, in $X^\infty_{m} (T_{2,4k+1}) \langle \ell \rangle$ perform a change of basis
     \begin{align*}
         \beta_{-\ell + m -1} &\longmapsto \beta_{-\ell + m -1} + U^{\ell -m} \beta_{-\ell + m }\\
         \beta_{\ell} &\longmapsto \beta_{\ell} + U^\ell \beta_{\ell-1}.
\intertext{
      By \eqref{eq: diff2}, as a result the complexes given by
}
     A^\infty_{-\ell+m} &\xrightarrow[]{h_{-\ell+m}}B^\infty_{-\ell+m-1}\\
      A^\infty_{\ell} &\xrightarrow[]{\hspace{0.3em} v_{\ell}\hspace{0.3em}}B^\infty_{\ell}
\end{align*}
 both become summands under the new basis. The change of basis is clearly filtered with respect to the $\II$--filtration. For the $\JJ$--filtration, we compute
 \begin{align*}
     \JJ(\beta_{-\ell + m -1}) - \JJ(U^{\ell -m} \beta_{-\ell + m }) &=  \JJ(\beta_{-\ell + m -1}) - ( \JJ( \beta_{-\ell + m }) - \ell + m )\\
     &=f(m,-\ell + m) - f(m,-\ell + m + 1) + \ell -m\\
     &= \ell \geq 0\\
     \JJ( \beta_{\ell}) - \JJ( U^\ell \beta_{\ell-1}) & =  \JJ( \beta_{\ell}) - \JJ( \beta_{\ell-1}) + \ell\\
     & = f(m, \ell + 1) -  f(m, \ell) + \ell\\
     & =  \ell -m \geq 0.
 \end{align*}
 Therefore the change of basis is filtered.
\end{proof}
We also record the filtration shift between the generators in the complex. 
\begin{definition}\label{def:gradingshift}
    Suppose $U^c y$ is a nontrivial term in $\partial x$, where $c$ is some constant and $x, y$ are both generators. Define 
\begin{equation*}
    \Delta_{\II,\JJ}(x,y) = (\II,\JJ)(x) - (\II,\JJ)(U^c y)
\end{equation*}
and similarly define $\Delta_\II$ and $\Delta_\JJ$.
\end{definition} 
Clearly, $\Delta_{\II,\JJ}(a^{(s)}_i, b^{(s)}_i) = (0, 1)$
   and $\Delta_{\II,\JJ}(a^{(s)}_i, b^{(s)}_{i+1}) = (1, 0)$ for each $\ib \leq i \leq \it$. We also have
\begin{lemma} \label{le: filtrationshift}
For $-2k+1 \leq s \leq 2k + m -1$ and $\ib \leq i \leq \it + 1,$
\begin{align}
    \label{eq:filtrationshift1} \Delta_{\II,\JJ}(b^{(s)}_i,\beta_{s-1}) &= (2k +1 -i +s, i-1)\\
    \label{eq:filtrationshift2} \Delta_{\II,\JJ}(b^{(s)}_i,\beta_{s}) &= (2k +1 -i , m-s+i-1)
\end{align}
\end{lemma}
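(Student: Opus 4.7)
The plan is to verify both identities by a direct substitution into the explicit formulas \eqref{eq: filt1}--\eqref{eq: filt4}, together with the formula for $\partial b^{(s)}_i$ from \eqref{eq: diff2}. The only structural input needed is that the variable $U$ lowers both filtrations $\II$ and $\JJ$ by one, which is immediate from the definitions \eqref{eq: filtration_s3_1}--\eqref{eq: filtration_s3_4} since the third coordinate of $[\x,i,j]$ is absent from the filtration formulas and both $\II$ and $\JJ$ are monotone of slope one in $i$ when $\x$ is replaced by $U\x=[\x,i-1,j-1]$. Unpacking Definition \ref{def:gradingshift} for a term $U^c y$ appearing in $\partial x$ therefore gives
\begin{equation*}
    \dij(x,y) = \bigl(\II(x)-\II(y)+c,\; \JJ(x)-\JJ(y)+c\bigr).
\end{equation*}

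For \eqref{eq:filtrationshift1}, the relevant term in $\partial b^{(s)}_i$ is $U^{2k+1-i+s}\beta_{s-1}$, so I would take $c = 2k+1-i+s$. Since $\II(b^{(s)}_i) = \II(\beta_{s-1}) = 0$, the $\II$-coordinate of $\dij$ reads off immediately as $2k+1-i+s$. For the $\JJ$-coordinate, I would substitute $\JJ(b^{(s)}_i) = f(m,s) - 2k - s - 2 + 2i$ and $\JJ(\beta_{s-1}) = f(m,s)$; the $f(m,s)$ terms cancel, and after adding $c$ the result collapses to $i-1$, as desired.

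For \eqref{eq:filtrationshift2}, the relevant term is $U^{2k+1-i}\beta_s$, so $c = 2k+1-i$, giving the $\II$-coordinate $2k+1-i$ on the nose. For the $\JJ$-coordinate I would use the key identity $f(m,s+1) - f(m,s) = -m$, which follows directly from $f(m,s) = \tfrac{m(m+1)}{2} - ms$. Combining $\JJ(b^{(s)}_i) - \JJ(\beta_s) = -m - 2k - s - 2 + 2i$ with $c = 2k+1-i$ yields $m - s + i - 1$, completing the verification. There is no substantive obstacle here; the whole lemma is bookkeeping, and the main point requiring care is the sign conventions attached to the $U$-action on the two filtrations and the linear dependence of $f(m,\cdot)$ on its second argument.
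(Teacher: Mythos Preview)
Your proposal is correct and follows essentially the same approach as the paper: both are direct substitutions using \eqref{eq: diff2} for the $U$-powers and \eqref{eq: filt2}--\eqref{eq: filt4} for the filtration levels, with the only algebraic input being that $U^c$ drops both filtrations by $c$ and that $f(m,s)-f(m,s+1)=m$.
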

\begin{proof}
By \eqref{eq: diff2}, \eqref{eq: filt2},\eqref{eq: filt3} and \eqref{eq: filt4}, we compute
    \begin{align*}
         \Delta_{\II,\JJ}(b^{(s)}_i,\beta_{s-1}) &= (\II,\JJ)(b^{(s)}_i) - (\II,\JJ)(U^{2k+1-i+s}\beta_{s-1})\\
         &= (0, f(m,s) - 2k -s -2 + 2i) - (0,f(m,s)) + (2k+1-i+s,2k+1-i+s)\\
         &=(2k +1 -i +s, i-1)\\
         \Delta_{\II,\JJ}(b^{(s)}_i,\beta_{s}) &= (\II,\JJ)(b^{(s)}_i) - (\II,\JJ)(U^{2k+1-i}\beta_{s})\\
         &= (0, f(m,s) - 2k -s -2 + 2i) - (0,f(m,s+1)) + (2k+1-i,2k+1-i)\\
         &=(2k +1 -i , m-s+i-1).
    \end{align*}
\end{proof}
We have all the ingredients to calculate the concordance invariants $\varphi_{i,j}$. This will be done in two steps. First, we translate the complex $X^\infty_n (T_{2,4k+1}) \langle m-1 \rangle$ into the ring $\F[U,V].$ Then we further translate it into the ring $\X$ and perform a change of basis, resulting a standard complex, from which  the invariants $\varphi_{i,j}$ can be readily read off.
\begin{lemma}\label{le:uvcomplex}
    Over the ring $\F[U,V],$ the complex $X^\infty_n (T_{2,4k+1}) \langle m-1 \rangle$ is generated by $\{\beta_s |~ 0\leq s \leq m-1 \} \cup \{ a^{(s)}_{i} |~ 1\leq s \leq m-1, \ib \leq i \leq \it\} \cup \{ b^{(s)}_{i} | ~1\leq s \leq m-1,\ib \leq i \leq \it +1\}$
    with differentials
    \begin{align}
                 \d a^{(s)}_{i}      &= U b^{(s)}_{i+1} + Vb^{(s)}_{i} \\
        \d b^{(s)}_{i} &= U^{\Delta_\II(b^{(s)}_{i},\beta_{s-1})}V^{\Delta_\JJ(b^{(s)}_{i},\beta_{s-1})} \beta_{s-1} + U^{\Delta_\II(b^{(s)}_{i},\beta_{s})}V^{\Delta_\JJ(b^{(s)}_{i},\beta_{s})} \beta_{s}.
    \end{align}
\end{lemma}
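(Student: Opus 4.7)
The plan is to rewrite the already-reduced $\F[U]$-complex $X^\infty_m(T_{2,4k+1})\langle m-1\rangle$ over the larger ring $\F[U,V]$, where the exponents of $U$ and $V$ record the drops in the $\II$- and $\JJ$-filtrations respectively. Once the generators are identified, the differentials are read off directly from the filtration shifts already computed.

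First I would pin down the surviving generators. By the definition of $\langle \ell\rangle$ with $\ell=m-1$, we have $A_s$ present for $-\ell+m\le s\le \ell$, i.e.\ $1\le s\le m-1$, and $B_s$ for $0\le s\le m-1$. Applying the cancellations performed earlier in the section (killing $\{a'^{(s)}_i,\d a'^{(s)}_i\}$ inside each $B_s$ so that only $\beta_s=b'^{(s)}_{2k+1}$ survives, then killing $\{a^{(s)}_i,\d a^{(s)}_i\}$ in each $A_s$ for $i\ge k+\tfrac{s+1}{2}$ and for $i\le k+\tfrac{s-m+1}{2}$) leaves exactly the generators listed in the statement.

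Next I would translate each arrow. Since every generator lies at $\II=0$, every $U$-power appearing in the old $\F[U]$-differential already equals $\Delta_\II$ of that arrow; the only new ingredient in the $\F[U,V]$-version is the factor $V^{\Delta_\JJ}$. For the arrows leaving $a^{(s)}_i$, the equalities $\Delta_{\II,\JJ}(a^{(s)}_i,b^{(s)}_i)=(0,1)$ and $\Delta_{\II,\JJ}(a^{(s)}_i,b^{(s)}_{i+1})=(1,0)$ recorded right after Definition \ref{def:gradingshift} convert \eqref{eq: diff1} into $\d a^{(s)}_i=Ub^{(s)}_{i+1}+Vb^{(s)}_i$. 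For the arrows out of $b^{(s)}_i$, Lemma \ref{le: filtrationshift} supplies $\Delta_\II$ and $\Delta_\JJ$ to both $\beta_{s-1}$ and $\beta_s$; substituting these as $U$- and $V$-exponents in \eqref{eq: diff2} reproduces the displayed formula verbatim.

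No deep obstacle is expected. The main conceptual point to verify is that the substitution $U^a\mapsto U^aV^{\Delta_\JJ}$ is compatible with the chain complex structure, i.e.\ preserves $\d^2=0$ and recovers the correct bifiltration. This is automatic: the $V$-exponent attached to each arrow is determined by the $\JJ$-levels of its source and target via the formulas \eqref{eq: filt1}--\eqref{eq: filt3}, so along any composition of two arrows the $V$-exponents telescope exactly as the $U$-exponents already do in the original $\F[U]$-complex. Thus the replacement is a bona fide isomorphism of bifiltered chain complexes, giving the $\F[U,V]$-description claimed.
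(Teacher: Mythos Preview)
Your proposal is correct and follows essentially the same approach as the paper, which simply says the lemma follows trivially from \eqref{eq: diff1}, \eqref{eq: diff2} and the definition of $\Delta_\II$ and $\Delta_\JJ$. You have unpacked that one-line justification into its constituent steps---identifying the surviving generators from the truncation and reduction, then reading off the $U$- and $V$-exponents from the already-computed filtration shifts---which is exactly what the paper intends.
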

\begin{proof}
    This follows trivially from \eqref{eq: diff1}, \eqref{eq: diff2} and the definition of $\Delta_\II$ and $ \Delta_\JJ.$
\end{proof}
We are ready to prove the following proposition. Compare with \cite[Proposition 1.2]{zhou2022filtered}.
\begin{proposition}\label{prop:varphi}
\begin{align*}
    \varphi_{i,j} (X^\infty_n (T_{2,4k+1}) \langle m-1 \rangle) &= \begin{cases}
        -\sum^{s-1}_{1} (\it -\ib + 1)      &\hspace{1em}  (i,j)=(1,0)\\
        1                             &\hspace{1em}  (i,j)\in \{\Delta_{k,m}(s) ~|~1\leq s \leq m-1 \}\\
        0                     &\hspace{1em} \text{otherwise}
    \end{cases} 
    \intertext{where $\Delta_{k,m}(s)$ is given by}
    \Delta_{k,m}(s) &= \begin{cases}
        (k - \ceil{\frac{s-1}{2}}, m + k - \floor{\frac{s+1}{2}})     &\hspace{2em}  s\leq 2k\\
        (0, 2k + m -s)  &\hspace{2em}  s\geq 2k.
    \end{cases}
\end{align*}
\end{proposition}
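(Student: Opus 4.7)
The plan is to compute the invariants $\varphi_{i,j}$ by reducing $X^\infty_m(T_{2,4k+1})\langle m-1\rangle$ to a standard complex in the sense of \cite[Section 4]{Moreconcor}, and then reading off $\varphi_{i,j}$ directly from the standard form.

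First I would pass to the $\F[U,V]$-presentation of Lemma \ref{le:uvcomplex}. The generators split naturally into three roles: the spine candidates $\beta_0,\ldots,\beta_{m-1}$; the intermediate generators $b^{(s)}_i$ whose boundaries connect consecutive $\beta_{s-1}$ and $\beta_s$ with bifiltration shifts given by Lemma \ref{le: filtrationshift}; and the generators $a^{(s)}_i$ which, via $\partial a^{(s)}_i = Ub^{(s)}_{i+1}+Vb^{(s)}_i$, impose relations that can be used to cancel pairs of the $b^{(s)}_*$'s up to boundary.

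Next, for each fixed $s\in\{1,\ldots,m-1\}$, I would telescope the chain $a^{(s)}_{\ib},\ldots,a^{(s)}_{\it}$ to cancel all but the two extremal $b^{(s)}_i$'s, leaving surviving spine arrows $b^{(s)}_{\it+1}\to\beta_{s-1}$ and $b^{(s)}_{\ib}\to\beta_s$. A direct calculation using \eqref{eq:filtrationshift1}, \eqref{eq:filtrationshift2}, and the definitions of $\it,\ib$ should show that, after collecting these spine contributions, one of the two extremal weights reproduces $\Delta_{k,m}(s)$ in the two cases $s\le 2k$ and $s\ge 2k$ while the other is absorbed by the neighbouring spine arrow; the case split corresponds exactly to whether $\it$ or $\ib$ has saturated at its boundary value $2k$ or $1$. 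The remaining $a^{(s)}_i$--$b^{(s)}_i$ pairs split off either as genuinely acyclic $\F[U,V]$-summands or as ``box'' summands attached to the spine; tallying across all $s$ gives $\sum_{s=1}^{m-1}(\it-\ib+1)$ such boxes, accounting for $\varphi_{1,0} = -\sum_{s}(\it-\ib+1)$, and leaves the spine $\beta_0\to\beta_1\to\cdots\to\beta_{m-1}$ with arrow weights $\Delta_{k,m}(s)$ contributing $\varphi_{i,j}=1$ at each of those positions.

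The main obstacle is the filtered bookkeeping: I must verify that every proposed cancellation is filtered with respect to both $\II$ and $\JJ$, and that each box summand lands at exactly the ``closest'' position to the spine so that it contributes to $\varphi_{1,0}$ and not to some other $\varphi_{i,j}$. The most delicate step will be the transition at $s=2k$, where the truncation rules for $\it$ and $\ib$ switch regimes and produce the case split in the definition of $\Delta_{k,m}(s)$; here one must be careful that the ``collapsed'' extremal arrow really does become redundant rather than producing an extra spurious arrow in the standard form.
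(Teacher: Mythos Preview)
Your overall strategy---reduce to a standard complex and read off the $\varphi_{i,j}$---matches the paper's, but a key technical ingredient is missing. Standard complexes in \cite{Moreconcor} are defined over the ring $\X$, not over $\F[U,V]$, and the paper's proof crucially passes to $\X$ before performing any change of basis. Over $\F[U,V]$ the ``telescoping'' you describe cannot work: every $b^{(s)}_i$ has \emph{two} $\beta$-targets in its differential, and the relations coming from the $a^{(s)}_i$ only relate $b^{(s)}_i$ to $b^{(s)}_{i\pm 1}$, never cancelling the $\beta$-terms. The paper's point is that after mapping $U\mapsto U_B+W_{T,0}$ and $V\mapsto V_T+W_{B,0}$ each such term splits in two, and only then can the change of basis \eqref{eq: changebasis1}--\eqref{eq: changebasis2} kill the interior differentials, leaving $\partial\widetilde{b}^{(s)}_i=0$ for $\ib<i<\it+1$.

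There is also a structural misconception. The $a^{(s)}_i$--$b^{(s)}_i$ pairs do not split off as acyclic or box summands; they remain in the standard complex as the length-$(1,0)$ edges of the linear zigzag
\[
\bb_{s-1}\leftarrow \widetilde b^{(s)}_{\ib}\leftarrow a^{(s)}_{\ib}\rightarrow \widetilde b^{(s)}_{\ib+1}\leftarrow\cdots\rightarrow \widetilde b^{(s)}_{\it+1}\rightarrow \bb_s,
\]
and it is precisely these $(\it-\ib+1)$ internal $V_T$-edges that give the $-(1,0)$ entries in the standard sequence, hence the value of $\varphi_{1,0}$. Finally, you have the extremal arrows reversed: in the reduced complex it is $\widetilde b^{(s)}_{\it+1}$ that maps to $\bb_s$ (with shift $\Delta_{k,m}(s)$) and $\widetilde b^{(s)}_{\ib}$ that maps to $\bb_{s-1}$, not the other way round; nothing is ``absorbed''.
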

\begin{proof}
Continuing from Lemma \ref{le:uvcomplex}, we can further translate the complex  $X^\infty_n (T_{2,4k+1}) \langle m-1 \rangle$ into the ring 
\begin{align*}
    \X&=\frac{\F[U_B,\{W_{B,i}\}_{i\in \Z},V_T,\{W_{T,i}\}_{i\in \Z}]}{(U_B V_T, U_B W_{B,i}-W_{B,i+1},V_T W_{T,i}-W_{T,i+1})}
    \intertext{using the maps}
     U&\xmapsto[\hspace{1em}]{} U_B + W_{T,0}\hspace{5em}
    V\xmapsto[\hspace{1em}]{} V_T + W_{B,0}.
\end{align*}
The differentials now reads 
\begin{align*}
    \d a^{(s)}_{i}      &= (U_B + W_{T,0}) b^{(s)}_{i+1} + ( W_{B,0} + V_T )b^{(s)}_{i} \\
     \d b^{(s)}_{i} &= (U_{B}^{\Delta_\II(b^{(s)}_{i},\beta_{s-1})}W_{B,0}^{\Delta_\JJ(b^{(s)}_{i},\beta_{s-1})} + V_T^{\Delta_\JJ(b^{(s)}_{i},\beta_{s-1})}W_{B,0}^{\Delta_\II(b^{(s)}_{i},\beta_{s-1})})  \beta_{s-1} \\
     &+ (U_B^{\Delta_\II(b^{(s)}_{i},\beta_{s})}W_{B,0}^{\Delta_\JJ(b^{(s)}_{i},\beta_{s})} + V_T^{\Delta_\JJ(b^{(s)}_{i},\beta_{s})}W_{T,0}^{\Delta_\II(b^{(s)}_{i},\beta_{s})} ) \beta_{s}.
\end{align*}
Note that each term in the previous coefficient in $\F[U,V]$ becomes two terms in the above expression, one in the ideal $(U_B)$ and one in the  the ideal $(V_T)$.
We perform the change of basis
\begin{align}
   \label{eq: changebasis1} \widetilde{b}^{(s)}_i &= \begin{cases}
      b^{(s)}_{i} + U^{-1}_B W_{B,0} ( b^{(s)}_{i-1} ) &\quad \text{if} \hspace{0.5em} i=i^{(t)}_{m,s} + 1\\
     b^{(s)}_{i} + U^{-1}_B W_{B,0} (b^{(s)}_{i-1}) + V^{-1}_T W_{T,0}  (b^{(s)}_{i+1}) &\quad \text{if} \hspace{0.5em}  i^{(b)}_{m,s}<i<i^{(t)}_{m,s} + 1\\
     b^{(s)}_{i} + V^{-1}_T W_{T,0} (b^{(s)}_{i+1})  &\quad \text{if} \hspace{0.5em} i=i^{(b)}_{m,s},
     \end{cases}\\
    \label{eq: changebasis2} \bb_s &= \begin{cases}
     \beta_s + V_T^{-s}W_{T,0}^{m-s}\beta_{s-1}    & \hspace{1.3em} \text{if}  \hspace{0.5em} s=0\\
     \beta_s + V_T^{-s}W_{T,0}^{m-s}\beta_{s-1}   + U_B^{s-m}W_{B,0}^{s}\beta_{s+1} & \hspace{1.3em} \text{if}  \hspace{0.5em} 1\leq s \leq m-2 \\
      \beta_s + U_B^{s-m}W_{B,0}^{s}\beta_{s+1}    & \hspace{1.3em} \text{if}  \hspace{0.5em} s=m-1
    \end{cases}
    \intertext{which simplifies the differentials to}
   \label{eq: finaldiff1} \d a^{(s)}_{i}      &= U_B  \widetilde{b}^{(s)}_{i+1} +  V_T  \widetilde{b}^{(s)}_i \\
    \label{eq: finaldiff2} \d  \widetilde{b}^{(s)}_i &=  \begin{cases}
   V_T^{\Delta_\JJ(b^{(s)}_{i},\beta_{s})}W_{T,0}^{\Delta_\II(b^{(s)}_{i},\beta_{s})} \bb_{s}, &\hspace{0.5em} i=\it + 1 \\
     U_B^{\Delta_\II(b^{(s)}_{i},\beta_{s-1})}W_{B,0}^{\Delta_\JJ(b^{(s)}_{i},\beta_{s-1})}  \bb_{s-1}, &\hspace{0.5em} i=\ib\\
    0,   &\text{ otherwise}.
         \end{cases}
\end{align}
Note that $\Delta_{\II,\JJ}(b^{(s)}_i,\beta_{s-1}) - \Delta_{\II,\JJ}(b^{(s)}_i,\beta_{s}) = (s,s-m)$ by \eqref{eq:filtrationshift1} and \eqref{eq:filtrationshift2}  which justifies the change of basis \eqref{eq: changebasis2}. We have obtained a standard complex \cite[Definition 5.1]{Moreconcor}, where the sequence of vector length of each $V$--edges (starting from $\bb_0$) is as follows 
\begin{align*}
\Big( \underbrace{\overbrace{-(1,0),\cdots,-(1,0)}^{(\it -\ib + 1) \text{ copies}}, \Delta_{\II,\JJ}(b^{(s)}_{\it + 1}, \beta_s),}_{1\leq s \leq m-1} \cdots \Big). 
\end{align*}
To finish the proof, it suffices to show $\Delta_{k,m}(s)  = \Delta_{\II,\JJ}(b^{(s)}_{\it + 1}, \beta_s)$ for $1\leq s \leq m-1.$ Recall that $\it =    \min\{ k + \ceil{\frac{s-1}{2}}  , 2k\}.$ Therefore
\begin{align*}
    \it + 1 &= \begin{cases}
        k + \ceil{\frac{s+1}{2}}  &\hspace{3em}  s \leq 2k\\
        2k+1 &\hspace{3em}  s \geq 2k.
    \end{cases}
    \intertext{By \eqref{eq:filtrationshift2}, we compute}
    \Delta_{\II,\JJ}(b^{(s)}_{\it + 1}, \beta_s) &= (2k+1 -k -  \ceil{\frac{s+1}{2}}, m-s+k + \ceil{\frac{s+1}{2}} -1  )\\
    &=(k - \ceil{\frac{s-1}{2}}, m + k - \floor{\frac{s+1}{2}})   \hspace{3em}  \text{if } s\leq 2k;\\
     \Delta_{\II,\JJ}(b^{(s)}_{\it + 1}, \beta_s) &= (2k+1 -(2k+1), m-s+2k +1 -1) \\
     &= (0,2k + m -s)    \hspace{10em}  \text{if } s\geq 2k.
\end{align*}   
\end{proof}
 
\begin{lemma} \label{le:varphij2k}
    Suppose $\CFKi(S^3,J_{2k}) \cong \CFKi(S^3,T_{4k+1}) \oplus A$ for $k>0,$ where $H_*(A) = 0.$ Let $\mu^{(2k)}_{m,1}$ denote the image of the $(m,1)$-cable of the median in the $-1$-surgery on $J_{2k}.$ Then we have
    \begin{align*}
        \varphi_{i,j}(S^3_{-1}(J_{2k}),\mu^{(2k)}_{m,1}) = \begin{cases}
            1  &\hspace{3em} (i,j) = (k, m+k-1)\\
            0   &\hspace{3em} i>k \quad \text{or } \quad j>m+k-1.
        \end{cases}
    \end{align*}
\end{lemma}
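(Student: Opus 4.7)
The plan is to deduce the lemma directly from Proposition \ref{prop:varphi} together with the fact that $\varphi_{i,j}$ is a local-equivalence invariant. First, since $\CFKi(S^3,J_{2k}) \cong \CFKi(S^3,T_{2,4k+1}) \oplus A$ with $H_*(A)=0$, and since the filtered mapping cone formula \cite{zhou2022filtered} takes local equivalences to local equivalences, the surgery complex $\CFKi(S^3_{-1}(J_{2k}),\mu^{(2k)}_{m,1})$ is locally equivalent to $X^\infty_m(T_{2,4k+1})$. By Lemma \ref{le: localequi}, the latter is in turn locally equivalent to the truncated model $X^\infty_m(T_{2,4k+1})\langle m-1\rangle$. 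Since $\varphi_{i,j}$ factors through local equivalence by \cite[Proposition 4.42]{Moreconcor}, we conclude
\begin{equation*}
\varphi_{i,j}(S^3_{-1}(J_{2k}),\mu^{(2k)}_{m,1}) = \varphi_{i,j}\bigl(X^\infty_m(T_{2,4k+1})\langle m-1\rangle\bigr).
\end{equation*}

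The remaining task is purely combinatorial: read off from Proposition \ref{prop:varphi} the maximum of $i$ and the maximum of $j$ appearing in the support of $\varphi_{i,j}$. The support (other than the $(1,0)$ entry, which has both coordinates small when $k\geq 1$ and $m\geq 2$) is the set $\{\Delta_{k,m}(s)\mid 1\leq s\leq m-1\}$. I would first observe that evaluating the formula at $s=1$ gives $\Delta_{k,m}(1) = (k-\lceil 0\rceil, m+k-\lfloor 1\rfloor) = (k, m+k-1)$, producing $\varphi_{k,m+k-1}=1$.

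Next I would verify that this is a simultaneous maximum. For $s\leq 2k$, the first coordinate $k-\lceil (s-1)/2\rceil$ is at most $k$, with equality only at $s=1$; for $s\geq 2k$ the first coordinate is $0 \leq k$. So no $(i,j)$ with $i>k$ lies in the support. For $s\leq 2k$, the second coordinate $m+k-\lfloor (s+1)/2\rfloor$ is bounded above by $m+k-1$, again with equality only at $s=1$; for $s\geq 2k$ the second coordinate $2k+m-s$ is at most $m\leq m+k-1$. Thus no $(i,j)$ with $j>m+k-1$ lies in the support. Combined with the trivial observation that the $(1,0)$ value is irrelevant to the two cases claimed (as $k\geq 1$ and $m+k-1\geq 0$), this yields the stated values of $\varphi_{i,j}$.

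The only subtle point — and I expect this to be the only mild obstacle — is the bookkeeping of the ceilings and floors across the parity cases of $s$ and the junction at $s=2k$. Since these only enter through the explicit closed form for $\Delta_{k,m}(s)$, it amounts to a direct case check rather than a structural difficulty.
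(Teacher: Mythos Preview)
Your proposal is correct and follows essentially the same approach as the paper: reduce to Proposition \ref{prop:varphi} via the filtered mapping cone formula and Lemma \ref{le: localequi}, evaluate $\Delta_{k,m}(1)=(k,m+k-1)$, and verify that no other $\Delta_{k,m}(s)$ has a larger first or second coordinate. The paper phrases the last step slightly more succinctly by observing that as $s$ increases by one, exactly one coordinate of $\Delta_{k,m}(s)$ decreases by one, but this is equivalent to your direct bound check.
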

\begin{proof}
    Since $\CFKi(S^3,J_{2k}) \cong \CFKi(S^3,T_{4k+1}) \oplus A$, by the filtered mapping cone formula   $\CFKi(S^3_{-1}(J_{2k}),\mu^{(2k)}_{m,1}) \cong \CFKi(S^3_{-1}(T_{4k+1}),\mu'^{(2k)}_{m,1}) \oplus A'$. By Lemma \ref{le: localequi}
    \[
    \varphi_{i,j}(S^3_{-1}(J_{2k}),\mu^{(2k)}_{m,1}) =   \varphi_{i,j} (X^\infty_n (T_{2,4k+1}) \langle m-1 \rangle).
    \]
    According to Proposition \ref{prop:varphi}, the value of $\varphi_{i,j}$ is determined by the sequence $\Delta_{k,m}(s)$ for $1\leq s \leq m-1.$
    Note that $\Delta_{k,m}(1) = (k, m+k-1),$ and as $s$ increases by $1$ either the first or the second entry of $\Delta_{k,m}(s)$ decreases by $1$. The result immediately follows.
\end{proof}
\begin{proof}[Proof of Theorem \ref{thm:zinfty}]
Lemma \ref{le:varphij2k} shows that the $\varphi_{k,m+k-1}$ are linear independent, and
    the homomorphism
    \begin{align*}
    \bigoplus_{k>0,m>1} \varphi_{k,m+k-1} \colon \CZhat/\CZ &\longrightarrow \bigoplus_{k>0,m>1} \Z
   \intertext{
    is surjective. In particular, for each fixed $k>0$ the homomorphism 
    }
    \bigoplus_{m>1} \varphi_{k,m+k-1} \colon \CZhat/\CZ &\longrightarrow \bigoplus_{m>1} \Z
     \end{align*}
    is surjective.
\end{proof}

\section{Blowing down two-bridge links}\label{sec:blow}
We now turn to the algorithm due to Ozsv\'{a}th, Szab\'{o}  and Hales.
Let $R=R_1\cup R_2$ be a two-bridge link, where $R_1$ and $R_2$ are the two link components. One can view $R$ as two arcs $A_1$ and $A_2$ on a fixed $S^2$ with two trivial over-arcs $C_1$ and $C_2$ each intersecting  $S^2$  transversely at end points, such that $R_i = A_i \cup C_i.$ Since $R_2$ is unknotted,  $\epsilon$-framed surgery on  $R_2$ with $\epsilon = \pm 1$  recovers $S^3$. Let $\Sigma$ be the genus one surface obtained from $S^2$ by attaching an one-handle along the arc $C_2,$ and let $\mu_2$ be the meridian of the one-handle. Ozsv\'{a}th-Szab\'{o} gives a description for a genus-one doubly pointed Heegaard diagram for the knot $R_1$ in $S^3_\epsilon(R_2) =S^3$  as follows.
\begin{proposition}{\cite[Proposition 6.3]{OSknot}}
For $\epsilon = \pm 1, $ let $\alpha = C_2 \cup ($trivial arc in $S^2)$, $\beta = A_2 \cup C_2 \cup  k\mu_2$, where $k$ is chosen such that $\alpha \cdot \beta = \epsilon,$ and put $z$ and $w$ basepoints at the two end points of $C_1.$ Then $(\Sigma, \alpha, \beta, z, w)$ represents $(S^3_{\epsilon}(R_2),R_1).$ 
\end{proposition}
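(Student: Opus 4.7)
The plan is to identify $(\Sigma, \alpha, \beta)$ as a Heegaard diagram for $S^3_\epsilon(R_2)$ obtained from a standard genus-one splitting of $S^3$ by $\partial N(R_2)$, modified so that $\beta$ represents the meridian of the surgery solid torus. The basepoints at the endpoints of $C_1$ on $\Sigma$ then encode the knot $R_1$ in the resulting manifold.

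First I would verify that $\Sigma$ is a Heegaard surface for $S^3$. Since $R_2$ is unknotted, a regular neighborhood $N(R_2)$ and its complement are both solid tori. Attaching the 1-handle to $S^2$ along the over-arc $C_2$ produces $\Sigma$ as the boundary of the solid torus $H_\beta$ consisting of the ball below $S^2$ (containing $A_2$) together with the 1-handle whose core is $C_2$. The core of $H_\beta$ is isotopic to $A_2 \cup C_2 = R_2$, so $H_\beta$ is isotopic to $N(R_2)$, and the complementary handlebody $H_\alpha$ is isotopic to $S^3 \setminus N(R_2)$.

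Next, identify the attaching curves. The curve $\alpha = C_2 \cup (\text{trivial arc in }S^2)$, pushed to $\Sigma$, traverses the 1-handle once longitudinally and closes up in $S^2$; because the trivial arc does not wind around the handle's attaching regions, $\alpha$ represents the $0$-framed longitude $\lambda$ of the unknot $R_2$ on $\partial N(R_2)$. In particular $\alpha$ bounds a once-punctured Seifert disk in $H_\alpha$, so it is a meridian of $H_\alpha$. For $(\Sigma, \alpha, \beta)$ to present $S^3_\epsilon(R_2)$, the curve $\beta$ must bound a disk in the surgery solid torus $V$ glued in during $\epsilon$-surgery, equivalently must represent the homology class $\lambda + \epsilon \mu$ on $\partial N(R_2)$, where $\mu$ is the meridian of $R_2$ (isotopic on $\Sigma$ to the handle meridian $\mu_2$). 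Now $H_1(\Sigma)$ is generated by $\alpha$ and $\mu_2$, and $A_2 \cup C_2$ is homologous on $\Sigma$ to $\alpha + n\mu_2$ for some integer $n$ measuring the winding of $A_2$ against the chosen trivial arc; hence $\beta = A_2 \cup C_2 \cup k\mu_2$ is homologous to $\alpha + (n+k)\mu_2$. The intersection condition $\alpha \cdot \beta = \epsilon$ (with $\alpha \cdot \mu_2 = \pm 1$) forces $n + k = \pm \epsilon$, whence $\beta$ is precisely the required surgery meridian $\lambda + \epsilon \mu$ up to sign. This shows $(\Sigma, \alpha, \beta)$ is a Heegaard diagram for $S^3_\epsilon(R_2)$.

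For the basepoints: the over-arc $C_1$ meets $S^2$ transversely at its two endpoints, which lie on $\Sigma$ since the 1-handle is attached only near $\partial C_2$. Placing $z, w$ at these two points, the arc $C_1$ lies properly embedded in $H_\alpha$ connecting $z$ to $w$, while $A_1$ lies in $S^2 \cap \Sigma$ and can be pushed into $H_\beta$ to give an arc from $w$ to $z$; together they recover $R_1 = A_1 \cup C_1$. This is exactly the data of a doubly-pointed genus-one Heegaard diagram for $(S^3_\epsilon(R_2), R_1)$. The main obstacle will be the careful bookkeeping of framings and orientations when verifying that the $k$ determined by $\alpha \cdot \beta = \epsilon$ gives exactly $\lambda + \epsilon \mu$ rather than a nearby slope such as $\lambda + (\epsilon \pm 2)\mu$; fixing consistent sign conventions on $\alpha$, $\beta$, $\mu_2$, and the intersection pairing on $\Sigma$ from the outset should resolve this.
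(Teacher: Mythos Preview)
Your proposal is correct and follows essentially the same approach as the paper's proof, just with far more detail. The paper's two-sentence argument simply asserts that $(\Sigma,\alpha,\beta)$ represents $S^3_\epsilon(R_2)$ and then observes that connecting $z$ to $w$ in the complement of $\alpha$ traces out $C_1$ while connecting $w$ to $z$ in the complement of $\beta$ traces out $A_1$; you unpack both steps by explicitly identifying the two handlebodies with $N(R_2)$ and its complement, checking that $\alpha$ is the Seifert longitude and that the intersection condition pins $\beta$ to the $\epsilon$-surgery slope, and then matching the arcs to $C_1$ and $A_1$ exactly as the paper does.

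One small remark on your closing concern: since $\beta$ is of the form $\lambda + m\mu$ on $\partial N(R_2)$, the pairing $\alpha\cdot\beta = \lambda\cdot(\lambda + m\mu) = \pm m$ determines $m$ up to a global sign coming from the orientation convention on $\Sigma$, so there is no danger of landing on $\lambda + (\epsilon\pm 2)\mu$; the only bookkeeping is the single overall sign, which is fixed once you orient $\Sigma$ consistently.
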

\begin{proof}
Clearly $(\Sigma, \alpha, \beta)$ represents $S^3_{\epsilon}(R_2).$ Connecting $z$ to $w$ in the complement of $\alpha$ traces out $C_1$ while connecting $w$ to $z$ in the complement of $\beta$ traces out $A_1$.
\end{proof}
In particular, it follows that $(S^3_{\epsilon}(R_2),R_1)$ is a $(1,1)$ knot. Next, view the two-bridge link $R$ as the numerator closure of a rational tangle $p/q$, with $p$ even and $q$ odd. We follow the convention in \cite{rationaltangle}. Every rational tangle can be obtained from the trivial tangle by performing
\begin{itemize}
    \item the vertical right-handed half-twist $\tau$, and
    \item the horizontal right-handed half-twist $\sigma.$
\end{itemize}
 Specifically in the case when the numerator closure is a link, $\tau^2$ and $\sigma$ suffice to generate the rational tangle. \footnote{In fact $\tau^2$ and $\sigma^2$ suffice, but at the expense of increasing the length of the continued fraction. For our purpose we will use $\tau^2$ and $\sigma$.} This can be seen from a simple lemma regarding continued fractions, as follows. For integers $a_1, \cdots, a_\ell$, denote the continued fraction by  $[a_1,\cdots, a_\ell]$ as follows:
\[
[a_1,\cdots, a_\ell]=a_1+\cfrac{1}{a_2+\cfrac{1}{\cdots+\cfrac{1}{ a_{\ell-1}+\cfrac{1}{a_\ell}}}}
\]

\begin{lemma} \label{le:eventwist}
If $p$ is even and $q$ is odd, we can arrange such that $p/q = [a_1, \cdots, a_\ell]$ where $\ell$ is odd, and  $a_i$ is even when $i$ is odd. 
\end{lemma}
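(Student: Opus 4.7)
The plan is to carry out a parity-controlled version of the standard Euclidean algorithm that produces the continued fraction expansion of $p/q$. I would start by assuming without loss of generality that the fraction is in lowest terms, i.e.\ $\gcd(p,q)=1$, since only the value $p/q$ matters for the tangle.

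First I would describe the algorithm. Set $(N_0,D_0)=(p,q)$, and recursively, as long as $D_{i-1}\ne 0$, choose an integer $a_i$ with $|N_{i-1}-a_i D_{i-1}|<|D_{i-1}|$, then set $(N_i,D_i)=(D_{i-1},\,N_{i-1}-a_i D_{i-1})$. The twist is in the choice of $a_i$: for odd $i$, require $a_i$ to be even; for even $i$, make any valid choice (e.g.\ the nearest integer). I would then check that this choice is always possible at odd steps. For this I need $D_{i-1}$ to be odd at odd steps, because among the one or two integers nearest to $N_{i-1}/D_{i-1}$ at least one is even, and when $D_{i-1}$ is odd the quantity $N_{i-1}/D_{i-1}$ is not a half-integer of the form $k+\tfrac12$ with both neighbours at distance exactly one (in the worst case it is a half-integer, but $k$ and $k+1$ still have opposite parities, so an even one exists).

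Second, I would verify the parity pattern by a one-line induction: starting with $(N_0,D_0)=(\text{even},\text{odd})$ and $a_1$ even gives $(N_1,D_1)=(\text{odd},\text{even})$; then for any $a_2$ we get $(N_2,D_2)=(\text{even},\text{odd})$; and so on. Thus the parity pair alternates $(\text{even},\text{odd}),(\text{odd},\text{even}),(\text{even},\text{odd}),\dots$, and in particular $D_{i-1}$ is odd precisely when $i$ is odd, justifying the choice made above.

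Third, I would show the algorithm terminates at an odd index. Since $|D_i|<|D_{i-1}|$ strictly at every step, termination occurs at the first $i$ with $D_i=0$, equivalently $D_{i-1}\mid N_{i-1}$. Because the Euclidean recurrence preserves $\gcd(N_i,D_i)=\gcd(p,q)=1$, this forces $D_{i-1}=\pm 1$, which is odd; by the parity pattern, $D_{i-1}$ is odd only when $i$ is odd. Hence the final length $\ell=i$ is odd. Moreover, at the terminal step $a_\ell=N_{\ell-1}/D_{\ell-1}=\pm N_{\ell-1}$ is even since $N_{\ell-1}$ is even (again by the parity pattern at odd $\ell$), so the evenness constraint holds at the last odd index as well. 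Verifying $[a_1,\dots,a_\ell]=p/q$ is the standard identity for the continued fraction produced by the Euclidean algorithm.

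The potentially delicate point, and the only place where I would slow down, is the claim that an even $a_i$ with $|N_{i-1}-a_iD_{i-1}|<|D_{i-1}|$ always exists at odd steps; the argument above handles this by exploiting that $D_{i-1}$ is odd at those steps, and that the strict inequality $|D_i|<|D_{i-1}|$ is available because equality $|D_i|=|D_{i-1}|$ would force $D_{i-1}\mid 2N_{i-1}$ with odd $D_{i-1}$, hence $D_{i-1}\mid N_{i-1}$, which is the terminating case already accounted for.
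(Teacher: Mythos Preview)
Your proof is correct and follows essentially the same parity-controlled Euclidean algorithm as the paper: both set up the recursion $p_{i-1}/q_{i-1}=a_i+q_i/p_i$ (in your notation $(N_i,D_i)=(D_{i-1},N_{i-1}-a_iD_{i-1})$), force $a_i$ even at odd steps, track the alternating parity pattern, and read off that the terminal index is odd. The only cosmetic differences are that you deduce $\ell$ odd from $D_{\ell-1}=\pm1$ being odd (via $\gcd=1$) whereas the paper reads it directly from $q_\ell=0$ being even, and you are more explicit than the paper about why an even $a_i$ with $|N_{i-1}-a_iD_{i-1}|<|D_{i-1}|$ actually exists at odd steps---your last paragraph is a bit tangled in its phrasing (the ``half-integer with both neighbours at distance exactly one'' clause is garbled), but the underlying observation that the open interval of length $2$ about $N_{i-1}/D_{i-1}$ contains two consecutive integers whenever $D_{i-1}\nmid N_{i-1}$ is exactly what is needed.
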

\begin{proof}
Set $(p_0,q_0)=(p,q),$  and for each $i\geq 1,$  we will recursively choose integers $(a_i,p_i,q_i)$ with $p_i$ and $q_i$ coprime, such that
 \begin{align}\label{eq:recursive}
         \frac{p_{i-1}}{q_{i-1}}=a_i + \frac{q_i}{p_i}.
    \end{align} 
Since ${p_{i-1}}/{q_{i-1}}$ is between two consecutive integers, we  choose $a_i$ simply to be the closest even integer (resp.~the closest integer) when $i$ is odd (resp.~when $i$ is even) and choose $p_i,q_i$ according to \eqref{eq:recursive}. It follows that $|p_i| = |q_{i-1}|$ and
 \begin{align*}
        p_{i-1} = a_i q_{i-1} + q_{i} \qquad \text{mod } 2.
    \end{align*}
    From these one can inductively show that for all $0\leq i \leq n$ we have
 \begin{align*}
        (p_i,q_i) &= \begin{cases}
         (\text{even},\text{odd}) \hspace{2em}   & i \hspace{0.5em} \text{even}\\
         (\text{odd},\text{even})  & i \hspace{0.5em} \text{odd} .
      \end{cases}
    \end{align*}
    Note that $|q_i| < |p_i| = |q_{i-1}|$, therefore this process will terminate in finite number of steps and
    that the last term $a_n$ must have $n$ odd, since $q_n=0$ which is an even number.
\end{proof}
\begin{figure}
\begin{minipage}{.5\linewidth}
\subfloat[]{
\centering
\labellist
  \pinlabel {\small{$a_3$}}  at 100 222
 \pinlabel \rotatebox{-90}{\small{$-a_2$}}  at 100 135
  \pinlabel {\small{$a_1$}}  at 210 152
\endlabellist
\includegraphics[scale=0.5]{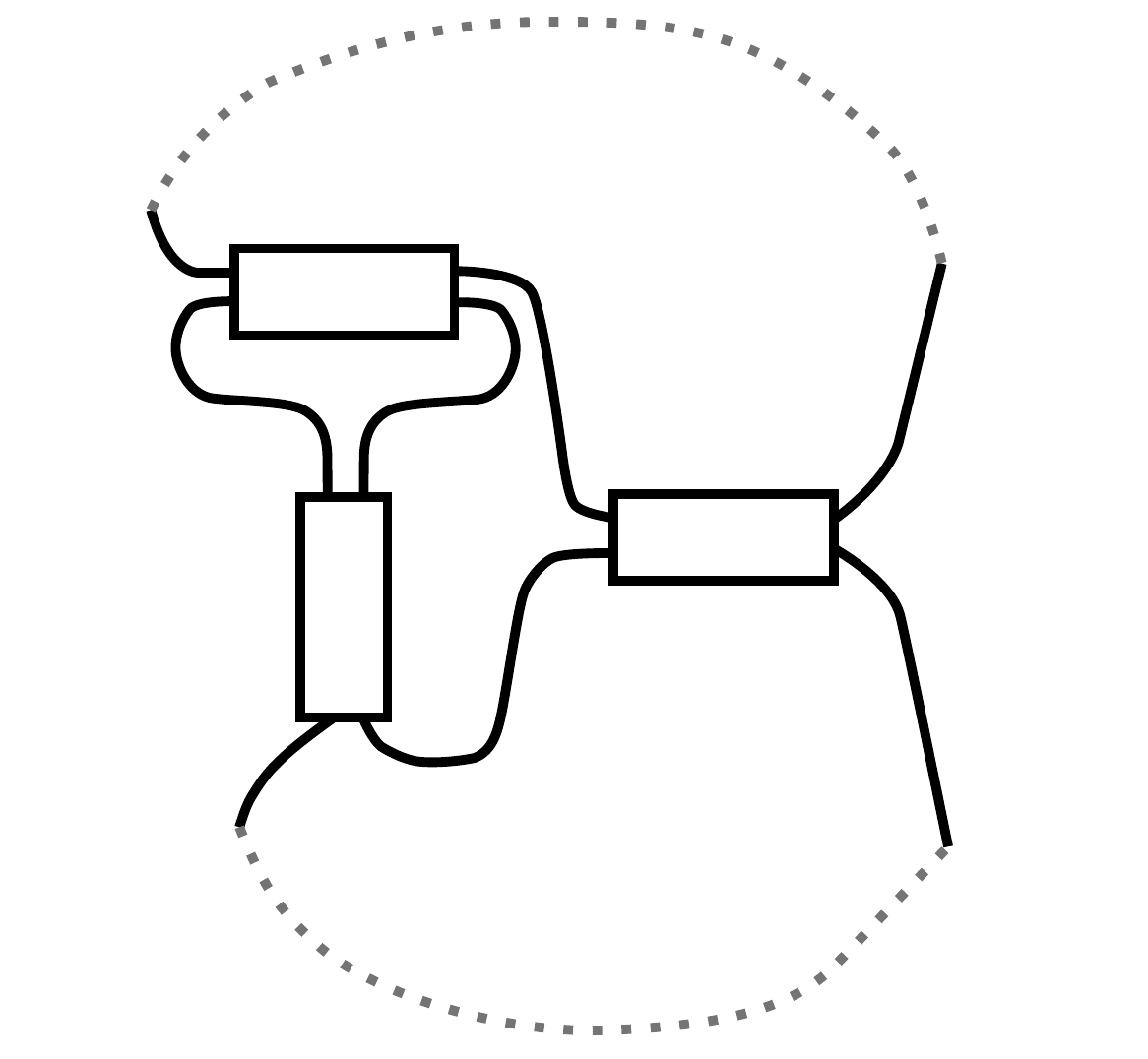}
}
\end{minipage}%
\begin{minipage}{.5\linewidth}
\subfloat[]{
\centering
\labellist
  \pinlabel {\small{$a_3$}}  at 130 166
 \pinlabel {\small{$-a_2$}}  at 220 120
  \pinlabel {\small{$a_1$}}  at 320 162
   \pinlabel {$\pm 1$}  at 40 270
\endlabellist
\includegraphics[scale=0.5]{rationaltangle2}\label{subfig:rationaltangle2}
}
\end{minipage}
\caption{On the left, a rational tangle corresponding to $p/q = [a_1,a_2,a_3]$, with the numerator closure indicated by the dotted arcs.  On the right, for even $a_1$ and $a_3$, the knot $K^{\pm}[a_1,a_2,a_3]$. The numbers in the boxes indicate the number of half-twists. }
\label{fig:rationaltangle}
\end{figure}

\begin{figure}
\begin{minipage}{.5\linewidth}
\centering
\subfloat[]{
\labellist
 \pinlabel { $z$}  at 46 35
 \pinlabel { $w$}  at 104 34
 \pinlabel { $\tau^2$}  at 155 75
\endlabellist
\includegraphics[scale=0.85]{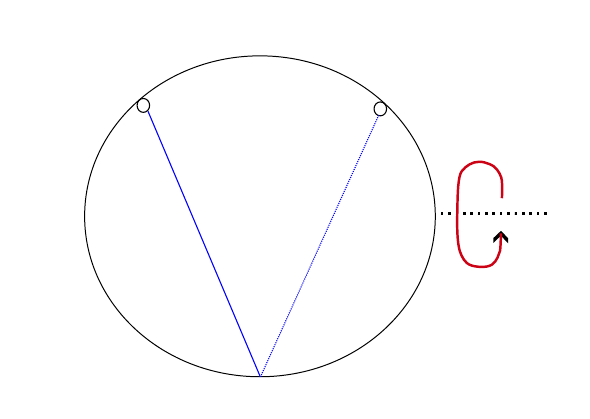}\label{subfig:vt}
}
\end{minipage}%
\begin{minipage}{.5\linewidth}
\centering
\subfloat[]{
\labellist
 \pinlabel { $z$}  at 58 36
 \pinlabel { $w$}  at 100 25
\endlabellist
\includegraphics[scale=0.85]{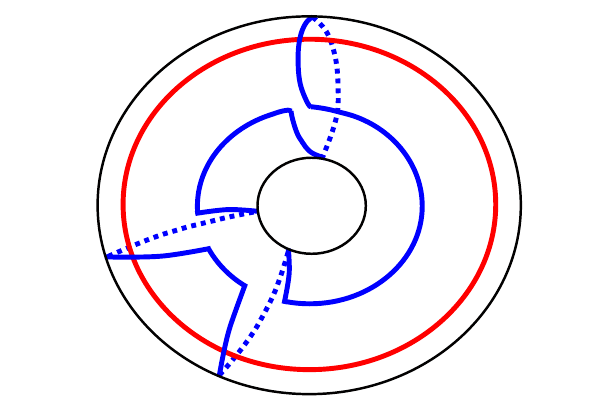}\label{subfig:torus1}
}
\end{minipage}\\
\begin{minipage}{.5\linewidth}
\centering
\subfloat[]{
\labellist
 \pinlabel { $z$}  at 58 65
 \pinlabel { $w$}  at 110 65
\endlabellist
\includegraphics[scale=0.85]{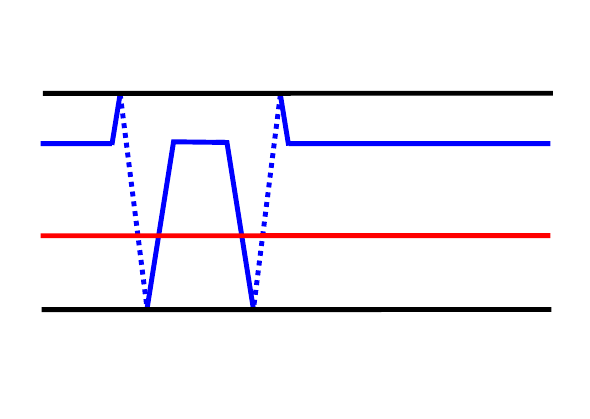}\label{subfig:torus2}
}
\end{minipage}%
\begin{minipage}{.5\linewidth}
\centering
\subfloat[]{
\labellist
 \pinlabel { $z$}  at 49 43
 \pinlabel { $w$}  at 110 44
  \pinlabel { $\sigma$}  at 100 10
\endlabellist
\includegraphics[scale=0.85]{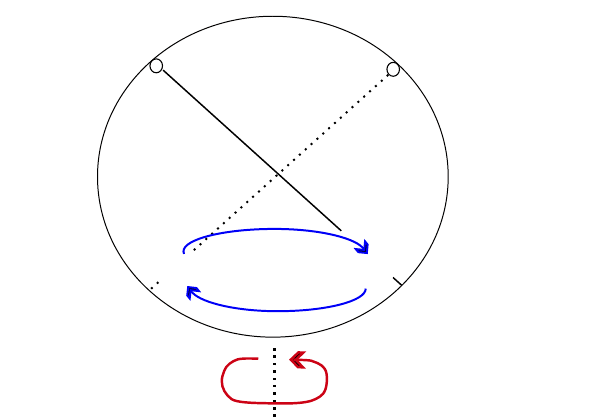}\label{subfig:ht}
}
\end{minipage}
\caption{Figure \protect\subref{subfig:vt} and \protect\subref{subfig:torus1} demonstrate the result of applying  $\tau^2$ on the four-punctured sphere and $\Sigma= S^3 \cup ($one-handle$)$ supposing $\epsilon = +1$, respectively. In Figure \protect\subref{subfig:ht} the two black arcs illustrate the effect of $\sigma.$}
\label{fig:actions}
\end{figure}
\begin{definition}
    Denote by $K^{\pm}([a_1, \cdots, a_\ell])$ the knot obtained by doing $\pm 1$ surgery on the upper strand of the numerator closure of tangle $p/q = [a_1, \cdots, a_\ell]$, respectively.  One can also write $K^{\pm}(p/q)$. See Figure \ref{fig:rationaltangle} \subref{subfig:rationaltangle2}.
\end{definition}

\subsection{A diagrammatic algorithm} \label{subsec:algo}
The following algorithm is due to Jonathan Hales \cite{jhales}.
Let us consider the  effect of action $\tau^2$ and $\sigma$ on the Heegaard diagram $(\Sigma,\alpha, \beta, z, w)$ inside $\epsilon=\pm 1$ surgery. We depict the result of these actions in Figure \ref{fig:actions}. Note that in Figure \ref{fig:actions} \subref{subfig:torus1} and \ref{fig:actions} \subref{subfig:torus2}, in order to preserve the framing of $\beta$ curve, for each added full twist we need to ``double-back" one time such that the intersection points cancel in pairs with sign, and the framing of the $\beta$ curve remains to be $\epsilon$. 

In order to compute the Heegaard Floer homology of a $(1,1)$ knot, the standard treatment is to lift the genus-one Heegaard diagram $(\Sigma,\alpha,\beta,z,w)$ to the universal cover $\R^2$, where  the bigon counts are explicit. Therefore we would like to study the actions of $\tau^{2n}$ and $\sigma$ on the universal cover of $\Sigma$. The following is interpreted from  \cite[Lemma 3.2.3]{jhales}:

For $n\in \Z_{>0}$,  $\tau^{2n}$ has the following effect: in the covering $S^1 \times \R^2$, arrange such that the lifts of the $z$ basepoint lie on a vertical line $\ell_z := \{3/4\}\times \R^2$ (similarly can define $\ell_w$) and suppose $\tilde{\beta}$ is any lift of $\beta$; at each intersection $\tilde{\beta} \cap \ell_z,$ perform an upwards finger move on   $\tilde{\beta}$ to include $n$ more lifts of the $z$ basepoints on $\ell_z$. Equivalently, $\tau^{2n}$ can also be interpreted as performing a downwards finger move at $\ell_w$ to include $n$ lifts of the $w$ basepoints. Parallelly, for $n\in \Z_{>0}$,  $\tau^{-2n}$ has the effect of a downwards finger move to include $n$ lifts of $z$ basepoints or an upwards finger move to include $n$ lifts of $w$ basepoints.  Further lift this to the universal cover $\R^2.$ For an example, see Figure \ref{fig:examplelen1}  in Example \ref{ex:1} and Figure \ref{fig:11diagram}.

On the other hand, note that $\sigma$ is a local action. See Figure \ref{fig:actions} \subref{subfig:ht}. In the universal cover, $\sigma$~(resp. $\sigma^{-1}$) corresponds to performing a clockwise~(resp. counterclockwise) half-Dehn twist around each lift of $z$ and $w$ basepoints. See the middle step of Figure \ref{fig:11diagram}. As a convention in this paper, after each action of $\sigma$ we also switch the role of $z$ and $w$ basepoints , such that $z$ is on the left and $w$ is on the right. The switching induces a chain homotopy equivalence of the resulting Heegaard Floer complex. We include the switching such that the $z$ basepoint is consistently on the left and $w$ basepoint is consistently on the right in each lift. 

According to Lemma \ref{le:eventwist}, each rational tangle whose numerator closure is a two-component link can be obtained from the trivial tangle by applying  $\tau^2$ and $\sigma$ iteratively. Therefore we have described an algorithm that produces the $(1,1)$ diagram of any knot that arises from blowing down a two-bridge link. Here by a $(1,1)$ diagram, we mean the universal cover of a genus-one doubly pointed Heegaard diagram, where we fix a preferred parametrization. We have outlined a proof for the following theorem:
\begin{theorem}[\cite{jhales}]
The actions  $\tau^2$ and $\sigma$  provide an explicit description of the $(1,1)$ diagram of any knot that arises from blowing down a two-bridge link.
\end{theorem}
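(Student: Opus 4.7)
The plan is to assemble the theorem from three ingredients already in place: the Ozsv\'{a}th--Szab\'{o} genus-one Heegaard description of blow-downs of two-bridge links, the arithmetic Lemma~\ref{le:eventwist} on continued fractions, and the explicit lifted descriptions of the actions $\tau^2$ and $\sigma$ on the universal cover of $\Sigma$.

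First, I would fix the base case. Start with the trivial tangle, whose numerator closure is the two-component unlink. Performing $\epsilon$-framed surgery on the upper component yields $S^3$ and the lower component becomes the unknot. The corresponding $(1,1)$ diagram $(\Sigma,\alpha,\beta,z,w)$ is precisely the one given by \cite[Proposition~6.3]{OSknot}: the $\alpha$-curve is $C_2 \cup (\text{trivial arc})$ and the $\beta$-curve is $A_2 \cup C_2 \cup k\mu_2$ with $k$ chosen so that $\alpha\cdot\beta=\epsilon$. Lifting to the universal cover $\R^2$ gives the initial picture on which subsequent twists will act.

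Next, I would invoke Lemma~\ref{le:eventwist} to write an arbitrary rational tangle $p/q$ with $p$ even and $q$ odd as a continued fraction $[a_1,\ldots,a_\ell]$ with $\ell$ odd and every $a_i$ even when $i$ is odd. Each odd-indexed term $a_i=2n_i$ can be realized as $\tau^{2n_i}$, and each even-indexed term $a_i$ can be realized as a composition of $\sigma$-moves followed by the role-swap of $z$ and $w$; the observation that $\tau^2$ and $\sigma$ together suffice (rather than requiring $\sigma^2$) is precisely the parity statement of the lemma. Composing these generators in the order dictated by the continued fraction transforms the trivial tangle into $p/q$.

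Finally, I would verify that each elementary move lifts explicitly. For $\tau^{2n}$, the effect on $(\Sigma,\beta,z,w)$ in the genus-one diagram is an upward finger move on every lift of $\beta$ across the vertical line $\ell_z=\{3/4\}\times\R$ of $z$-basepoints, adding $n$ extra $z$-lifts per crossing (equivalently, a downward finger move along $\ell_w$); the framing is preserved because the extra intersections come in cancelling pairs with the required sign. For $\sigma$, the action is local in a neighborhood of the two basepoints on the four-punctured sphere, and it lifts to a clockwise half-Dehn twist around each lift of $z$ and $w$ (and $\sigma^{-1}$ to the counterclockwise version), followed by the convention-fixing swap of $z$ and $w$ which induces a chain homotopy equivalence. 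These descriptions are the content of \cite[Lemma~3.2.3]{jhales}, and I would just quote them.

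Putting the three steps together yields the algorithm: start with the base $(1,1)$ diagram, decompose the rational tangle via Lemma~\ref{le:eventwist}, and apply the lifted finger moves and half-twists in order. The result is an explicit $(1,1)$ diagram for the knot obtained by blowing down the two-bridge link, proving the theorem. The main subtlety is the bookkeeping in the last step---tracking that each $\tau^2$ preserves the framing $\epsilon$ of $\beta$ via the cancelling double-back and that the $z/w$-swap following each $\sigma$ is consistent across iterations---but this is all local and diagrammatic, and the Figures \ref{fig:actions} and the examples such as Figure~\ref{fig:11diagram} make it verifiable by inspection rather than computation.
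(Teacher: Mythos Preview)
Your proposal is correct and follows essentially the same route as the paper: the paper's proof is literally the sentence ``According to Lemma~\ref{le:eventwist}, each rational tangle whose numerator closure is a two-component link can be obtained from the trivial tangle by applying $\tau^2$ and $\sigma$ iteratively. Therefore we have described an algorithm\ldots,'' with the preceding discussion (Ozsv\'ath--Szab\'o's Proposition, the continued-fraction lemma, and the lifted descriptions of $\tau^{2n}$ and $\sigma$ from \cite{jhales}) serving as the outline you have reconstructed in more detail. One small wording issue: the parity statement of Lemma~\ref{le:eventwist} is what forces the odd-indexed coefficients to be even (hence $\tau^2$ rather than $\tau$); the use of $\sigma$ rather than $\sigma^2$ is a separate convenience noted in the paper's footnote, not a consequence of the lemma.
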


\section{Classifying $K^{\pm}(p/q)$ with continued fraction length $\leq 3$} \label{sec:classify}
 Using the algorithm due to Ozsv\'{a}th, Szab\'{o} and Hales, the goal of this section is to give a complete classification of $\CFKi(S^3,K^{\pm}(p/q))$ for  $p/q = [a_1,\cdots,a_\ell]$ where $\ell= 1$ or $3$ and $a_i$ is even for each odd $i$. Note that 
\begin{align}
     K^{-}(p/q) = -K^{+}(-p/q) = - K^{+}([-a_1, \cdots, -a_\ell ]).\label{eq:mirror}
\end{align}
As a road map for this section, we will first consider the case $\l=1$ in the next subsection. Then, in Section \ref{subsec:general}, we will discuss some general facts about the $\l=3$ case, reducing it to five subcases. Finally we will prove each one of these subcases in Section \ref{subsec:class0} through \ref{subsec:class4}, completing the classification. All five subcases and their corresponding conclusions are recorded in Proposition \ref{prop:summary} for the reader's convenience.
\subsection{Case $K^{\pm}([a_1])$} \label{subsec:len1}
The following proposition together with \eqref{eq:mirror} classify all the $K^{\pm}([a_1])$.
\begin{proposition}\label{prop:length1}
    For $n>0$
    \begin{align}
      \label{eq:lone1}  K^{+}([2n]) &= - T_{n-1,n}\\
        K^{+}([-2n]) &= -T_{n,n+1}
    \end{align}
    and $K^{+}([0])$ is the unknot. 
\end{proposition}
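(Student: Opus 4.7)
The plan is to identify each knot $K^+([a_1])$ by a direct Dehn-surgery / Kirby-calculus computation, which in parallel can be verified by running the Ozsv\'ath--Szab\'o--Hales algorithm of Section~\ref{subsec:algo} and reading off the resulting $(1,1)$ diagram.

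The case $a_1 = 0$ is immediate: the tangle $[0]$ is the trivial horizontal tangle, its numerator closure is the two-component unlink, and blowing down a split $+1$-framed unknot does not affect the other (unknotted) component. For $a_1 = \pm 2n$ with $n>0$, the two-bridge link is the $(2, \pm 2n)$-torus link, consisting of two unknotted $(1, \pm n)$-curves $U$ and $K$ on a standard torus $T \subset S^3$. I would work in the $(\mu_U, \lambda_U)$ coordinates on the peripheral torus of $U$, in which $K$ is a $(\pm n, 1)$-curve (so $\mathrm{lk}(U,K) = \pm n$). Performing $+1$-Dehn surgery on $U$ reglues the solid torus so that the new meridian is $\mu_U + \lambda_U$. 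After a change of basis putting the meridians of the two solid tori into standard position, the curve $K$ becomes a $(\pm n - 1, \pm n)$-curve on $T$ in the new $S^3$. This realizes $K^+([2n])$ as $\pm T_{n-1, n}$ and $K^+([-2n])$ as $\pm T_{n, n+1}$.

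To corroborate this with the diagrammatic approach, I would run $\tau^{\pm 2n}$ on the trivial starting $(1,1)$ diagram: in the universal cover $\R^2$, this inserts $n$ upward (resp.\ downward) finger moves in the lifted $\widetilde{\beta}$ curve at each intersection with $\ell_z$. The resulting $\alpha \cap \beta$ intersection pattern is a staircase that matches the standard $(1,1)$ Heegaard diagram of a negative torus knot, providing an independent check of the identification.

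The main obstacle I foresee is fixing the overall sign: confirming that a mirror truly appears, so that the answer is $-T_{n-1, n}$ and $-T_{n, n+1}$ rather than the positive torus knots. This requires chasing orientations carefully through the basis change (or, equivalently, through the swap of the roles of $z$ and $w$ that is built into the $(1,1)$ algorithm). The small cases $n = 1$ serve as sanity checks, since both $-T_{0,1}$ and $-T_{1,2}$ are the unknot, matching the expected trivial outputs of $K^+([2])$ and $K^+([-2])$.
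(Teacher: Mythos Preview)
Your approach is correct in outline and genuinely different from the paper's. The paper argues purely in the braid group: it exhibits $K^+([2n])$ as the closure of the word $\omega_1\omega_2\cdots\omega_{n-1}(\omega_{n-2}\cdots\omega_1)^{n-1}$ (obtained by realizing the blow-down of the $+1$-framed component as a negative full twist on $n$ strands), and then shows by an induction using the braid relations that this word equals $(\omega_{n-1}\cdots\omega_1)^{n-1}$, the standard braid for $-T_{n-1,n}$. Your torus-curve/surgery argument is more geometric and sidesteps the braid algebra; it also treats the $[2n]$ and $[-2n]$ cases uniformly, whereas the paper writes out only one and leaves the other to the reader.

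One correction to your sketch: with $K=\pm n\,\mu_U+\lambda_U$ and the post-surgery meridians $(\mu_U+\lambda_U,\ \lambda_U)$, one finds $\mu_U=(\mu_U+\lambda_U)-\lambda_U$ and hence $K=\pm n(\mu_U+\lambda_U)+(1\mp n)\lambda_U$, so $K$ is a $(\pm n,\,1\mp n)$-curve, not $(\pm n-1,\,\pm n)$ as you wrote. For the $+$ sign this gives $T(n,1-n)=-T_{n-1,n}$, and for the $-$ sign $T(-n,n+1)=-T_{n,n+1}$; so the mirror you were worried about in fact falls out automatically once the change of basis is done carefully, and no separate orientation chase is needed. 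Your proposed $(1,1)$-diagram cross-check is exactly what the paper carries out immediately afterward in Example~\ref{ex:1} for $K^+([-2n])$, where the staircase complex of $-T_{n,n+1}$ is read off directly.
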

\begin{proof}
    We will only prove \eqref{eq:lone1}. The other case is similar and left for the reader. Recall that the torus knot $-T_{n,n-1}$  is the braid closure of $(\w_{n-1}\cdots\w_1)^{n-1}$ where $\w_i$ is the braid group element that exchanges the $i$-th and $(i+1)$-th strand, with the crossing convention given by Figure\ref{fig:braid}\subref{subfig:braid2}. Note that a left handed full twist of the first $k$ strands has a presentation of
    $(\w_{k-1}\cdots\w_1)^{k}.$
    
    As depicted in Figure\ref{fig:braid}\subref{subfig:braid3}, the knot $K^{+}([2n])$ is the braid closure of $\w_1\w_2\cdots \w_{n-1} (\w_{n-2}\cdots \w_1)^{n-1}.$ Therefore it suffices to show that as elements of the braid group 
    \begin{align}
        (\w_{n-1}\cdots\w_1)^{n-1} &= \w_1\w_2\cdots \w_{n-1} (\w_{n-2}\cdots \w_1)^{n-1}.\label{eq:toprove}
        \intertext{From the braid relation}
        \w_i \w_{i+1} \w_i &= \w_{i+1} \w_i \w_{i+1},
        \intertext{it is straightforward to see that for $1\leq k<i\leq k+j,$}
        \w_i (\w_{k}\w_{k+1} \cdots \w_{k+j}) &= (\w_{k}\w_{k+1} \cdots \w_{k+j}) \w_{i-1}.\label{eq:relation}
    \end{align}
    We proceed by induction. For each $1\leq i\leq n-1$, we claim that
     \begin{align}
     (\w_{n-1}\cdots \w_1)^{n-1} &= (\w_{n-1}\cdots \w_1)^{n-1-i}(\w_{n-i}\cdots\w_{n-1})(\w_{n-2}\cdots \w_1)^{i}.\label{eq:inductionhypothesis}
        \end{align}
  This is obviously true for $i=1.$ Suppose this is true for some $1\leq i \leq n-2$, by using \eqref{eq:relation}, we have
    \begin{align*}
        (\w_{n-1}\cdots \w_1)^{n-1} &= (\w_{n-1}\cdots \w_1)^{n-1-i}(\w_{n-i}\cdots\w_{n-1})(\w_{n-2}\cdots \w_1)^{i}\\
        &=(\w_{n-1}\cdots \w_1)^{n-2-i} \w_{n-1}\cdots\w_{n-i}(\w_{n-i-1}\w_{n-i}\cdots\w_{n-1})\w_{n-i-2}\cdots\w_1 (\w_{n-2}\cdots \w_1)^{i}\\
         &=(\w_{n-1}\cdots \w_1)^{n-2-i}(\w_{n-i-1}\w_{n-i}\cdots\w_{n-1})(\w_{n-2}\cdots\w_{n-i-1}) \w_{n-i-2}\cdots\w_1 (\w_{n-2}\cdots \w_1)^{i}\\
         &= (\w_{n-1}\cdots \w_1)^{n-1-(i+1)}(\w_{n-i-1}\cdots\w_{n-1})(\w_{n-2}\cdots \w_1)^{i+1}
    \end{align*}
Thus we have proved \eqref{eq:inductionhypothesis}. Taking $i = n-1$ in \eqref{eq:inductionhypothesis} yields \eqref{eq:toprove}.
\end{proof}

\begin{figure}[htb!]
\begin{minipage}{1\linewidth}
\centering
\subfloat[The knot $K^{+}({[2n]})$.]{
\begin{tikzpicture}
\begin{scope}[thin, black!0!white]
          \draw  (-5, 0) -- (5, 0);
      \end{scope}
    \node at (0,0){\includegraphics[scale=0.3]{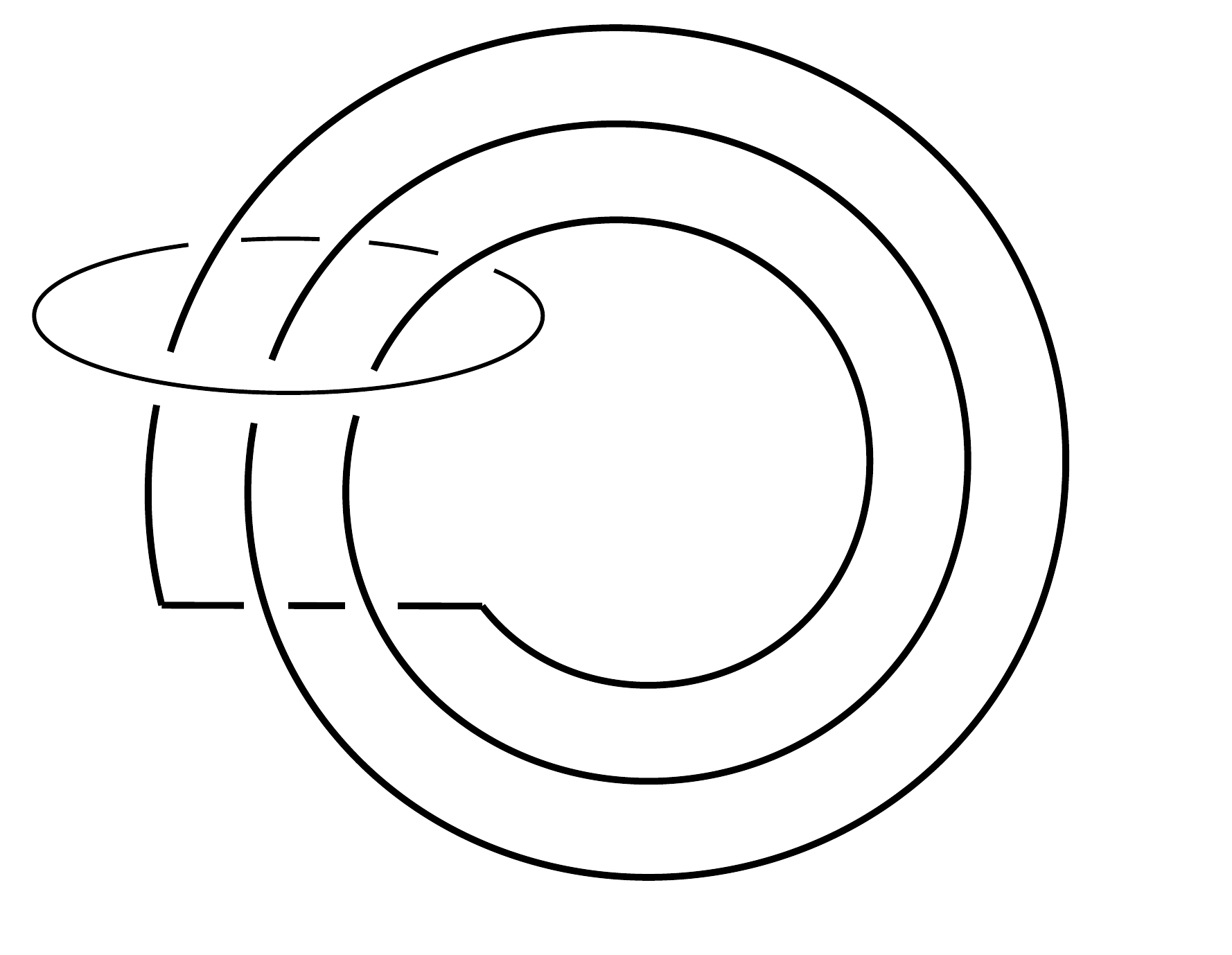}\label{subfig:braid1}};
     \draw [decorate,decoration={brace,amplitude=4pt},xshift=0.5cm,yshift=0pt]
      (0,2) -- (0,1) node [midway,right,xshift=0.1cm] {$n$-strands};
      \node at (-2.5,1.2){$+1$};
\end{tikzpicture}
}
\end{minipage}\\
\centering
\begin{minipage}{.5\linewidth}
\centering
\subfloat[The braid group element $\w_i$.]{
\centering
\begin{tikzpicture}
\begin{scope}[thin, black!0!white]
          \draw  (-4, 0) -- (4, 0);
      \end{scope}
    \node at (0,0){\includegraphics[scale=0.6]{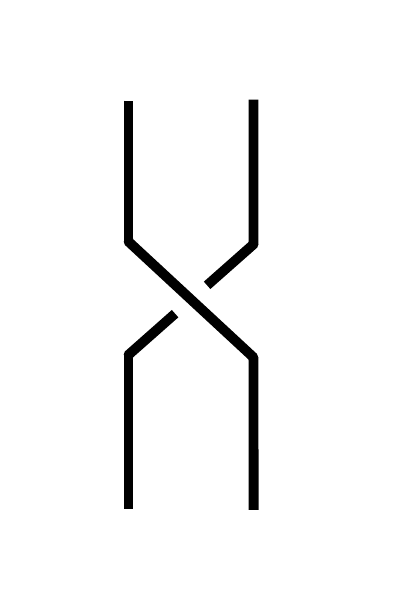}\label{subfig:braid2}};
 \node at (-0.5,1.5){$i$};
      \node at (0.5,1.5){$i+1$};
\end{tikzpicture}
}
\end{minipage}%
\begin{minipage}{.5\linewidth}
\centering
\subfloat[A braid presentation of $K^{+}({[2n]})$.]{
\centering
\begin{tikzpicture}
\begin{scope}[thin, black!0!white]
          \draw  (-5, 0) -- (5, 0);
      \end{scope}
    \node at (0,0){\includegraphics[scale=0.5]{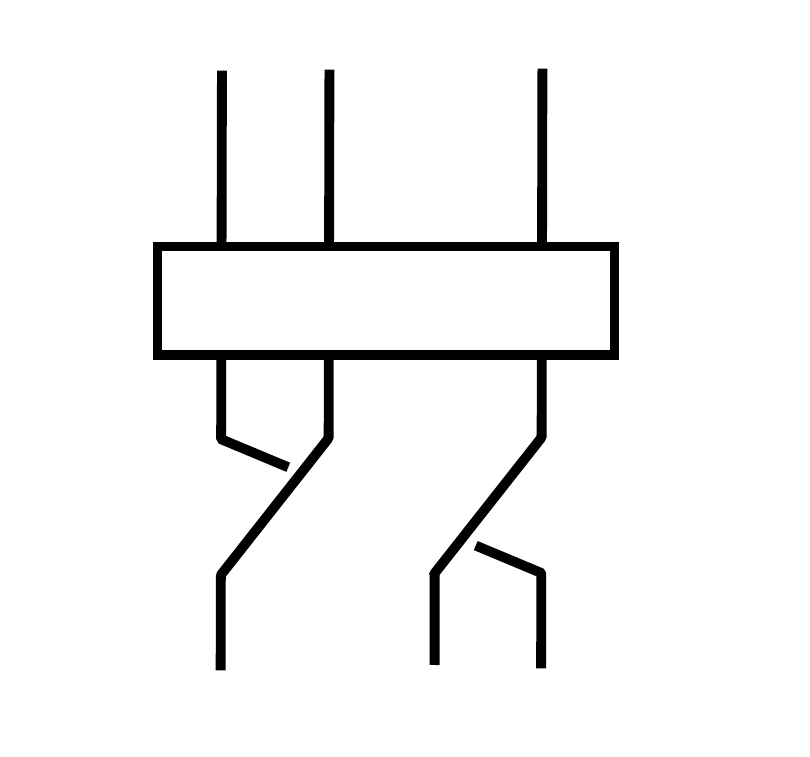}\label{subfig:braid3}};
     \draw [decorate,decoration={brace,amplitude=4pt},xshift=0cm,yshift=0pt]
      (-1,1.6) -- (0.8,1.6) node [midway,above,yshift=0.1cm] {$n$-strands};
      \node at (-0.1,0.4){$-1$};
      \node at (0.2,1){$\cdots$};
       \node at (-0.25,-0.8){$\cdots$};
\end{tikzpicture}

}
\end{minipage}
\caption{}
\label{fig:braid}
\end{figure}

\begin{example}\label{ex:1}
    Consider $K^{+}([-2n])$ for $n>0.$ Applying the algorithm by Ozsv\'{a}th, Szab\'{o} and Hales, starting from a $(1,1)$ diagram for the unknot $K^{+}([0])$, where the $\beta$ curve has slope $-1$, we equivariantly perform downwards finger moves to include $n$ copies of the $z$ basepoint.   The resulting $(1,1)$ diagram is  depicted in Figure \ref{fig:examplelen1}. For a chosen lift $\aa$ of $\alpha$, we mark the intersection points of $\aa\cap \bb$ from left to right by $x_1, x_2, \cdots$  in order. There are $2n-1$ intersection points in total, and for each $ i \in \{1,\cdots, n-1\}$, there is a bigon from $x_{2i-1}$ to $x_{2i}$ with $i$ copies of $z$ and  a bigon from $x_{2i+1}$ to $x_{2i}$ with $n-i$ copies of $w$; there are no other bigons. Therefore we conclude that  $\CFKi(S^3,K^{+}([-2n]))$ is generated by $x_1, \cdots x_{2n - 1}$ with the differentials
    \begin{align*}
         \d x_{2i-1} &=  x_{2i-2} + x_{2i}  \hspace{5em} \ 
         \intertext{and the filtration shifts}
         \dij (x_{2i-1},x_{2i-2}) &= (n-i+1,0)  \\
         \dij (x_{2i-1},x_{2i}) &= (0,i)  
    \end{align*}
    for $i=0,\cdots n$, where we take $x_{0} = x_{2n} = 0.$ For the definition of $\dij$ see Definition \ref{def:gradingshift}. 

    \begin{figure}[htb!]
        \centering
        \begin{tikzpicture}
            \node at (0,0) {\includegraphics[scale=0.4]{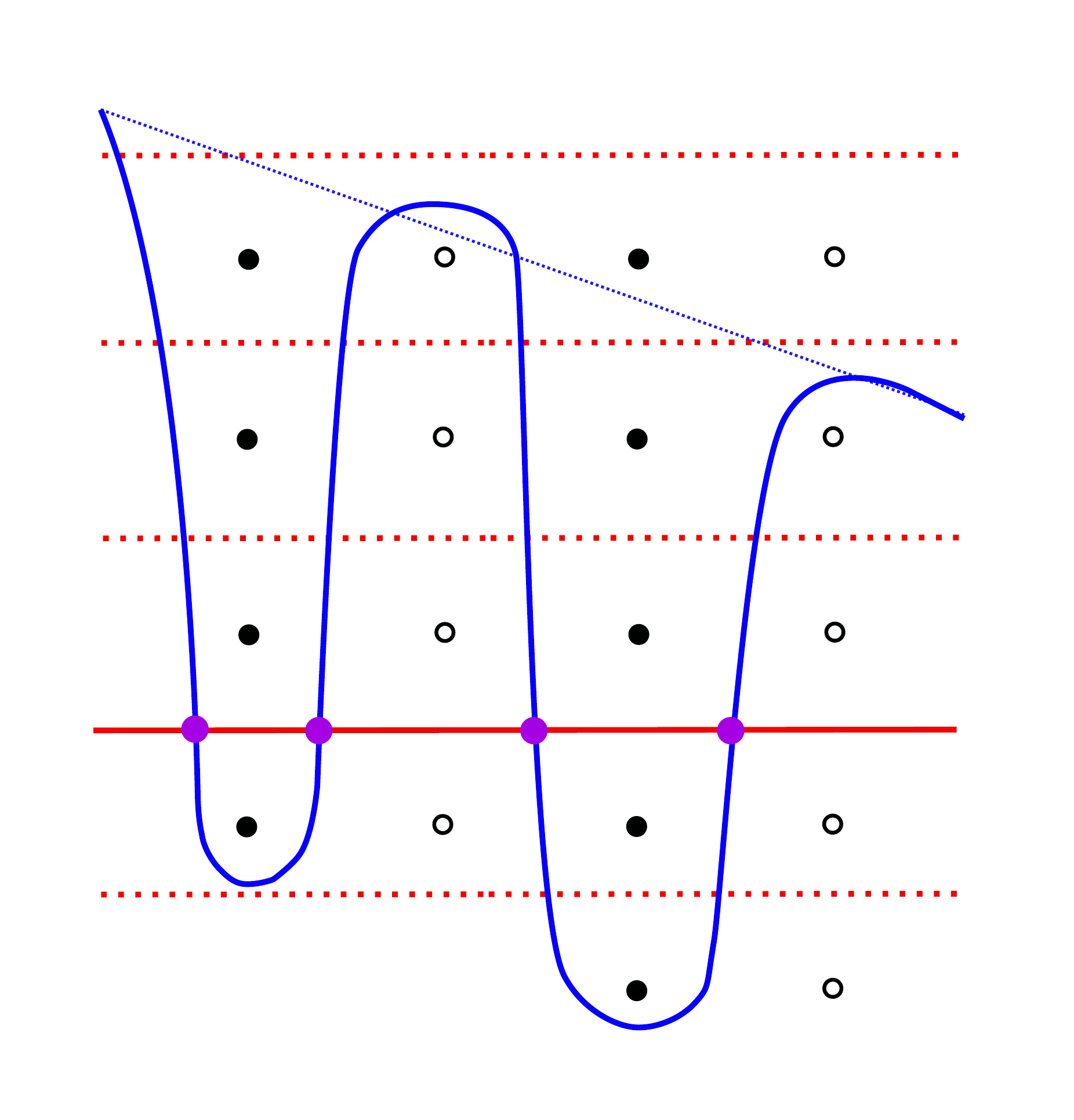}};
             \draw [decorate,decoration={brace,amplitude=4pt,mirror},xshift=0cm,yshift=0pt]
      (-2.5,2.4) -- (-2.5,-2.2) node [midway,yshift=0.2cm,xshift=-0.5cm] {$n$};
      \node at (-3.5,-1.2) {$\aa$};
      \node at (-2.1,-0.95) {$x_1$};
       \node at (-1.2,-0.95) {$x_2$};
       \node at (0.3,-0.95) {$x_3$};
       \node at (1.7,-0.95) {$x_4$};
        \end{tikzpicture}        
        \caption{The $(1,1)$ diagram for $K^{+}([-2n])$ with $n>0$, where the solid dots indicate (lifts of) the $z$ basepoint and the hollow dots indicate (lifts of) the $w$ basepoint.}
        \label{fig:examplelen1}
    \end{figure}
    From Proposition \ref{prop:length1} we already know $K^{+}([-2n]) = -T_{n,n+1}$. So this provides another way of computing the knot Floer complex of $T_{n,n+1}$ torus knots independent of the structure theorem of L-space knot \cite{OSlens} and the computation of their Alexander polynomials.

    On the other direction, without knowing $K^{+}([-2n]) = -T_{n,n+1}$, just by looking at their $(1,1)$ diagrams in Figure \ref{fig:examplelen1}, one can also show they are L-space knots via \cite[Theorem 1.2]{GLV} (the $(1,1)$ diagram is coherent) and this gives a short cut for computing their Alexander polynomials. 
\end{example}
\subsection{General results on the case $\l=3$}\label{subsec:general} From now on we fix the length of the continued fraction to be $3.$ Note that $K^{\pm}([2n_1,0,2n_2]) = K^{\pm}([2n_1 + 2n_2])$, therefore the results here also apply to the knots in the previous section. We will end up reducing the classification to the case when $a_2 = \pm 1$ or $0$, so for the following lemma we consider some relations between this subclass of knots.
\begin{lemma}\label{le:relation}
For any integer $b$, $n$, $n_1$ and $n_2$, we have the following
    \begin{align}
       \label{eq:relation1} K^{\pm}([2n_1,b,2n_2]) &= K^{\pm}([2n_2,b,2n_1]) \\
       \label{eq:relation2} K^{\pm}([2n_1,1,2n_2]) &= K^{\pm}([2(n_1+1),-1,2(n_2+1)])\\
       \label{eq:relation3} K^{\pm}([0,\pm 1,2n]) &= K^{\pm}([2n])\\
        \label{eq:relation4} K^{\pm}([2,-1,2n]) &= K^{\pm}([2n-2])\\
        \label{eq:relation5}  K^{\pm}([-2,1,2n]) &= K^{\pm}([2n+2])\\
         \label{eq:relation6}  K^{+}([2n_1,1,2n_2]) &=  -K^{-}([-2n_1,-1,-2n_2]).
    \end{align}
\end{lemma}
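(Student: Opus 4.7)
The plan is to establish each of the six relations via a combination of rational tangle arithmetic, Kirby calculus moves, and the mirror formula \eqref{eq:mirror}. Relation \eqref{eq:relation1} follows from the evident $180^\circ$ rotational symmetry of the diagram in Figure \ref{fig:rationaltangle}\subref{subfig:rationaltangle2}: rotating about a horizontal axis exchanges $a_1$ with $a_3$ while preserving the distinguished $\pm 1$-framed upper component, so the blow-downs produce isotopic knots. For relation \eqref{eq:relation6}, applying \eqref{eq:mirror} gives $K^{-}([-2n_1,-1,-2n_2]) = -K^{+}([2n_1,1,2n_2])$, which rearranges to the desired identity.

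For relation \eqref{eq:relation2}, I would verify by a short computation that both continued fractions equal the same rational number $(4n_1 n_2 + 2n_1 + 2n_2)/(2n_2+1)$. By Conway's classification of rational tangles, this produces an isotopy between the underlying tangles. Since in both tangles the ``upper'' strand carrying the $\pm 1$-framing corresponds to the strand determined by the parity conventions of the continued fraction (both expressions satisfy the form of Lemma \ref{le:eventwist}), the isotopy preserves this distinguished strand, so the two blow-downs yield isotopic knots.

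Relations \eqref{eq:relation3}, \eqref{eq:relation4}, and \eqref{eq:relation5} are different in character, since the two tangles appearing on each side have distinct $p/q$ and thus represent non-isotopic two-bridge links; here the $\pm 1$-framing must be used essentially. These will be handled by explicit Kirby calculus: a $\pm 1$-framed unknotted component with two strands of the tangle running through it can be blown down and absorbed into the remaining structure, thereby converting a length-three continued fraction into a length-one one. For \eqref{eq:relation4}, for example, the $+1$-framed upper component combined with the $a_2 = -1$ half-twist can be isotoped so that a blow-down converts two of the half-twists from $a_3 = 2n$ into a trivial tangle, leaving $[2n-2]$. Relations \eqref{eq:relation3} and \eqref{eq:relation5} are proved by analogous explicit manipulations, with the parity of the blow-down sign dictating whether twists are added or cancelled.

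The main obstacle will be the bookkeeping: the signs of the half-twists, the sign of the surgery coefficient, and which strand is the ``upper'' one must be tracked consistently through each manipulation. In particular, intermediate diagrams may fail to satisfy the parity convention from Lemma \ref{le:eventwist}, so a reparametrization of the continued fraction may be needed to put the final expression in the standard form $[a_1, \ldots, a_\ell]$ with $\ell$ odd and $a_i$ even at odd positions $i$ before the resulting knot can be directly identified with the claimed $K^{\pm}$.
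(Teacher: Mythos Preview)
Your approach to \eqref{eq:relation1}, \eqref{eq:relation2}, and \eqref{eq:relation6} matches the paper's (rotation symmetry, continued fraction equality, and mirroring respectively); the only quibble is that the relevant rotation for \eqref{eq:relation1} is about a \emph{vertical} axis in the plane of Figure~\ref{fig:rationaltangle}\subref{subfig:rationaltangle2}, which is what swaps the left box $a_3$ with the right box $a_1$ while keeping the upper component upper.

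Where you diverge from the paper is in \eqref{eq:relation3}--\eqref{eq:relation5}, and the divergence stems from a misconception. You assert that the two sides ``have distinct $p/q$ and thus represent non-isotopic two-bridge links,'' and therefore plan a Kirby-calculus argument that genuinely uses the $\pm 1$-framing. But distinct continued-fraction values do \emph{not} imply non-isotopic two-bridge links: by Schubert's classification, $b(p,q)\cong b(p,q')$ whenever $q\equiv q'^{\pm 1}\pmod{p}$. A short computation gives
\[
[2,-1,2n]=\tfrac{2n-2}{2n-1},\qquad [-2,1,2n]=\tfrac{2n+2}{-(2n+1)},\qquad [0,\pm 1,2n]=\tfrac{2n}{\pm 2n+1},
\]
and in each case the denominator is congruent to $1$ modulo the numerator, so the underlying two-bridge link is already isotopic to the one for $[2n-2]$, $[2n+2]$, $[2n]$ respectively. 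This is exactly the paper's argument: it says \eqref{eq:relation4} and \eqref{eq:relation5} are ``straightforward from the continued fraction,'' and \eqref{eq:relation3} is ``clear from the link diagram.'' No blow-down is needed.

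Your Kirby-calculus route would presumably still succeed, but it is working harder than necessary, and the motivating premise (that the framing is essential because the links differ) is false. The only genuine care required---in either approach---is checking that the isotopy of two-bridge links carries the designated upper component to the upper component, which you do flag correctly as a bookkeeping point.
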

\begin{proof}
   The relation \eqref{eq:relation1} can be seen by rotating the paper plane by $180$ degrees along a vertical axis.\\
   To show \eqref{eq:relation2}, we compute
   \begin{align*}
       [2(n_1+1),-1,2(n_2+1)]&=2(n_1+1)+\cfrac{1}{-1+\cfrac{1}{2(n_2+1)}}\\
      & = \frac{-4(n_1+1)(n_2+1) + 2(n_1 +1) + 2(n_2+1)}{1-2(n_2+1)}\\
      & = \frac{4n_1n_2+2n_1+2n_2}{2n_2+1}\\
      & =  [2n_1,1,2n_2].
   \end{align*}
   The relation \eqref{eq:relation3} is clear from the link diagram. Relations \eqref{eq:relation4} and \eqref{eq:relation5} are straightforward from the continued fraction. Relation \eqref{eq:relation6} comes from mirroring.
\end{proof}

\subsubsection{Full Dehn twists} \label{subsubsec:fullDehntwists}
Let $\H^{\pm}([2n_1,b,2n_2])$ denote the lifted Heegaard diagram in $S^1\times \R$ obtained by applying the action  $\tau^{2n_2}\sigma^b\tau^{2n_1}$ over the $\mp 1$ sloped curve $\bb$. By default, we also fix a preferred lift $\aa$ of $\alpha$ and $\bb$ of $\beta$. According to the algorithm discussed in Section \ref{subsec:algo}, with the above data, $\H^{\pm}([2n_1,b,2n_2])$ induces a basis $B$ for the knot Floer complex $\CFKi(S^3,K^{\pm}([2n_1,b,2n_2]))$ 

When  $b=\pm1$ or $0$,   
after performing $\sigma^b\tau^{2n_1}$, connect each pair of the $z$ and $w$ basepoint with a horizontal line segment $\gamma$.  After potentially pulling tight $\bb,$ we see that $\gamma$ intersects $\bb$ at most once. See Figure \ref{fig:halfDehntwist}\subref{subfig:halfDehntwist0} for the case when $b=1$; the other two cases are similar. The action of $\sigma^2$ is given by the local transformation depicted in Figure \ref{fig:halfDehntwist}\subref{subfig:halfDehntwist12}. Since this transformation is defined inside a small neighbourhood of $\gamma$, we may reverse the order of $\sigma^2$ and $\tau^{2n_2}$.
\begin{definition}
    For an arc $\gamma$, define $\rho_+$ to be the conformal transformation in a neighbourhood of $\gamma$ depicted in Figure \ref{fig:halfDehntwist}\subref{subfig:halfDehntwist12}. This is to be understood up to  rotation in $\R^2.$  We can similarly define $\rho_-$, given by a reflection of $\rho_+$ along a vertical axis. The action $\rho_\pm$ is trivial if $\bb \cap \gamma_{\pm} = \varnothing.$
\end{definition}
\begin{remark}
    A useful perspective is to view $\rho_{+}$ as the reverse of the counterclockwise finger move depicted in Figure \ref{fig:halfDehntwist}\subref{subfig:halfDehntwist4}, where we move both the basepoint and the $\bb$ curve. Equivalently one can also perform a counterclockwise finger move on the other basepoint to achieve the same result. Similar is for  $\rho_{-}$, with the only difference being that the finger move is clockwise.
\end{remark}
We stress that $\rho_\pm$ preserves the angle between $\bb$ and $\gamma.$ For example $\rho_- \circ \rho_+$ is not well defined.  Nevertheless, 
this allows us to reduce any $\H^{\pm}([2n_1,b,2n_2])$ to the case when $b=\pm 1$ or $0.$

\begin{figure}[htb!]
\begin{minipage}{.4\linewidth}
\centering
\subfloat[The horizental segment $\gamma.$]{
\begin{tikzpicture}
\begin{scope}
    \begin{scope}[thin, black!0!white]
	  \draw  (-4, 0) -- (4,0);
      \end{scope}
\end{scope}
    \node at (0,0) {\includegraphics[scale=1.5]{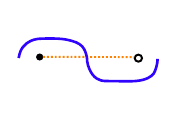}};
\end{tikzpicture}
\label{subfig:halfDehntwist0}
}
\end{minipage}%
\begin{minipage}{.6\linewidth}
\centering
\subfloat[The transformation $\rho_+.$]{
\begin{tikzpicture}
    \begin{scope}[thin, black!0!white]
	  \draw  (-5, 0) -- (5,0);
      \end{scope}
 \draw [->] (-0.8, 0) -- (0.8,0);
    \node at (-2,0) {\includegraphics[scale=1.5]{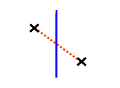}};
   \node at (2,0) {\includegraphics[scale=1.5]{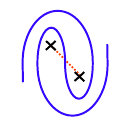}};
\end{tikzpicture}
\label{subfig:halfDehntwist12}
}
\end{minipage}\\
\begin{minipage}{0.5\linewidth}
\centering
\subfloat[The result after performing $\rho_+^{m}.$]{
\begin{tikzpicture}
\begin{scope}
    \begin{scope}[thin, black!0!white]
	  \draw  (-5, 0) -- (4,0);
      \end{scope}
\end{scope}
  \draw [decorate,decoration={brace,amplitude=4pt},xshift=0cm,yshift=0pt]
      (-0.45,1.05) -- (-0.1,0.35) node [midway,right,xshift=0.05cm,yshift=0.45cm] {$2m$ strands};
    \node at (0,0) {\includegraphics[scale=1.5]{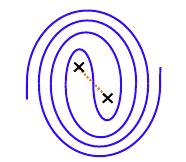}};
\end{tikzpicture}
\label{subfig:halfDehntwist3}
}
\end{minipage}%
\begin{minipage}{0.5\linewidth}
\centering
\subfloat[A counterclockwise finger move.]{
\begin{tikzpicture}
    \begin{scope}[thin, black!0!white]
	  \draw  (-4, 0) -- (5,0);
      \end{scope}
    \node at (0,0) {\includegraphics[scale=1.5]{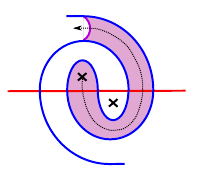}};
\end{tikzpicture}
\label{subfig:halfDehntwist4}
}
\end{minipage}
\caption{In above diagrams, the $\times$ symbol is used to indicate either basepoint.}
\label{fig:halfDehntwist}
\end{figure}

For the following, set $\epsilon = \sgn(b), m=\floor{\epsilon b/2}$ and $b' = b -2\epsilon m$. In order to understand $\H^{\pm}([2n_1,b,2n_2])$, it suffices to understand $\H^{\pm}([2n_1,b',2n_2])$ and the action $\rho_{\epsilon}^m$ over certain arcs (the image of all the $\gamma$ under $\tau^{2n_2}$). 
\begin{definition}\label{def:gamma}
When $b'=1$ or $0$, define $\gamma_{\epsilon}$ to be a straight line segment of slope $-2n_2$ from a $z$ basepoint to a $w$ basepoint in $\H^{\pm}([2n_1,1,2n_2])$. 

When $b'=-1$,  define $\gamma_-$ to be  a straight line segment of slope $-2(n_2-1)$ from a $z$ basepoint to a $w$ basepoint in $\H^{\pm}([2(n_1-1),1,2(n_2-1)])$.

We abuse the notation and let $\gamma_\pm$ also denote the line segments with the same slope in $\H^{\pm}([2n_1,b,2n_2])$.
\end{definition}
\begin{lemma} \label{le:sigma2} Given $n_1,b$ and $n_2,$ for $m$ and $b'$ as above,
\begin{itemize} 
    \item  when $b\geq 0$,  $\H^{\pm}([2n_1,b,2n_2])$ is obtained from $\H(K^{\pm}([2n_1,b',2n_2]))$ by performing the local transformation $\rho_+^{m}$ over all $\gamma_{+}$;
    \item  when $b\leq 0$,  $\H^{\pm}([2n_1,b,2n_2])$ is obtained from $\H(K^{\pm}([2n_1,b',2n_2]))$ by performing the local transformation $\rho_-^{m}$ over all $\gamma_{-}$.
\end{itemize}
\end{lemma}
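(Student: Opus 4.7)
The plan is to exploit the locality of $\sigma^{\pm 2}$ and commute it past the final $\tau^{2n_2}$. I would start by factoring $\sigma^b = \sigma^{2\epsilon m}\cdot \sigma^{b'}$, so that
$$\tau^{2n_2}\sigma^b\tau^{2n_1} \;=\; \tau^{2n_2}\cdot \sigma^{2\epsilon m}\cdot\bigl(\sigma^{b'}\tau^{2n_1}\bigr).$$
Applied to the trivial diagram, the right-hand block (everything but the $\sigma^{2\epsilon m}$) produces $\mathcal{H}^{\pm}([2n_1,b',2n_2])$. The goal is to show that the remaining factor $\sigma^{2\epsilon m}$, once pushed past $\tau^{2n_2}$, realizes $\rho_\epsilon^m$ on the lifts of $\gamma_\epsilon$.

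The key locality observation is that $\sigma^{\pm 2}$ is a full Dehn twist supported in an arbitrarily small neighborhood of a horizontal arc $\gamma_0$ connecting a corresponding pair of $z$ and $w$ basepoints (see Figure~\ref{fig:halfDehntwist}), and the local model of the resulting transformation is by definition $\rho_\pm$. For any diffeomorphism $\psi$ and any $\phi$ supported in an open set $U$, the elementary identity $\psi\phi = (\psi\phi\psi^{-1})\psi$ holds, with $\psi\phi\psi^{-1}$ supported in $\psi(U)$ and realizing the same local model as $\phi$. Iterating yields
$$\tau^{2n_2}\,\sigma^{2\epsilon m} \;=\; \bigl(\tau^{2n_2}\sigma^{2\epsilon}\tau^{-2n_2}\bigr)^{m}\,\tau^{2n_2},$$
so $\mathcal{H}^{\pm}([2n_1,b,2n_2])$ is obtained from $\mathcal{H}^{\pm}([2n_1,b',2n_2])$ by applying $\rho_\epsilon^m$ in a neighborhood of $\tau^{2n_2}(\gamma_0)$ and each of its equivariant lifts in the cover $S^1\times\R$, hence in the universal cover $\R^2$.

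The remaining step is to identify $\tau^{2n_2}(\gamma_0)$. Using the universal-cover description of $\tau^{2n_2}$ as a vertical twist realized by finger moves (Section~\ref{subsec:algo}), a horizontal arc from a $z$-basepoint on $\ell_z$ to an adjacent $w$-basepoint on $\ell_w$ is transported to a straight line segment of slope $-2n_2$, matching the definition of $\gamma_+$ in Definition~\ref{def:gamma} for $b'\in\{0,1\}$. For the $b'=-1$ case, I would first use relation~\eqref{eq:relation2} to rewrite $[2n_1,-1,2n_2]=[2(n_1-1),1,2(n_2-1)]$; the analogous transport then produces arcs of slope $-2(n_2-1)$, matching $\gamma_-$. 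The main technical obstacle lies in this slope computation, which requires a careful hands-on check using the explicit finger-move description on the universal cover, together with verifying that the preferred lift conventions propagate consistently through the conjugation.
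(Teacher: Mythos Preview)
Your argument is correct and follows essentially the same route as the paper: factor $\sigma^b=\sigma^{2\epsilon m}\sigma^{b'}$, use the locality of $\sigma^{\pm 2}$ to move it past $\tau^{2n_2}$, and identify the transported arc with $\gamma_\pm$. Your conjugation identity $\tau^{2n_2}\sigma^{2\epsilon m}=(\tau^{2n_2}\sigma^{2\epsilon}\tau^{-2n_2})^m\tau^{2n_2}$ is a cleaner formulation of what the paper states more informally as ``we may reverse the order of $\sigma^2$ and $\tau^{2n_2}$.'' For the $b'=-1$ case the paper adds one small point you gesture at but do not state: after pulling $\widetilde\beta$ tight, the diagrams $\mathcal H^{\pm}([2n_1,-1,2n_2])$ and $\mathcal H^{\pm}([2(n_1-1),1,2(n_2-1)])$ literally coincide, which is what makes the slope $-2(n_2-1)$ for $\gamma_-$ consistent with the $b'=1$ definition.
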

\begin{proof}
    The case when $b'=0$ or $1$ follows from the fact that $\tau^{2n_2}\sigma^{b}\tau^{2n_1} = \sigma^{2m}\tau^{2n_2}\sigma^{b'}\tau^{2n_1}$ and the image of $\gamma$ under $\tau^{2n_2}$ is $\gamma_+$. Consider the case when $b'=-1$.
   We already know that
   \[
 K^{\pm}([2n_1,-1,2n_2]) = K^{\pm}([2(n_1-1),1,2(n_2-1)]).
\]
In fact, one can check that after pulling tight $\bb$,  $\H^{\pm}([2n_1,-1,2n_2])$ and $\H^{\pm}([2(n_1-1),1,2(n_2-1)])$ represent the same diagram. This justifies the definition of $\gamma_-$ in this case and the rest of the proof follows.
\end{proof}
The map defined in Lemma \ref{le:sigma2} is equivariant; denote them by $\sigma^{\pm 2m}$ respectively. They induce maps on the knot Floer chain complex (also denoted by $\sigma^{\pm 2m}$), which turn out to be encoded by certain $+$ and $-$ markings over the basis elements. 

We describe the process of assigning  $+$ markings, the $-$ case is parallel.
\begin{definition}\label{def:marked}
For a given  $\gamma_{+}$, after pulling tight $\bb$, let $a_{+}=\gamma_{+} \cap \aa $ and  $b_{+}=\gamma_{+} \cap \bb$. Both $a_{+}$ and $b_{+}$ are unique if they exist, as can be seen from the local picture Figure \ref{fig:halfDehntwist}\subref{subfig:halfDehntwist0}. 
From Figure \ref{fig:halfDehntwist}\subref{subfig:halfDehntwist4} we can see that $\rho_{+}$ does not affect the knot Floer complex if $\aa \cap \gamma_{+} = \varnothing.$ We give a  $+$  marking to the ``closest'' intersection point $p$ of $\aa \cap \bb$ to $a_+$ or $b_+$. Concretely, $p$ is the unique intersection point that forms a $(\alpha,\beta,\gamma)$ triangle with $a_{+}$ and $b_{+}.$
We call $p$ a $+$ marked point.

 For the chain complex $C=\CFKi(S^3,K^{\pm}([2n_1,b,2n_2]))$ with the basis $B$ induced by $\aa$ and $\bb,$  assign markings for every $\gamma_{+}$. This results in  a $+$ marked basis $B_+.$ Note that it is possible for a basis element to be assigned more than one $+$ markings.

 Similarly define intersection points $a_-, b_-, $ the $ -$ marked points and $-$ marked basis $B_-.$
\end{definition}
Let $D_1$ be the filtered chain complex over $\F[U,U^{-1}]$ generated by $x_0,x_1,x_2$ and $y$ with differentials (each with length one) as below
\[
\xymatrix{
x_{0} \ar[d] & x_1 \ar[d] \ar[l]   \\
y  & x_2 \ar[l] 
}
\]
Namely, $D_1$ is a length one box summand.
\begin{lemma}\label{le:sigma2+}
    For the knot Floer complex with marked basis  $(\CFKi(K^{\pm}([a_1,a_2,a_3])),B_+)$ where $a_1$ and $a_3$ even and $a_2 \geq 0,$ the map \[
    \sigma^2 \co (\CFKi(K^{\pm}([a_1,a_2,a_3])),B_+) \longrightarrow (\CFKi(K^{\pm}([a_1,a_2+2,a_3])),B'_+)\] 
    is defined as follows. For each $+$ marked point $p$, add $\l$ copies of $D_1$ summands, where $\l$ is the number of $+$ markings of $p,$ such that for each $D_1$ summand, $y$ and $p$ share the same filtration level and Maslov grading. Remove all the previous markings and  assign a $+$ marking to $x_1$ of each added $D_1$ summand. This gives the new marked basis $B'_+$. 
\end{lemma}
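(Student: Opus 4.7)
The plan is to invoke Lemma \ref{le:sigma2} to reduce the effect of $\sigma^2$ on the Heegaard diagram to a single application of the local transformation $\rho_+$ at each arc $\gamma_+$, and then read off the change to the knot Floer complex directly from the local picture in Figure \ref{fig:halfDehntwist}\subref{subfig:halfDehntwist12}. Indeed, passing from parameter $a_2$ to $a_2+2$ increases the exponent $m$ in Lemma \ref{le:sigma2} by one while leaving $b'$ unchanged, so $\sigma^2$ is implemented on the lifted diagram by performing one additional $\rho_+$ at every lift of $\gamma_+$. Because $\rho_+$ is supported inside a disk neighborhood $U_{\gamma_+}$ of $\gamma_+$, my first step is to choose the $U_{\gamma_+}$'s (one for each $\gamma_+$ in the universal cover) to be pairwise disjoint and to meet no intersection points of $\aa \cap \bb$ other than those they create. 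This localization lets me conclude that every generator of $B_+$ outside the $U_{\gamma_+}$'s and every bigon avoiding them is preserved verbatim under $\sigma^2$, so the total effect decomposes as a direct sum of independent local contributions, one per $\gamma_+$.

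I would then analyze the local picture inside a single $U_{\gamma_+}$. If $\aa \cap \gamma_+ = \varnothing$, the action of $\rho_+$ is trivial on the complex and nothing is added, which is consistent with the fact that no marking was assigned for such a $\gamma_+$ in Definition \ref{def:marked}. Otherwise $a_+$ and $b_+$ both exist, and $\rho_+$ inserts a small loop of $\bb$ encircling one of the basepoints in $U_{\gamma_+}$; by direct inspection of the local model, $\aa$ acquires exactly four new intersection points with the perturbed $\bb$, which I label $x_0, x_1, x_2, y$ so that $x_1$ sits nearest to the new image of $b_+$, $y$ is the point diagonally opposite, and $x_0, x_2$ occupy the remaining corners. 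These four points bound exactly four short bigons inside $U_{\gamma_+}$, two of them crossing only the $z$-basepoint once and two crossing only the $w$-basepoint once, giving precisely the box differential defining $D_1$. A triangle count on the universal cover then identifies $y$ as the vertex completing the $(\alpha, \beta, \gamma)$-triangle with the translated pair $a_+, b_+$, which implies that $y$ inherits the bifiltration and Maslov grading previously carried by $p$.

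The main obstacle I anticipate is the bookkeeping when a single intersection point $p$ receives $\ell > 1$ markings from distinct arcs $\gamma_+^{(1)}, \ldots, \gamma_+^{(\ell)}$. Here I would show that these arcs can be perturbed so their tubular neighborhoods are mutually disjoint while still completing the same triangle at $p$; this makes the $\ell$ local contributions independent, yielding $\ell$ disjoint $D_1$ summands each with a $y$-generator of the bigrading and Maslov grading of $p$. Finally, to verify the new marking rule defining $B'_+$, I would check that the slope-$(-2n_2)$ segment $\gamma_+$ prescribed by Definition \ref{def:gamma} inside $\H^{\pm}([a_1, a_2+2, a_3])$ now crosses the newly inserted loop in such a way that its unique triangle-completing intersection is exactly $x_1$; hence the $+$ marking is transferred from $p$ to $x_1$ in each of the $\ell$ new summands, producing the marked basis $B'_+$ as described. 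The case $a_2 < 0$ follows by the symmetric argument with $\rho_-$ and $\gamma_-$.
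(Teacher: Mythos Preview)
There is a genuine gap in your local analysis. The strand of $\bb$ on which $\rho_+$ acts is precisely the one through $b_+$, hence the one meeting $\aa$ at the marked point $p$; you cannot choose $U_{\gamma_+}$ to avoid $p$ while still containing the support of $\rho_+$. In the correct local model (Figure~\ref{fig:sigma2}\subref{subfig:sigma2_0}) the single point $p$ is replaced by \emph{five} intersection points $p_0,c,a,b,p_1$, and the resulting local complex (Figure~\ref{fig:sigma2}\subref{subfig:sigma2_1}) has six short bigons, with $\partial a=b+c$ and $\partial b=\partial c=p_0+p_1$. Both $p_0$ and $p_1$ lie at the filtration level and Maslov grading of $p$, and both inherit every external bigon that $p$ had. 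Consequently no four of these points form a direct summand on the nose: one needs the filtered change of basis $\{p_0,p_1\}\mapsto\{p_0+p_1,\,p_i\}$ (for either $i=0$ or $1$), after which the external arrows on $p_0+p_1$ cancel in pairs, $\{a,b,c,p_0+p_1\}\cong D_1$ splits off, and the remaining generator $p_i$ alone carries the role of the old $p$. Your claim of ``exactly four short bigons'' misses the two bigons from $b$ and $c$ to the second copy of $p$, and without the change of basis the box is not a summand.

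A related confusion appears in your identification of $y$: the $(\alpha,\beta,\gamma)$-triangle with $a_+$ and $b_+$ singles out the \emph{new marked point}, which in the local labeling is $a$ (i.e.\ $x_1$ in the $D_1$ notation), not $y$. That $y$ shares the filtration level and Maslov grading of $p$ is a consequence of the change-of-basis identity $y=p_0+p_1$, not of any triangle count.
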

\begin{proof}
    Given a intersection point $b_+ = \gamma_+ \cap \bb$ for some $\gamma_+,$ let $p$ be the $+$ marked point induced by $\gamma_+.$ In a small neighbourhood of $\gamma_+,$ the transformation $\rho_+$ is given by Figure \ref{fig:sigma2}\subref{subfig:sigma2_0}, up to rotation in $\R^2.$ In the resulting diagram, label the intersection points by $p_0, c, a, b$ and $p_1$ from left to right (as we will see, this assignment also only matters up to the symmetry from the middle). They generate a complex depicted in Figure \ref{fig:sigma2}\subref{subfig:sigma2_1}. In particular, we can perform a change of basis $\{a, b, c, p_0, p_1\} \mapsto \{a, b, c, p_0 + p_1, p_i\}$ where $i=0$ or $1.$ Both choice of the change of basis splits off a summand generated by $\{a, b, c, p_0 + p_1\}$ isomorphic to $D_1.$  We then identify either $p_0$ or $p_1$ with $p$. For $i=0,1,$  clearly $p_0+p_1$ share the same filtration level and Maslov grading with $p_i$; moreover  observe that  $p_i$ inherits all the bigons of $p$ (either incoming or outgoing, with the same filtration shifts). Adopting the perspective of Figure \ref{fig:halfDehntwist}\subref{subfig:halfDehntwist4}, performing a counterclockwise finger move will undo the transformation $\rho_+$ on the diagram level, and remove a box summand on the chain complex level. The choice of basepoint with which we perform the finger move corresponds to identifying $p$ with $p_0$ or $p_1.$  

    Over the diagram  $\H^{\pm}([a_1,a_2+2,a_3])$, following $\bb$ and record the sequence of $b_+$ in order.  Perform the finger move in  Figure \ref{fig:halfDehntwist}\subref{subfig:halfDehntwist4} near each $b_+$ (this amounts to removing an interval of $\bb$ near $b_+$ and gluing it back). The resulting diagram is $\H^{\pm}([a_1,a_2,a_3]).$ This proves the statement regarding the chain complex. For the new marked basis $B'_+,$ simply notice that in the local picture  Figure \ref{fig:sigma2}\subref{subfig:sigma2_0}, the intersetion point $a$ forms a triangle with $a_+$ and $b_+.$
\end{proof}
\begin{figure}[htb!]
\begin{minipage}{0.5\linewidth}
\centering
\subfloat[]{
\begin{tikzpicture}
    \begin{scope}[thin, black!0!white]
	  \draw  (-6, 0) -- (3,0);
     \end{scope}
     \node at (-3.2,-0.2) {\tiny $p$};
          \node at (0.28,0.15) {\tiny $b$};
         \node at (-0.75,0.15) {\tiny$c$};
         \node at (0.95,-0.15) {\tiny$p_1$};
           \node at (-0.28,-0.15) {\tiny$a$};
                 \node at (-1.15,-0.15) {\tiny$p_0$};
    \node at (0,0) {\includegraphics[scale=2]{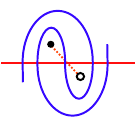}};
     \draw [thick,-latex'] (-2.2, 0) -- (-1.6, 0);
     \node at (-3,0) {\includegraphics[scale=2]{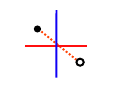}};
\end{tikzpicture}
\label{subfig:sigma2_0}
}
\end{minipage}%
\begin{minipage}{0.5\linewidth}
\centering
\subfloat[]{
\begin{tikzpicture}
    \begin{scope}[thin, black!0!white]
	  \draw  (-5, 0) -- (5,0);
      \end{scope}
       \begin{scope}[thin, black!30!white]
	  \foreach \i in {-1,...,1}
   {
              \draw  (-1.5, \i) -- (1.5, \i);
      	 \draw  (\i, -1.5) -- (\i,1.5);       
       }
      \end{scope}
   \filldraw (-0.5, 0.5) circle (2pt) node[] {};
      	\filldraw (0.5, 0.5) circle (2pt) node[] {};
       \filldraw (0.5, -0.5) circle (2pt) node[] {};
       \filldraw (-0.5, -0.5) circle (2pt) node[] {};
           \filldraw (-0.8, -0.8) circle (2pt) node[] {};
      \draw [thick,-latex'] (0.5, 0.5) -- (0.5, -0.4);
      \draw [thick,-latex'] (0.5, 0.5) -- (-0.4, 0.5);
        \draw [thick,-latex'] (0.5, -0.5) -- (-0.4, -0.5);
         \draw [thick,-latex'] (-0.5, 0.5) -- (-0.5, -0.4);
           \draw [thick,-latex'] (0.5, -0.5) -- (-0.7, -0.8);
         \draw [thick,-latex'] (-0.5, 0.5) -- (-0.8, -0.7);
         \node at (0.7,0.7) {$a$};
         \node at (-0.7,0.7) {$b$};
         \node at (0.7,-0.7) {$c$};
           \node at (-0.2,-0.2) {$p_1$};
                 \node at (-1,-1) {$p_0$};
\end{tikzpicture}
\label{subfig:sigma2_1}
}
\end{minipage}\\
\begin{minipage}{1\linewidth}
\centering
\subfloat[The effect of the map $\sigma^{2m}$.]{
\begin{tikzpicture}
    \begin{scope}[thin, black!0!white]
	  \draw  (-6, 0) -- (6,0);
      \end{scope}
            \filldraw (-0.7, -0.7) circle (2pt) node[] {};
   \filldraw (-0.5, 0.5) circle (2pt) node[] {};
      	\filldraw (0.5, 0.5) circle (2pt) node[] {};
       \filldraw (0.5, -0.5) circle (2pt) node[] {};
       \filldraw (-0.5, -0.5) circle (2pt) node[] {};

      \draw [thick,-latex'] (0.5, 0.5) -- (0.5, -0.4);
      \draw [thick,-latex'] (0.5, 0.5) -- (-0.4, 0.5);
        \draw [thick,-latex'] (0.5, -0.5) -- (-0.4, -0.5);
         \draw [thick,-latex'] (-0.5, 0.5) -- (-0.5, -0.4);

            \filldraw (1, 2) circle (2pt) node[] {};
      	\filldraw (2, 2) circle (2pt) node[] {};
       \filldraw (2, 1) circle (2pt) node[] {};
       \filldraw (1, 1) circle (2pt) node[] {};

      \draw [thick,-latex'] (2, 2) -- (2, 1.1);
      \draw [thick,-latex'] (2, 2) -- (1.1, 2);
        \draw [thick,-latex'] (2, 1) -- (1.1, 1);
         \draw [thick,-latex'] (1,2) -- (1, 1.1);

\node [rotate = 45] at (0.75,0.75) {$\cdots$};
  \draw [decorate,decoration={brace,amplitude=4pt},xshift=0cm,yshift=0pt]
      (-0.65,0.55) -- (1.1,2.2) node [midway,left,xshift=0cm, yshift=0.4cm] {$m$ copies};
\end{tikzpicture}
\label{subfig:sigma2_2}
}
\end{minipage}
\caption{}
\label{fig:sigma2}
\end{figure}
\begin{lemma}\label{le:sigma2-}
    For the knot Floer complex with marked basis  $(\CFKi(K^{\pm}([a_1,a_2,a_3])),B_-)$ where $a_1$ and $a_3$ even and $a_2 \leq 0,$ the map 
    \[
     \sigma^{-2} \co (\CFKi(K^{\pm}([a_1,a_2,a_3])),B_-) \longrightarrow (\CFKi(K^{\pm}([a_1,a_2-2,a_3])),B'_-)\] 
    is defined as follows. For each $+$ marked point $p$, add $\l$ copies of $D_1$ summands, where $\l$ is the number of $-$ markings of $p,$ such that for each $D_1$ summand, $x_1$ and $p$ share the same filtration level and Maslov grading. Remove all the previous markings and  assign a $-$ marking to $y$ of each added $D_1$ summand. This gives the new marked basis $B'_-$. 
\end{lemma}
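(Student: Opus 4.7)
The plan is to mimic the proof of Lemma \ref{le:sigma2+} with $\rho_+$ replaced by $\rho_-$; by definition the latter is the reflection of the former across a vertical axis, so the local picture near each $-$ marked point $p$ induced by an arc $\gamma_-$ is a horizontal flip of Figure \ref{fig:sigma2}\subref{subfig:sigma2_0}. Applying $\rho_-$ would introduce five new intersection points in a neighborhood of $\gamma_-$, labeled $p_0, a, b, c, p_1$ in the reflected order, which generate a local complex that is the horizontal mirror of Figure \ref{fig:sigma2}\subref{subfig:sigma2_1}.

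Next I would perform the same change of basis $\{a, b, c, p_0, p_1\} \mapsto \{a, b, c, p_0 + p_1, p_i\}$ for $i = 0$ or $1$ used in the proof of Lemma \ref{le:sigma2+}, splitting off a summand generated by $\{a, b, c, p_0 + p_1\}$ isomorphic to $D_1$ and leaving a distinguished generator $p_i$ that inherits all bigons of $p$ (with the same filtration shifts). On the chain level $p_i$ may then be identified with $p$. The remaining content of the lemma is the identification of (i) which element of the split-off $D_1$ shares bifiltration and Maslov grading with $p$, and (ii) which element becomes the new $-$ marked point. Because $\rho_-$ reverses the ``above/below'' roles with respect to the $\gamma$ direction relative to $\rho_+$, both answers swap from what they were in Lemma \ref{le:sigma2+}: the generator $p_0 + p_1$ now sits at the same bifiltration and Maslov grading as $x_1$ rather than $y$, and the intersection point forming a $(\alpha, \beta, \gamma)$-triangle with $a_-$ and $b_-$ after the subsequent $\sigma^{-2}$ plays the role of $y$ rather than $x_1$. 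This accounts precisely for the modifications in the statement relative to Lemma \ref{le:sigma2+}.

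The main obstacle I anticipate is bookkeeping the reflection carefully enough to confirm that the directions of the differentials in the split-off summand genuinely assemble into $D_1$ as defined, rather than a variant with arrows reversed. A cleaner route may be to bypass a full reproof by invoking the mirror identification $K^{-}(p/q) = -K^{+}(-p/q)$ from \eqref{eq:mirror}: this exchanges $\sigma^{-2}$ acting on $K^{-}([a_1, a_2, a_3])$ with $\sigma^{2}$ acting on $K^{+}([-a_1, -a_2, -a_3])$, to which Lemma \ref{le:sigma2+} applies directly. One then need only track how the mirror symmetry acts on the marked basis and on $D_1$, noting that the mirror duality on $\CFKi$ swaps ``top'' and ``bottom'' of any $D_1$ summand, converting the $+$ marking on $x_1$ of Lemma \ref{le:sigma2+} into the $-$ marking on $y$ here.
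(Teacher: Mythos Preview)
Your proposal is correct and follows exactly the route the paper takes: the paper's entire proof of this lemma is the single sentence ``This is parallel to Lemma \ref{le:sigma2+},'' so your detailed reflection argument is in fact a fleshing-out of what the paper leaves implicit. Your alternative mirror-symmetry route via \eqref{eq:mirror} is a nice observation not in the paper, and it gives a clean conceptual reason for why the roles of $x_1$ and $y$ swap between the two lemmas.
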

\begin{proof}
    This is parallel to Lemma \ref{le:sigma2+}.
\end{proof}
Practically, we only need to consider the complex with marked basis $(\CFKi(K^{\pm}([2n_1,b',2n_2])),B_\pm)$ with $b'=\pm 1$ and $0$ and consider the map $\sigma^{\pm2m}.$ In general it is not difficult to determine $B_{\pm}$ using Definition \ref{def:gamma} and \ref{def:marked}. In particular, we have proved the following.
\begin{proposition} \label{prop:sigma2}
Up to chain homotopy equivalence,
\[
 \CFKi(S^3,K^{\pm}([2n_1,b,2n_2])) \cong \CFKi(S^3,K^{\pm}([2n_1,b',2n_2])) \oplus \bigoplus^{N}_{i=1} D_1,
\]
   where $b' = b -2\epsilon m$ for $\epsilon = \sgn(b), m=\floor{\epsilon b/2}$, $N$ is the number of makrings in the marked basis $B_{\epsilon}$ and $D_1$ is the length one box complex. 
\end{proposition}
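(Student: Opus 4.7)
The plan is to prove the proposition by induction on $m$, using Lemmas \ref{le:sigma2+} and \ref{le:sigma2-} to control one application of the elementary step $\sigma^{\pm 2}$ at a time.

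First, I would invoke Lemma \ref{le:sigma2}, which already identifies the Heegaard diagram $\H^{\pm}([2n_1, b, 2n_2])$ with the result of applying $\rho_\epsilon^m$ over all arcs $\gamma_\epsilon$ to the base-case diagram $\H^{\pm}([2n_1, b', 2n_2])$, where $\epsilon = \sgn(b)$ and $m = \lfloor \epsilon b / 2 \rfloor$. On the filtered chain complex level, this realizes the map $\sigma^{\pm 2m}$ as the $m$-fold composition of the elementary step $\sigma^{\pm 2}$, so it suffices to analyze the cumulative effect of $m$ consecutive applications of $\sigma^{\pm 2}$ starting from the base-case complex $(\CFKi(K^{\pm}([2n_1, b', 2n_2])), B_\epsilon)$.

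For a single elementary step, Lemma \ref{le:sigma2+} (respectively Lemma \ref{le:sigma2-}) prescribes the effect explicitly: for each marking in the current marked basis, one splits off a $D_1$ summand whose relevant vertex matches the filtration level and Maslov grading of the marked point, then passes to a new marked basis in which the markings are relocated onto the $x_1$-vertices of the freshly added $D_1$'s. Inducting on $m$, at each stage the complex acquires an additional bundle of $D_1$ summands, and the total accumulated over the $m$ iterations gives the claimed decomposition
\[
\CFKi(S^3, K^{\pm}([2n_1, b, 2n_2])) \cong \CFKi(S^3, K^{\pm}([2n_1, b', 2n_2])) \oplus \bigoplus_{i=1}^{N} D_1.
\]

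The main point requiring care, and what I expect to be the only real obstacle, is verifying that the $D_1$ summands split off at earlier stages of the induction remain summands under the next application of $\sigma^{\pm 2}$, so that the direct sum decomposition accumulates faithfully rather than being scrambled by later local transformations. This should follow from the local nature of $\rho_\epsilon$ (acting only in a neighbourhood of each arc $\gamma_\epsilon$) together with the observation in Lemmas \ref{le:sigma2+} and \ref{le:sigma2-} that the markings relevant to subsequent steps always live on the freshly added $D_1$'s rather than on previously split-off pieces, so that later iterations do not interact with the earlier $D_1$ summands. Once this bookkeeping is verified, the proposition follows immediately by induction.
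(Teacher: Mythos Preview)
Your proposal is correct and follows essentially the same approach as the paper, which treats the proposition as an immediate corollary of Lemmas~\ref{le:sigma2+} and~\ref{le:sigma2-} (the paper's proof is literally the sentence ``In particular, we have proved the following''). Your inductive framing simply makes explicit what the paper leaves implicit; the concern you flag about earlier $D_1$ summands surviving later applications of $\sigma^{\pm 2}$ is already handled by the very form of those lemmas, since each application produces the new complex as a genuine direct sum with the old one and relocates all markings onto the freshly added boxes, so no further bookkeeping is required.
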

For any $n_1$ and $n_2$, by Definition \ref{def:gamma}, $\gamma_+$ and $\gamma_-$ coincide for $\CFKi(K^{\pm}(S^3,[2n_1,0,2n_2]))$, so $B_+ = B_-.$ In this case it is easy to check that
    the map $\sigma^2$ in Lemma \ref{le:sigma2+}   and the map $\sigma^{-2}$ in Lemma \ref{le:sigma2-} induce the same map on the level of filtered chain homotopy type, even though they induce different basis in the image. In other words, we have the following.
    \begin{lemma} \label{le:2brelation}
Up to filtered chain homotopy equivalence, for $n_1,n_2,b \in \Z,$
\[
\CFKi(K^{\pm}(S^3,[2n_1,2b,2n_2])) \cong \CFKi(K^{\pm}(S^3,[2n_1,-2b,2n_2])).
\]
\end{lemma}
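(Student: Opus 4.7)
The plan is to apply Proposition~\ref{prop:sigma2} to both sides, so that $\CFKi(K^{\pm}([2n_1,\pm 2b,2n_2]))$ each decomposes as $\CFKi(K^{\pm}([2n_1,0,2n_2]))$ plus some number of $D_1$ summands, and then to check that the two resulting collections of $D_1$'s agree up to filtered chain homotopy equivalence. Without loss of generality take $b\ge 0$.

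By Definition~\ref{def:gamma}, the arcs $\gamma_+$ and $\gamma_-$ associated to $\H^{\pm}([2n_1,0,2n_2])$ are both straight segments of slope $-2n_2$ between the same pair of basepoints, so they coincide; hence the marked bases $B_+$ and $B_-$ of $\CFKi(K^{\pm}([2n_1,0,2n_2]))$ are identical, with every marking simultaneously a $+$- and a $-$-marking. Writing $N$ for this common total marking count, I would iterate Lemma~\ref{le:sigma2+} (resp.\ Lemma~\ref{le:sigma2-}) $b$ times to pass to $\H^{\pm}([2n_1,2b,2n_2])$ (resp.\ $\H^{\pm}([2n_1,-2b,2n_2])$). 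Each single application of $\sigma^{\pm 2}$ attaches one $D_1$ per marking and then reassigns exactly one new marking on that added summand, so the marking count is preserved under iteration; consequently both sides accumulate exactly $bN$ copies of $D_1$.

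The key step is to track where these summands land. Starting from a marked point at filtration $(p_i,p_j)$, one application of $\sigma^2$ places a $D_1$ whose $y$-vertex sits at $(p_i,p_j)$ and moves the marking up-and-right to the $x_1$-vertex at $(p_i+1,p_j+1)$; $b$ iterations yield a staircase of $b$ boxes whose $y$-vertices lie at $(p_i+a,p_j+a)$ for $a=0,\dots,b-1$. The $\sigma^{-2}$ iteration is the down-and-left mirror, producing $y$-vertices at $(p_i-a,p_j-a)$ for $a=1,\dots,b$. All of these vertices share the Alexander grading $p_j-p_i$, and since $U$ acts on $\CFKi$ as a simultaneous filtration shift by $(-1,-1)$, any two $D_1$ summands whose $y$-vertices share an Alexander grading are filter-isomorphic as $\F[U,U^{-1}]$-submodules. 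Summing over the $N$ initial markings produces two direct sum decompositions with filter-equivalent $D_1$ pieces, proving the claim.

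The main obstacle I anticipate is the bookkeeping of the previous paragraph---carefully tracking how the marking at each $p$ drifts under iterated $\sigma^{\pm 2}$ and where each new $D_1$ sits---together with confirming that Proposition~\ref{prop:sigma2} is genuinely a \emph{filtered} (and not merely a chain homotopy) decomposition. Once both collections of $D_1$'s are shown to populate the same diagonals, the $U$-equivariance of $\CFKi$ aligns them automatically and the equivalence follows.
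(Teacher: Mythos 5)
Your argument is correct and takes essentially the same route as the paper: the paper likewise observes that for $b'=0$ the arcs $\gamma_+$ and $\gamma_-$ coincide, so $B_+=B_-$, and that $\sigma^{2}$ and $\sigma^{-2}$ then induce the same map on the level of filtered chain homotopy type, the added $D_1$ summands differing only by an overall $U$-translation. One small point to make explicit: your ``same Alexander grading'' criterion for matching up the boxes also needs the Maslov gradings to align, which they do here because each iteration shifts the marked point's Maslov grading by $\pm 2$ in step with the $(\pm 1,\pm 1)$ filtration drift, exactly the shift absorbed by multiplication by $U^{\pm 1}$ over $\F[U,U^{-1}]$.
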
    
    Note that marked basis $B_{+}$ and $B_{-}$ are generally different for $\CFKi(K^{\pm}([2n_1,1,2n_2]))$ since   $\gamma_{+}$ and $\gamma_{-}$ have different slopes.
    
In view of Lemma \ref{le:relation},\ref{le:sigma2+}  \ref{le:sigma2-} and \ref{le:2brelation} 
we can reduce the $\l=3$ case to  some subcases as follows.  Suppose we are given  $[a_1,a_2,a_3]$  with $a_1, a_3 \in 2\Z$ and $a_2\in \Z$. If $a_2$ is even, then it suffice to consider $a_2>0$, and by mirroring we only need to consider $K^+$. If $a_2$ is odd, by mirroring we can guarantee that $a_1 >0$. Next reducing $a_2$ to the case $a_2=\pm 1$, we can further restrict to the case $a_2=1$ and consider both $B_+$ and $B_-$ on $\CFKi(S^3,K^{\pm}([a_1,1,a_3]))$. Finally note that we can require $a_3 \neq -2$.

In summary, we have the following five cases, that are the topics of the next five subsections, respectively. We record the conclusion of each subsection here for the reader's convenience. This, together with the $\l=1$ case completes the proof of Theorem \ref{thm:class}.
\begin{proposition}\label{prop:summary} 
According to the discussion above, the case $\l=3$ is fully classified by the following cases:
   for $n_1, n_2 \in \Z$ and integer $b>0,$
    \begin{enumerate}
    \item $K^{+}([2n_1,2b,2n_2])$, Corollary \ref{cor:2nsum};
\end{enumerate}
 for $n_1,n_2>0,$
    \begin{enumerate}[resume]
    \item $K^{+}([2n_1,1,2n_2])$, Proposition \ref{prop:class1};
     \item $K^{-}([2n_1,1,2n_2])$, Proposition \ref{prop:class2};
     \end{enumerate}
  and   for $n_1>0,n_2>1,$
      \begin{enumerate}[resume]
      \item $K^{+}([2n_1,1,-2n_2])$, Proposition \ref{prop:class3};
      \item $K^{-}([2n_1,1,-2n_2])$, Proposition \ref{prop:class4}.
\end{enumerate} 
\end{proposition}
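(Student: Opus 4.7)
The plan is to prove Proposition \ref{prop:summary} as a bookkeeping reduction: starting from an arbitrary $[a_1,a_2,a_3]$ with $a_1,a_3$ even, I would apply the relations already established in this subsection, together with the mirroring identity \eqref{eq:mirror}, to land in one of the five listed families. The natural case split is on the parity of $a_2$, so I would handle the even and odd cases separately.

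If $a_2=2b$ is even, Lemma \ref{le:2brelation} says the filtered chain homotopy type only depends on $|b|$, so I may take $b\geq 0$. The value $b=0$ gives $[a_1,0,a_3]=[a_1+a_3]$, already classified by Proposition \ref{prop:length1}, so I take $b>0$. Then \eqref{eq:mirror} replaces $K^-$ by $-K^+$ and reduces everything to case (1). If instead $a_2$ is odd, I would first apply Proposition \ref{prop:sigma2} repeatedly to slide $a_2$ down to $\pm 1$, accumulating a finite number of length-one box summands $D_1$ attached at the marked points of $B_{\pm}$; the relation \eqref{eq:relation2} then identifies the $a_2=-1$ case with the $a_2=+1$ case after shifting $a_1$ and $a_3$ by $2$, so I may assume $a_2=1$. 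The mirroring identity \eqref{eq:relation6} lets me further assume $a_1>0$, but it flips the surgery sign, so both $K^+$ and $K^-$ must be kept separately.

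It remains to cut away boundary values of the third entry. When $n_2=0$ the identity \eqref{eq:relation3} reduces to $\l=1$; when $n_2=-1$, applying \eqref{eq:relation1} to swap the outer entries and then \eqref{eq:relation4} or \eqref{eq:relation5} likewise collapses to $\l=1$. So I may assume $n_2>0$ or $n_2<-1$, and splitting on that sign together with the surgery sign produces precisely cases (2)--(5). Combined with the $\l=1$ classification from Section \ref{subsec:len1} as a base, this exhausts all inputs.

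The main obstacle, in my view, is not the reduction itself but the need to verify that each step truly respects filtered chain homotopy equivalence at the level of \emph{marked} bases, not just unmarked chain complexes. This matters because subsequent $\sigma^{\pm 2}$ moves consult the $\pm$-markings produced by earlier moves (per Definition \ref{def:marked}), so the order of reductions and the propagation of markings under \eqref{eq:relation1}, \eqref{eq:relation2} and \eqref{eq:relation6} must be carefully tracked. Once this is pinned down the proposition follows, and the substantive computational work is then carried out in Sections \ref{subsec:class0} through \ref{subsec:class4}, one case at a time.
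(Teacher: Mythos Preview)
Your proposal is essentially the same bookkeeping reduction the paper carries out in the discussion paragraph immediately preceding Proposition~\ref{prop:summary}, and it is correct in substance. One small ordering issue: you reduce $a_2$ to $+1$ via \eqref{eq:relation2} and \emph{then} mirror via \eqref{eq:relation6} to force $a_1>0$, but \eqref{eq:relation6} sends the middle entry back to $-1$, so as written your two normalisations undo each other. The paper sidesteps this by mirroring first (which preserves the parity of $a_2$) and only then reducing $a_2$ to $\pm 1$; equivalently you can simply re-apply \eqref{eq:relation2} once more after mirroring. Either way the fix is trivial, and your closing paragraph correctly flags the one genuine subtlety---that the reduction must track the marked bases $B_\pm$, which is exactly why the paper records both sets of markings in Propositions~\ref{prop:class1}--\ref{prop:class4}.
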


% The following is our main theorem in the classification:
% \classification*
\subsection{Case $K^{+}([2n_1,2b,2n_2])$ with $b>0$}\label{subsec:class0}
 Since $K^{+}([2n_1,0,2n_2]) = K^{+}([2n])$ for $n=n_1 + n_2,$ with $K^{+}([2n])$ already classified in Subsection \ref{subsec:len1}, the only extra data we require is a set of  markings on $\CFKi(S^3,K^{+}([2n]))$ (which as complexes of L-space knots,  admits a unique basis). 

\subsubsection{Marked basis for $K^{+}([-2n])$ with $n>0$ }
Fix $-n = n _1 + n_2$ and
 let $n_2 = -k $ for $k\in \Z$.  We seek to decide the marked basis corresponding to $ K^{+}([-2(n-k),2b,-2k])$ for $b>0$.  By Definition \ref{def:gamma} $\gamma_+$ is of slope $k.$  Revisit Figure \ref{fig:examplelen1}: in a $S^1\times \R$ slice, denote the lifts of $\alpha$ that intersect a chosen lift $\bb$ of $\beta$ by $\alpha_1$ through $\alpha_{n}$ from bottom to top. Fix a lift $\aa$ of $\alpha$.  It takes $n$ iterations  to cover all the intersections of $\aa \cap \bb,$ with $\aa$ being identified with $\alpha_\l$ for $1\leq \l \leq n$ in each iteration. In total there are $2n-1$ generators in the complex $\CFKi(S^3,K^{+}([-2n])) = \CFKi(S^3,T_{n,n+1}).$ Every iteration covers $2$ intersection points except the last one, which covers $1$ intersection point. 

 The only intersection points that are marked are the ones in the middle of each $S^1\times \R$ slice. We focus on the portion of $\bb$ that travels between $z$ and $w$ basepoints, which is isotopic to a slope $n$ line segment. The question of how many markings each intersection point receives is a completely combinatorial one: for each $\alpha_\l$ we need only count the number of line segments of slope $k$ that intersect both line segments of slope $n$ and slope $0$. 
 \begin{figure}[htb!]
     \centering
     \begin{tikzpicture}
         \node at (0,0) {\includegraphics[scale=0.3]{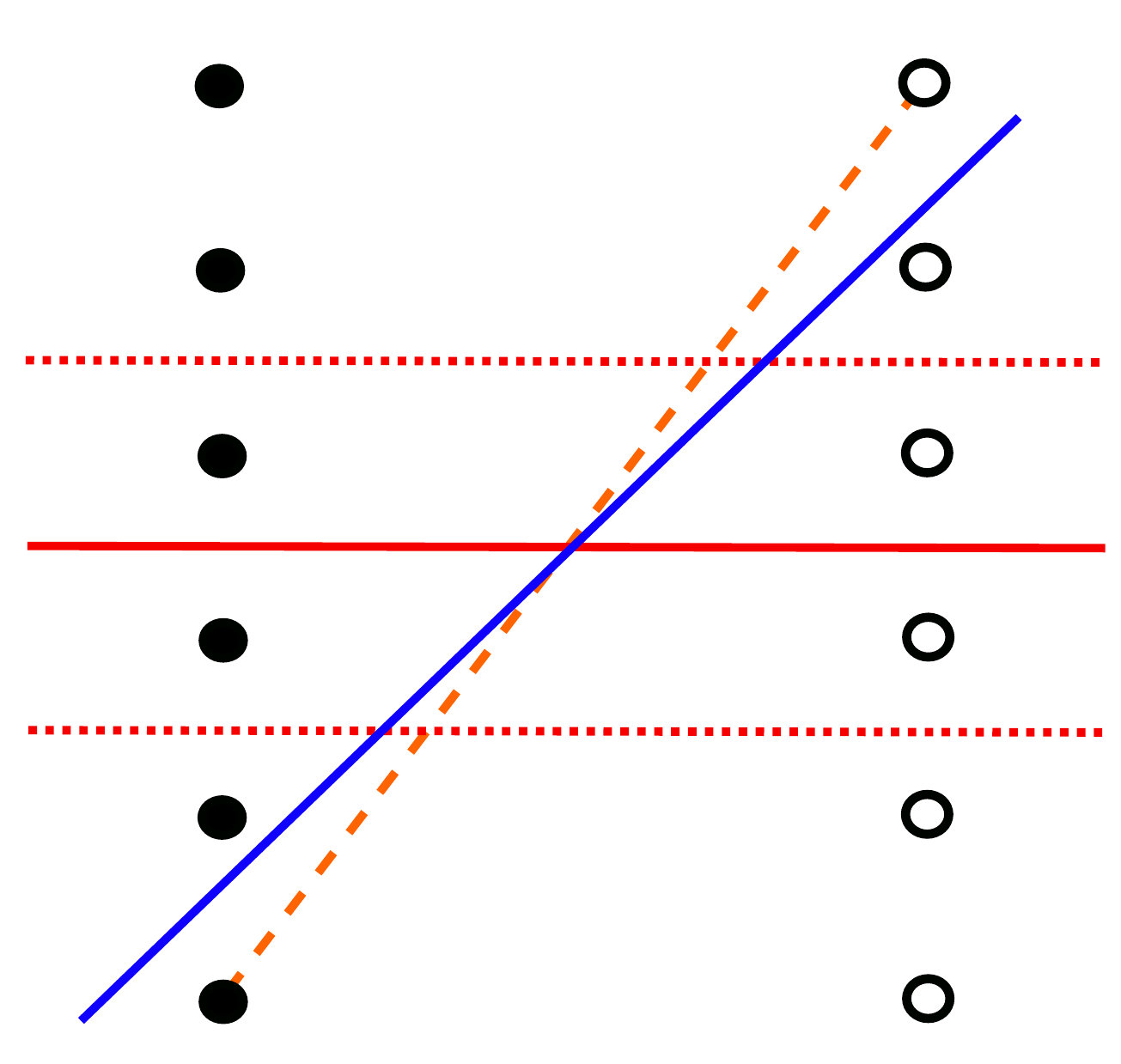}};
         \node [text=red] at (-2.2,-0.8) {$\alpha_1$};
          \node [text=red] at (-2.2,0) {$\alpha_2$};
               \node [text=red] at (-2.2,0.7) {$\alpha_3$};
                \node [text=Orange] at (0.5,1) {$\gamma_+$};
                    \node [text=blue] at (1.8,1.5) {$\bb$};
    \node  at (-1.5,-1.9) {\small $-1$};
      \node  at (-1.5,-1.1) {\small $0$};
      \node  at (-1.5,-0.4) {\small $1$};
     \node  at (-1.5,0.3) {\small $2$};
     \node  at (-1.5,0.95) {\small $3$};
     \node  at (-1.5,1.65) {\small $4$};
     \end{tikzpicture}
     \caption{The $\gamma_+$ line segment with slope $n+1$ intersects the $n$ sloped $\bb$ line segment once.}
     \label{fig:lineseg1}
 \end{figure}
 
 In the below proposition, note that $\CFKi(S^3,T_{n,n+1})$ admits a unique basis, and by the symmetry it does not matter from which end we start counting the generators.
\begin{proposition}\label{prop:2ncase1}
   For $n,b>0$ and $k\in \Z \setminus \{0,n\}$, corresponding to $ K^{+}([-2(n-k),2b,-2k])$, the $(2\l)$-th generator in $\CFKi(S^3,T_{n,n+1})$ receives $m(n,k,\l)$ markings for $1\leq \l \leq n-1$,  where 
   \begin{align}
     \label{eq:m} m(n,k,\l) = \begin{cases}
           k-n \hspace{5em} &\hspace{2em} k \geq n+1\\
           n-\l + \min\{\l-k,0\} + \min\{k+\l-n,0\} &\hspace{2em} 1\leq k \leq n-1\\
           -k  \hspace{5em} &\hspace{2em} k \leq -1.
       \end{cases}
   \end{align}
\end{proposition}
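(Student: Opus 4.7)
The plan is to reduce the claim to a combinatorial count of line-segment incidences in the universal cover, following the recipe outlined in the paragraph preceding the proposition. First, I would fix a fundamental strip $[0,1]\times \R$ and make precise the picture from Example \ref{ex:1}: after $\tau^{-2n}$ acts on the unknot diagram, the lift $\bb$ of $\beta$ contains a segment of slope $n$ passing through $n$ consecutive $z$--$w$ basepoint pairs, and the lifts of $\alpha$ meeting $\bb$ are horizontal lines $\alpha_1,\ldots,\alpha_n$ (ordered bottom-to-top). The generator $x_{2\ell}$ is the unique intersection $\alpha_\ell \cap \bb$ lying on this slope-$n$ segment.

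By Definition \ref{def:marked}, the number of $+$ markings on $x_{2\ell}$ equals the number of $(\alpha,\beta,\gamma_+)$-triangles with vertex $x_{2\ell}$. Because $\gamma_+$ is a slope-$k$ segment joining a $z$ to a $w$ basepoint, such a triangle exists precisely when some lift of $\gamma_+$ meets both $\alpha_\ell$ and the slope-$n$ portion of $\bb$. This reduces the proposition to an integer-point incidence count: how many lifts of $\gamma_+$ simultaneously cross the horizontal line $\alpha_\ell$ and the slope-$n$ segment of $\bb$.

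The three regimes in \eqref{eq:m} then correspond to three geometric regimes. When $k\geq n+1$ or $k\leq -1$, the slope of $\gamma_+$ is either strictly steeper than, or opposite in sign to, that of $\bb$, so once a $\gamma_+$ meets the slope-$n$ segment it automatically meets every $\alpha_\ell$ crossed by $\bb$; a direct lattice count then gives $k-n$ and $-k$, respectively, independently of $\ell$. The interesting regime is $1\leq k\leq n-1$: if there were no boundary effects there would be $n-\ell$ candidate $\gamma_+$'s incident to $\alpha_\ell$, but a candidate can miss the bottom of the slope-$n$ segment of $\bb$ in $\max\{k-\ell,0\}$ ways, and miss the top in $\max\{n-k-\ell,0\}$ ways. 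Subtracting these from $n-\ell$ and using $\min\{x,0\}=-\max\{-x,0\}$ yields the stated formula.

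The main obstacle will be the bookkeeping for the middle regime. One must verify both boundary-loss counts by comparing each slope-$k$ candidate's two endpoints against the two endpoints of the slope-$n$ segment of $\bb$, which requires drawing the universal cover (as in Figure \ref{fig:lineseg1}) and checking the small number of subcases determined by the signs of $\ell-k$ and $k+\ell-n$. The two extreme regimes then fall out as limits where one of the two $\min$ terms saturates and one of the defects becomes total, matching the stated constants $k-n$ and $-k$.
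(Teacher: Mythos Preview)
Your overall plan coincides with the paper's: reduce the marking count to the number of slope-$k$ segments $\gamma_+$ that meet both the slope-$n$ portion of $\bb$ and the slope-$0$ line $\alpha_\ell$, then do a case analysis on $k$. The paper carries this out by labeling the $z$/$w$ heights by integers $j$ (with $\alpha_\ell$ between $j=\ell-1$ and $j=\ell$) and computing, for the middle regime, the length of the interval $[k,k+\ell-1]\cap[\ell,n-1]$, which is exactly your inclusion--exclusion expression.

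There is, however, a concrete slip in your treatment of the regime $k\le -1$. You assert that ``once a $\gamma_+$ meets the slope-$n$ segment it automatically meets every $\alpha_\ell$,'' and that the resulting lattice count is $-k$. The implication actually goes the other way here: when $k<0$ the correct statement is that every $\gamma_+$ meeting $\alpha_\ell$ also meets the slope-$n$ segment, and it is the count of $\gamma_+$'s meeting $\alpha_\ell$ (namely those with start height $j\in[\ell,\ell-1-k]$) that equals $-k$. Your stated implication is false already for $n=3,\ k=-1,\ \ell=1$: the segment from $(z,2)$ to $(w,1)$ crosses the slope-$3$ piece of $\bb$ but lies entirely above $\alpha_1$. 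If you count $\gamma_+$'s hitting the slope-$n$ segment you get $n-2-k$, not $-k$. So for this case you must swap the roles of the two segments before counting; once you do, the argument goes through exactly as in the paper.

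A smaller point: in the middle regime your phrase ``$n-\ell$ candidate $\gamma_+$'s incident to $\alpha_\ell$'' is not literally right (there are $k$ segments crossing $\alpha_\ell$), and the second defect you subtract is really ``those whose start height exceeds $\ell-1$'' rather than ``miss the top of $\bb$''. Your arithmetic is correct, but when you write it up you should phrase it as the paper does---as the length of the intersection of two explicit integer intervals---to avoid this mismatch between the words and the numbers.
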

\begin{proof}
For $1\leq \l \leq n-1,$ in the $\l$-th iteration, $\alpha_\l$ is identified with $\aa$ and the intersection point depicted in Figure \ref{fig:lineseg1} is  the $(2\l)$-th generator. Label the height of the  $z$ and $w$ basepoints by $j$, such that $\alpha_\l$ is between $j=\l$ and $j=\l -1$.

   When $k\geq n+1,$ $\gamma_+$ intersects the line segment of slope $n$ if and only if it starts from $j\leq -1$ and ends at $j \geq n.$ There are $k-1 -n + 1 = k-n$ of such line segments in total.

   When $k\leq -1,$ $\gamma_+$ intersects the line segment of slope $0$ if and only if it starts from $j\geq \l$ and ends at $j \leq \l -1.$ There are $\l - 1 - (\l + k) +1   = -k$ of such line segments in total.

   When $1\leq k\leq n-1,$ $\gamma_+$ intersects both the line segments  if only if it starts between $0\leq j \leq \l -1$ and ends between $\l\leq j \leq n -1.$ In other words, we need only find the length of the interval $[k,k+\l-1]\cap [\l,n-1],$ which is given by
   \begin{align*}
       \min\{k+\l-1&,n-1\} - \max\{\l,k\} + 1 \\
       &= n-\l + \min\{\l-k,0\} + \min\{k+\l-n,0\}.
   \end{align*}
\end{proof}
Note that the expression $m(n,k,\l)$ is symmetric under the transformations
\begin{align*}
    (n,k,\l) &\longmapsto  (n,k,n- \l)  \\
    (n,k,\l) &\longmapsto (n,n- k,\l).
\end{align*}
\subsubsection{Marked basis for $K^{+}([2n])$ with $n>0$ } This is similar to the previous case.
Fix $n = n _1 + n_2$ and
 let $n_2 = -k $ for $k\in \Z$.  We seek to decide the marked basis corresponding to $ K^{+}([2(n+k),2b,-2k])$ for $b>0$.   Similarly label the lifts of $\alpha$ by $\alpha_1$ through $\alpha_{n-1}$ from bottom to top.   It takes $n-1$ iterations  to cover all the intersections of $\aa \cap \bb,$ with $\aa$ being identified with $\alpha_\l$ for $1\leq \l \leq n-1$ in each iteration. In total there are $2n-3$ generators in the complex $\CFKi(S^3,K^{+}([2n])) = \CFKi(S^3,T_{n,n-1}).$ Every iteration covers $2$ intersection points except the last one, which covers $1$ intersection point. 

 The only intersection points are the ones in the middle of each $S^1\times \R$ slice and the portion of $\bb$ that travels between $z$ and $w$ basepoints is of slope $-n$. By Definition \ref{def:gamma} $\gamma_+$ is of slope $k.$ The proof of the next proposition is similar to the previous case and left to the reader as an exercise.
 \begin{proposition}\label{prop:2ncase2}
   For $n,b>0$ and $k\in \Z \setminus \{0,-n\}$, corresponding to $ K^{+}([2(n+k),2b,-2k])$, the $(2\l -1)$-th generator in $\CFKi(S^3,T_{n,n-1})$ receives $m'(n,k,\l)$ markings for $1\leq \l \leq n-1$,  where 
   \begin{align}
     \label{eq:m'} m'(n,k,\l) = \begin{cases}
           -k-n \hspace{5em} &\hspace{2em} k \leq -n-1\\
           n-\l + \max\{\l-n-k,0\} + \max\{k+\l,0\} &\hspace{2em} -n+1\leq k \leq -1\\
           k  \hspace{5em} &\hspace{2em} k \geq 1.
       \end{cases}
   \end{align}
\end{proposition}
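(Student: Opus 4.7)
My plan is to adapt the proof of Proposition \ref{prop:2ncase1} to this reverse-orientation setting, so the overall structure is parallel and only the combinatorial bookkeeping changes.

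First, I would construct the $(1,1)$ diagram for $K^{+}([2n]) = -T_{n-1,n}$ using the algorithm of Section \ref{sec:blow}. Applying $\tau^{2n}$ to the $(1,1)$ diagram of the unknot produces $n$ finger moves in the opposite vertical direction from Example \ref{ex:1}, since the sign of the continued fraction entry is reversed. In the $S^{1}\times \R$ cover, the lifts of $\alpha$ that intersect a chosen lift $\bb$ of $\beta$ are $\alpha_{1},\ldots,\alpha_{n-1}$, one fewer than in the previous case. Fixing a lift $\aa$, covering $\aa\cap\bb$ requires $n-1$ iterations with $\aa$ identified with $\alpha_{\ell}$ in the $\ell$-th iteration; this yields $2n-3$ generators for $\CFKi(S^{3},T_{n,n-1})$, two per iteration except the final one.

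Second, as in the proof of Proposition \ref{prop:2ncase1}, Definition \ref{def:marked} together with the local picture of Lemma \ref{le:sigma2+} implies that the only candidate marked point in each slice is the middle intersection. Because the finger moves point in the opposite direction, the middle intersection is the $(2\ell-1)$-th generator rather than the $(2\ell)$-th. The number of markings it receives equals the number of lifts of $\gamma_{+}$ whose straight segment crosses both the portion of $\bb$ between $z$ and $w$ (here of slope $-n$ rather than the slope $n$ in the previous proof) and the slope-$0$ portion of $\bb$ bordering $\alpha_{\ell}$. By Definition \ref{def:gamma}, applied with $n_{2}=-k$, these $\gamma_{+}$ segments have slope $k$ in the normalization used in the preceding proof.

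Third, the counting splits along the three cases in \eqref{eq:m'}. Parametrize heights by $j$ so that $\alpha_{\ell}$ lies between $j=\ell-1$ and $j=\ell$. When $k\geq 1$, $\gamma_{+}$ only meets the slope-$0$ portion of $\bb$; counting the admissible starting and ending heights yields exactly $k$ segments. When $k\leq -n-1$, $\gamma_{+}$ only meets the slope-$(-n)$ portion, and a symmetric count yields $-k-n$. When $-n+1\leq k\leq -1$, a segment must meet both portions, so the count reduces to the length of the intersection of two integer intervals of the form $[k+\ell-1,\,\ell+n-2]\cap[0,\ell-1]$, which simplifies to $n-\ell+\max\{\ell-n-k,0\}+\max\{k+\ell,0\}$. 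The main obstacle is tracking the orientation conventions consistently: the mirror relationship between $[2n]$ and $[-2n]$ reverses the direction of the finger moves, flips the slope of the central portion of $\bb$, and shifts which extremal intersection in each slice is the ``middle'' one, which is precisely what changes the generator index from $2\ell$ to $2\ell-1$ and produces the apparent sign flip comparing $m(n,k,\ell)$ with $m'(n,k,\ell)$. Once the diagram is drawn correctly, the combinatorial count is structurally identical to Proposition \ref{prop:2ncase1}, and the symmetries $(n,k,\ell)\mapsto(n,k,n-\ell)$ and $(n,k,\ell)\mapsto(n,-n-k,\ell)$ of $m'$ provide a useful consistency check.
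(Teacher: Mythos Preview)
Your approach is exactly what the paper intends: the paper explicitly leaves this proof as an exercise parallel to Proposition~\ref{prop:2ncase1}, and your outline follows that template. Two small corrections are worth flagging. First, the ``slope-$0$ portion of $\bb$ bordering $\alpha_\ell$'' should simply be $\alpha_\ell$ itself; by Definition~\ref{def:marked} the marked point arises from the triangle with vertices $a_+=\gamma_+\cap\aa$ and $b_+=\gamma_+\cap\bb$, so one of the two line segments $\gamma_+$ must cross is $\aa=\alpha_\ell$, not a piece of $\bb$. Second, the explicit interval you write for the middle case, $[k+\ell-1,\,\ell+n-2]\cap[0,\ell-1]$, does not reduce to the stated formula: for example with $n=4$, $\ell=1$, $k=-2$ it has length $1$, whereas $m'(4,-2,1)=3$. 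The correct count, mirroring the proof of Proposition~\ref{prop:2ncase1}, comes from constraining the starting height of $\gamma_+$ by the position of $\alpha_\ell$ and the ending height by the slope-$(-n)$ segment of $\bb$; redoing that bookkeeping carefully with the reversed slope yields \eqref{eq:m'}. These are arithmetic slips rather than gaps in the strategy.
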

\subsubsection{Marked basis for $K^{+}([0])$ }
Again let $n_2 = -k$ for $k\in \Z.$
\begin{definition}\label{def:c0}
    Let $C_0$ be the complex generated by one element. 
\end{definition}
We seek to decide the marked basis in $C_0 $  corresponding to $K^{+}([2k,2b,-2k])$ for $b>0$. Similar analysis as from the previous sections shows that the unique generator in $C_0$ is assigned $|k|$ markings for $k\in \Z$. This concludes the discussion of the case $K^{+}([2n_1,2b,2n_2])$ with $b>0$.
In particular, Proposition \ref{prop:2ncase1}, \ref{prop:2ncase2} and above imply the following.
\begin{corollary}\label{cor:2nsum}
    For any $n_1,n_2 \in \Z_{\neq 0}$  and $b>0,$ up to chain homotopy equivalence $\CFKi(S^3,K^{+}([2n_1,2b,2n_2]))$ is given by
    \begin{itemize}
        \item  if $n_1 + n_2 =0$,
        \[
        C_0 \oplus b|n_2| D_1;
        \]
        \item if $n_1 + n_2 >0$, denoting $n= n_1 + n_2$ and $ k=-n_2,$
        \[
        \CFKi(S^3,T_{n,n-1}) \oplus b \left(\sum_{\l=1}^{n-1} m'(n,k,\l)\right) D_1,
        \]
        where $m(n,k,\l)$ is given by \eqref{eq:m'};
        \item if $n_1 + n_2 <0$, denoting $n= - (n_1 + n_2)$ and $ k=-n_2,$
        \[
        \CFKi(S^3,T_{n,n+1}) \oplus b\left(\sum_{\l=1}^{n-1} m(n,k,\l)\right) D_1,
        \]
        where $m(n,k,\l)$ is given by \eqref{eq:m}.
    \end{itemize}
\end{corollary}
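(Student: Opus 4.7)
The plan is to assemble Corollary \ref{cor:2nsum} directly from the three ingredients already in place: the $\sigma^2$-reduction of Proposition \ref{prop:sigma2}, the identification of $K^{+}([2n])$ via Proposition \ref{prop:length1}, and the marking counts computed in Propositions \ref{prop:2ncase1}, \ref{prop:2ncase2} (together with the one-paragraph discussion of the marking on $C_0$). Corollary \ref{cor:2nsum} is essentially the bookkeeping that packages these results.

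First I would apply Proposition \ref{prop:sigma2} with the middle parameter taken to be $2b$: here $\epsilon = +1$, $m = b$, and the reduced exponent is $b' = 0$, so
\[
\CFKi(S^3, K^{+}([2n_1, 2b, 2n_2])) \;\cong\; \CFKi(S^3, K^{+}([2n_1, 0, 2n_2])) \;\oplus\; \bigoplus_{i=1}^{N} D_1,
\]
where $N$ is the total number of $D_1$ summands accumulated through the $b$ successive applications of $\sigma^2$. Since the continued fraction relation gives $[2n_1, 0, 2n_2] = 2(n_1+n_2)$, we have $K^{+}([2n_1, 0, 2n_2]) = K^{+}([2(n_1+n_2)])$. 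Proposition \ref{prop:length1} (supplemented by the unknot case) then identifies this knot, producing the base complex in each of the three cases: $C_0$ if $n_1+n_2=0$, the staircase complex of $T_{n,n-1}$ if $n_1+n_2 = n > 0$, and that of $T_{n,n+1}$ if $n_1+n_2 = -n < 0$.

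For the count $N$, I would invoke Lemma \ref{le:sigma2+} iteratively. Each application of $\sigma^2$ adds one $D_1$ summand for each $+$ marking in the current marked basis (counted with multiplicity), and then transfers a single new $+$ marking onto the $x_1$ generator of each newly inserted summand. Consequently the total marking count is invariant under $\sigma^2$, and after $b$ iterations the number of added summands is exactly $b \cdot M$, where $M$ is the marking count of the initial complex $\CFKi(S^3, K^{+}([2(n_1+n_2)]))$ with respect to the marked basis induced by the arcs $\gamma_+$ of slope $-2n_2 = 2k$. Propositions \ref{prop:2ncase1} and \ref{prop:2ncase2} evaluate $M$ as $\sum_{\ell=1}^{n-1} m(n,k,\ell)$ and $\sum_{\ell=1}^{n-1} m'(n,k,\ell)$ in the two torus knot cases, and the discussion preceding the corollary gives $M = |k| = |n_2|$ in the $C_0$ case. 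Substituting these into the display yields exactly the three bullet points of the corollary.

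The one subtle point I would want to verify carefully is the persistence of the marked basis through the $b$ iterations: after applying $\sigma^2$ once, the original generators of $\CFKi(S^3, K^{+}([2(n_1+n_2)]))$ have all their $+$ markings removed (by the second sentence of Lemma \ref{le:sigma2+}), and the only remaining $+$ marked points live on the $x_1$ generators of the freshly added $D_1$ summands. This is what guarantees the invariance of $M$ under iteration, and hence the clean coefficient $b$ in each of the three formulas. No new calculation is required beyond this verification — the corollary is a packaging statement, and the main work has been carried out in the preceding propositions.
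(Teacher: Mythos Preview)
Your proposal is correct and matches the paper's approach: the paper's proof is the single sentence ``Proposition \ref{prop:2ncase1}, \ref{prop:2ncase2} and above imply the following,'' and you have accurately reconstructed the implicit argument, including the key observation (from Lemma \ref{le:sigma2+}) that the total marking count is preserved under each application of $\sigma^2$, yielding the factor $b$. The only minor slip is cosmetic: the slope of $\gamma_+$ in the paper's convention is $k$, not $2k$, but this does not affect the argument since you only invoke the marking counts already computed in the propositions.
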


\subsection{Case $K^{+}([2n_1,1,2n_2])$ with $n_1,n_2>0$}\label{subsec:class1}
  From now on let us write $K^{+}_{n_1,n_2}$ for the knot $K^{+}([2n_1,1,2n_2])$ for the simplicity of the notation. See Figure \ref{fig:Ln1n2} for a depiction of the knot $K^{+}_{n_1,n_2}$.  Since  the corresponding rational tangle has a presentation $[2n_1,1,2n_2]$,  by Jonathan Hales's algorithm we shall consider the action $\tau^{2n_2}\sigma \tau^{2n_1}$ on the $(1,1)$ diagram. The entire procedure is shown in Figure \ref{fig:11diagram}.

\begin{figure}[htb!]
\labellist
  \pinlabel {$1$}  at 445 210

 \pinlabel {{$n_1$}}  at 128 156
  \pinlabel {$n_2$}  at 315 158
\endlabellist
\includegraphics[scale=0.5]{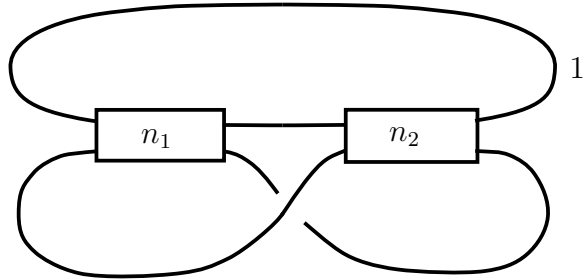}
\caption{The knot  $K^{+}_{n_1,n_2}$. Numbers in the boxes indicate the number of full-twists.  }
\label{fig:Ln1n2}
\end{figure}
\begin{figure}
\begin{tikzpicture}
 \begin{scope}[thin, black!0!white]
          \draw  (0, 0) -- (18, 0);
      \end{scope}
\centering
 % \draw [decorate,decoration={brace,amplitude=4pt},xshift=0.5cm,yshift=0pt]
 %      (0,1) -- (0,-1) node [midway,right,xshift=.1cm] {$MM$};
       \draw [decorate,decoration={brace,amplitude=4pt,mirror},xshift=0.5cm,yshift=0pt]
      (3.5,1.2) -- (3.5,-0.4) node [midway,left,xshift=-0.08cm,yshift=6pt] {$n_1$};
\node at (-0.5,0.1) {\includegraphics[scale=0.34]{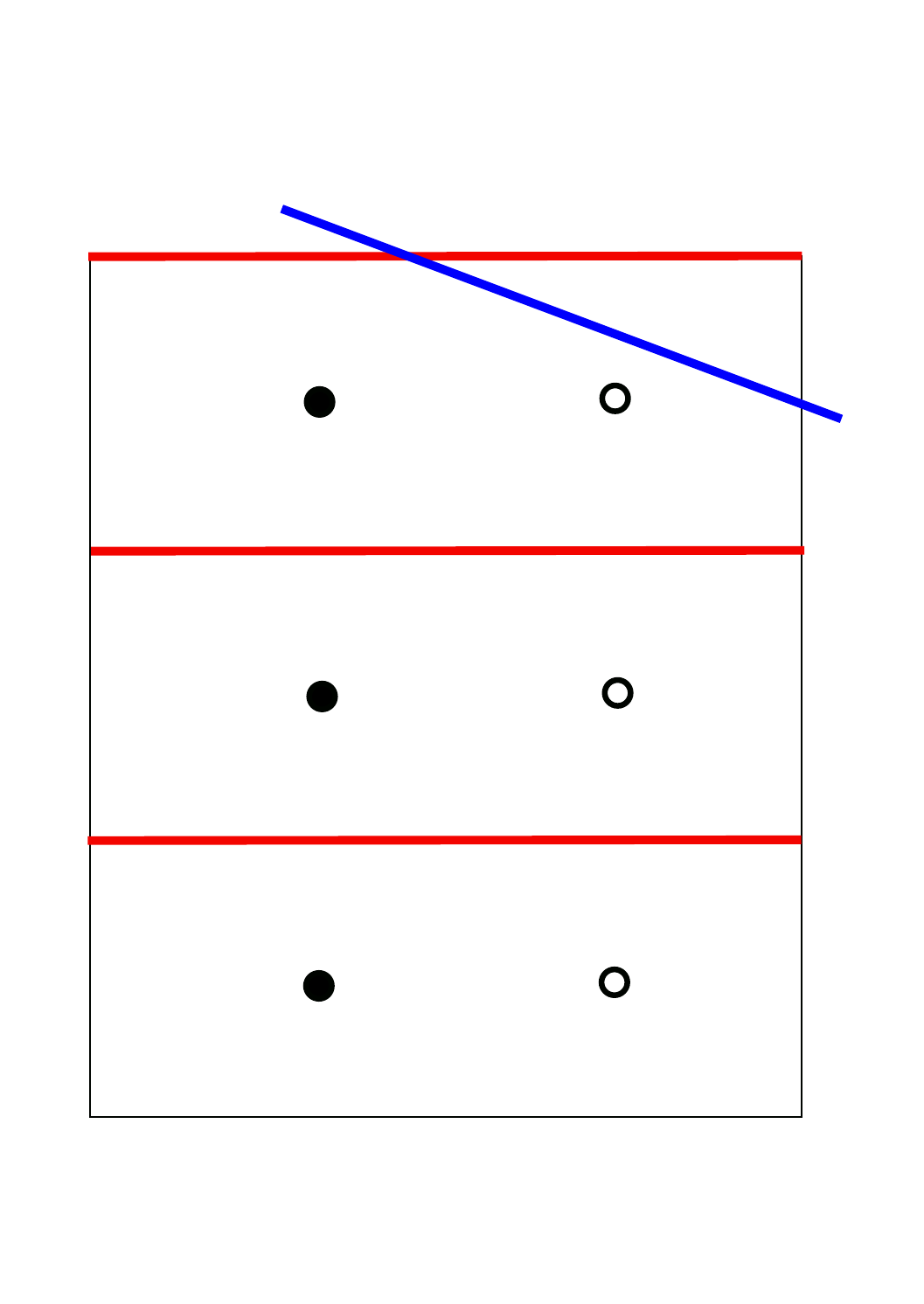}};
\node at (1.7,0.35) {\small{$\tau^{2n_1}$}};
\draw [->,very thick] (1.3,0) -- (2,0);
\node at (4,0) {\includegraphics[scale=0.35]{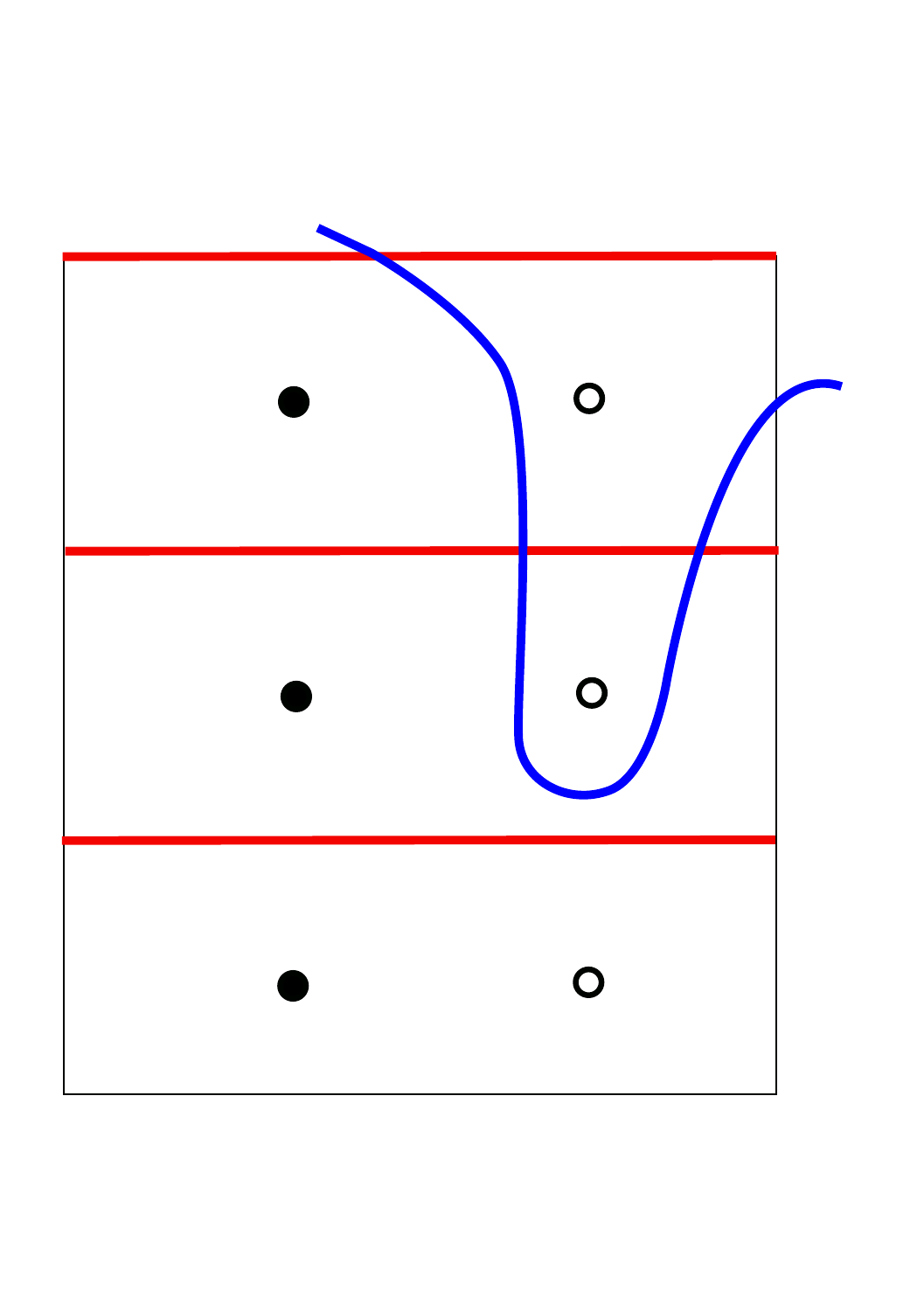}};
\node at (5.92,0.28) {\small{$\sigma$}};
\draw [->,very thick] (5.7,0) -- (6.2,0);
\node at (8,0) {\includegraphics[scale=0.35]{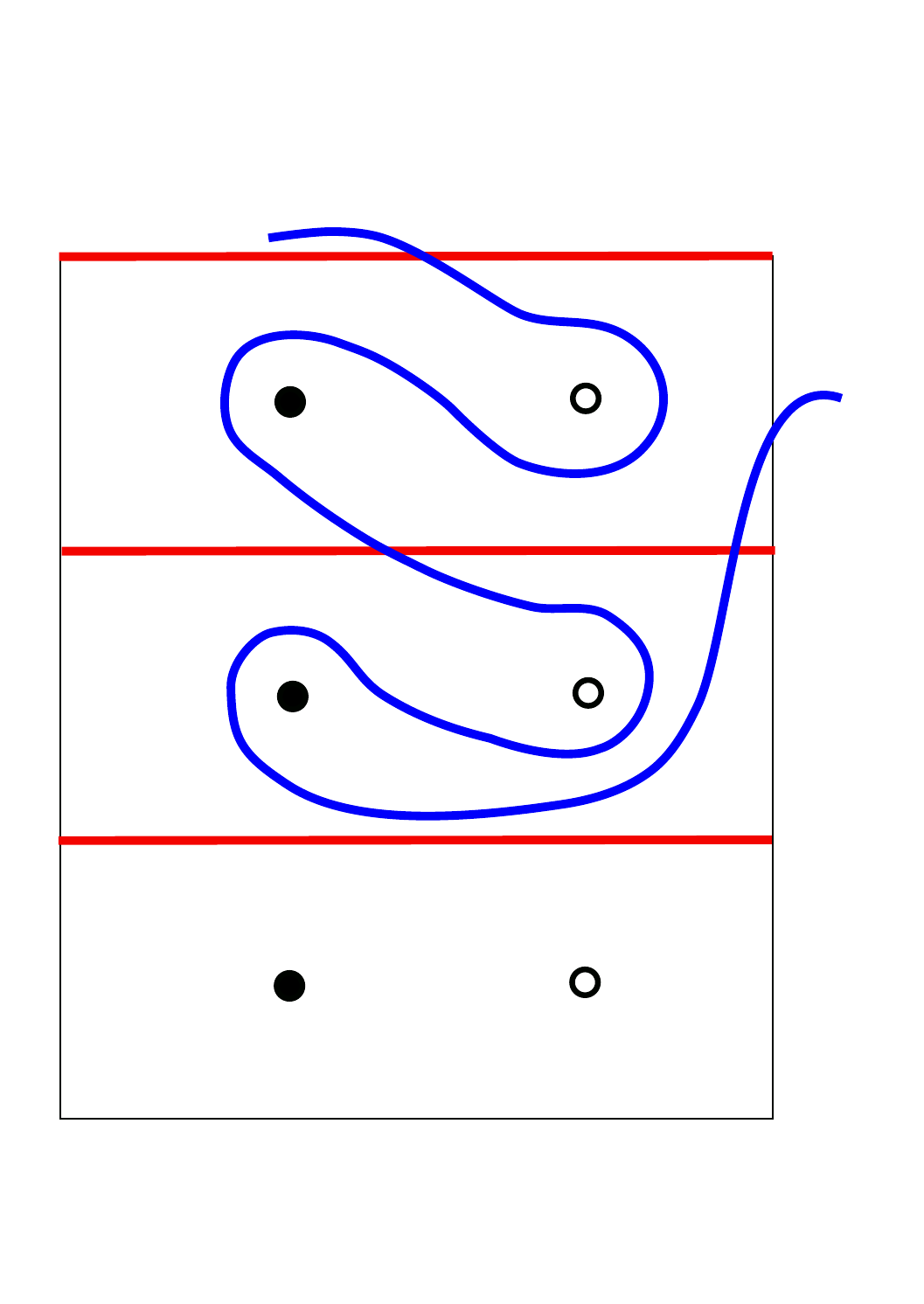}};
\node at (10.2,0.35) {\small{$\tau^{2n_2}$}};
\draw [->,very thick] (9.8,0) -- (10.5,0);
\node at (12.5,0) {\includegraphics[scale=0.34]{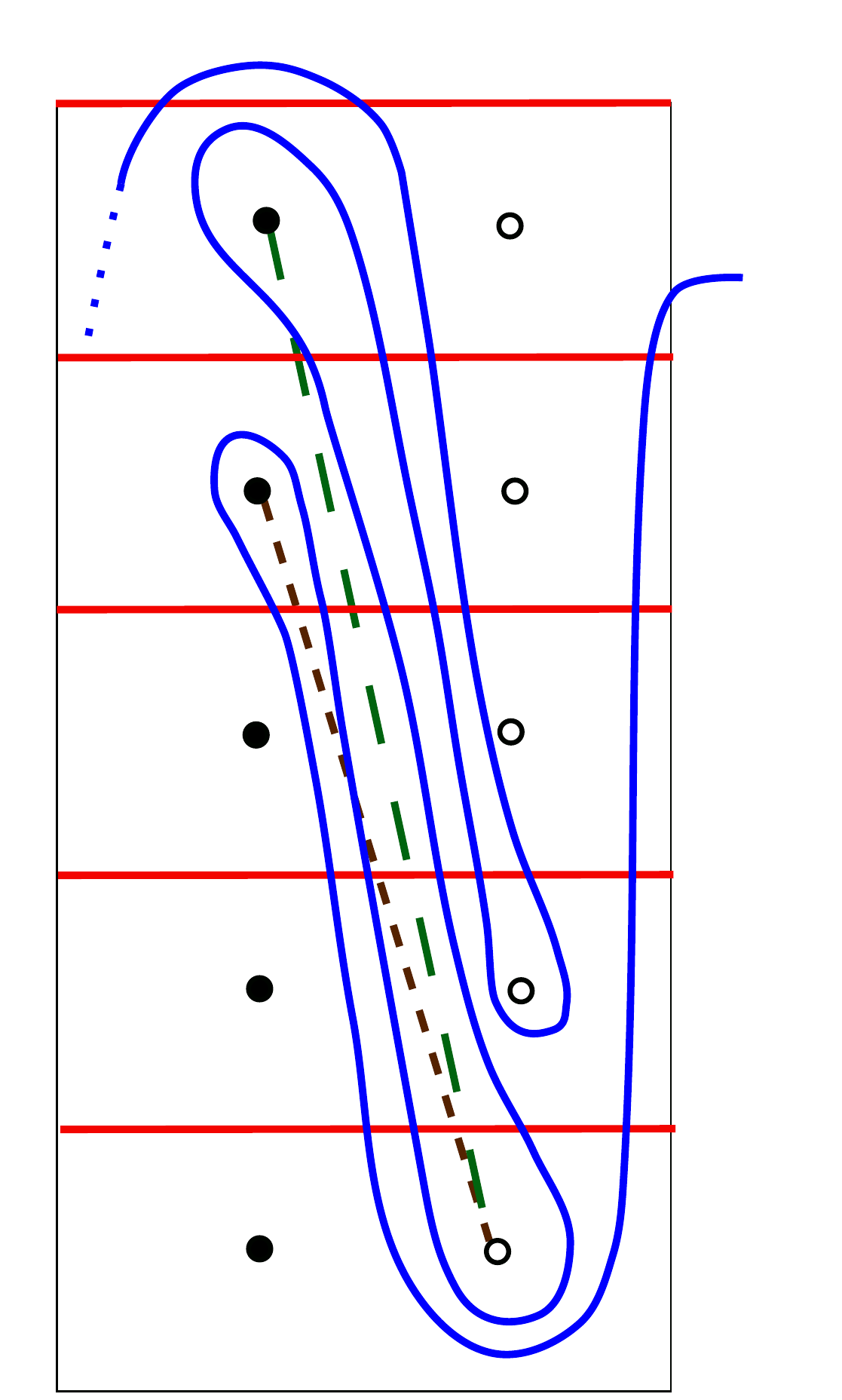}};
 \draw [decorate,decoration={brace,amplitude=4pt},xshift=0.5cm,yshift=0pt]
      (13.2,-1.2) -- (13.2,-2.7) node [midway,right,xshift=0.1cm] {$n_1$};
 \draw [decorate,decoration={brace,amplitude=4pt},xshift=0.5cm,yshift=0pt]
      (12.6,2.4) -- (12.6,-0.4) node [midway,right,xshift=0cm,yshift=28pt] {$n_2$};
       \draw [decorate,decoration={brace,amplitude=4pt,mirror},xshift=0.5cm,yshift=0pt]
      (11.1,2.2) -- (11.1,1) node [midway,left,xshift=0.05cm,yshift=5pt] {$n_1$};
       \draw [decorate,decoration={brace,amplitude=4pt,mirror},xshift=0.5cm,yshift=0pt]
      (11.1,0) -- (11.1,-2.7) node [midway,left,xshift=-0.1cm] {$n_2$};
 \node at (12.1,1.8) [text=OliveGreen] {$\gamma_-$};
 \node at (11.5,0.6) [text=Brown] {$\gamma_+$};
 
 \node at (10.6,1.6) [text=red] {$\alpha_4$};
\node at (10.6,0.7) [text=red] {$\alpha_3$};
  \node at (10.6,-0.8) [text=red] {$\alpha_2$};
      \node at (10.6,-2) [text=red] {$\alpha_1$};
\end{tikzpicture}
\caption{The action $\tau^{2n_2}\sigma \tau^{2n_1}$  when $n_1 =2 $ and $n_2 = 3,$  starting from the slope $-1$. The solid dots indicate (lifts of) the $z$ basepoint and the hollow dots indicate (lifts of) the $w$ basepoint. The slope of  $\gamma_+$ is   $-n_2$ and the slope of  $\gamma_-$ is   $-(n_2+1).$  }
\label{fig:11diagram}
\end{figure}

Fix a lift $\wa$ (resp.~$\wb$) of the $\alpha$ (resp.~$\beta$) curve. Consider the final diagram in Figure \ref{fig:11diagram} and suppose $\wb$ travels from left to right.  Observe that in one iteration $\wb$ pass through $n_1+n_2-1$ consecutive lifts of $\alpha$ (after isotoping away bigons without basepoint if necessary); denote them $\alpha_1, \cdots, \alpha_{n_1+n_2-1}$ from bottom to top. To include all intersection points of $\wa \cap \wb$, $n_1+n_2-1$ iterations are needed. Following $\wb$, denote the diagram of each iteration by $H(s)$ for $1\leq s \leq n_1+n_2-1$; call it the $s$-th \emph{block}. Note that in $H(s),$ $\wa$ is identified with $\alpha_s.$

To further determine the marked points, by Definition \ref{def:gamma} each $\gamma_+$ is slope $-2n_2$, and viewing $\H^+([2n_1,1,2n_2])$ as $\H^+([2(n_1+1),-1,2(n_2+1)])$, each $\gamma_-$ is of slope $-2(n_2+1).$

\begin{lemma}
    For $1\leq s \leq n_1+n_2-1$, each block $H(s)$ corresponds to the diagram depicted in Figure \ref{fig:block}. 
\end{lemma}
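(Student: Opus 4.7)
The plan is to verify the claim by tracking the curve $\wb$ through each step of the composition $\tau^{2n_2}\sigma\tau^{2n_1}$ and then reading off the local picture in a single fundamental block. The payoff of doing the work in this order is that, by equivariance of $\tau^{\pm 2n}$ and locality of $\sigma$, each of the $n_1+n_2-1$ iterations of $\wb$ produces the same local model up to translation.

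First, I would start with the $(1,1)$ diagram for the unknot with $\wb$ of slope $-1$ (as required for $\epsilon = +1$). Applying $\tau^{2n_1}$ is equivariant and, by the description recalled in Section~\ref{subsec:algo}, amounts to performing an upwards finger move at each intersection of a lift of $\wb$ with the vertical line $\ell_z$, so that $n_1$ additional lifts of the $z$ basepoint are engulfed. Drawing this explicitly turns each lift of $\wb$ into a staircase with $n_1$ steps in the $z$ column. Next, $\sigma$ is a local conformal transformation near each basepoint (Figure~\ref{fig:actions}\subref{subfig:ht}), so I would apply it independently at each lift of $z$ and $w$, and then swap the roles of $z$ and $w$ per the paper's convention. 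Finally, $\tau^{2n_2}$ applies a second equivariant finger move with respect to the new basepoint arrangement, adding $n_2$ lifts of the new $z$ basepoint per intersection with $\ell_z$.

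To isolate the block $H(s)$, I would fix lifts $\wa$ and $\wb$ as in the setup and follow $\wb$ in the universal cover, recording the sequence of intersections with $\alpha_1,\dots,\alpha_{n_1+n_2-1}$ (after isotoping away any bigon without basepoint). The $s$-th iteration corresponds to identifying $\wa$ with $\alpha_s$, and since every stretch of $\wb$ between two successive intersections with $\wa$ is obtained from the previous one by a translation in the covering action, verifying the claim reduces to checking that one such stretch agrees with Figure~\ref{fig:block}. This I would do by direct comparison with the last panel of Figure~\ref{fig:11diagram}, which already exhibits the picture in the case $(n_1,n_2)=(2,3)$, and noting that the general case differs only in the number of staircase steps contributed by each of the two finger moves.

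The main obstacle will be the bookkeeping: tracking how the $n_1$-step staircase from $\tau^{2n_1}$ is redirected by the local $\sigma$ move and then further stretched by $\tau^{2n_2}$, so that in the end exactly $n_1+n_2-1$ transverse crossings with distinct $\alpha$-lifts appear per iteration. I would also need to address the boundary iterations $s=1$ and $s=n_1+n_2-1$ separately, verifying that by the covering translation they still conform to the canonical block picture.
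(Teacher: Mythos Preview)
Your approach is correct and is essentially what the paper does, only spelled out in more detail: the paper's own proof is the one-liner ``this can be readily read off from the final diagram in Figure~\ref{fig:11diagram},'' which presupposes exactly the step-by-step construction $\tau^{2n_2}\sigma\tau^{2n_1}$ you outline. One small caution: your sentence about successive stretches of $\wb$ being related by translation is true of the curve, but the local picture at $\alpha_s$ genuinely depends on $s$ (as the counts $\max\{n_i-s,0\}$ etc.\ in Figure~\ref{fig:block}\subref{subfig:block2} show), so the verification is parametric in $s$ rather than a single check---you acknowledge this later, but the earlier phrasing could mislead.
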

\begin{proof}
    This can be readily read off from the final diagram in Figure \ref{fig:11diagram}.
\end{proof}
\begin{remark}
   To obtain Figure \ref{fig:block}, we are allowed to isotope the curves in the universal cover in the complement of the basepoints and other curves, and to move the basepoints around as long as they do not cross the curves. Since both these operations preserve the bigons, Figure \ref{fig:block} can be used to compute the knot Floer complex. We need to pay some extra attention to keep track of $\gamma_+$ and $\gamma_-.$
\end{remark}

\begin{figure}
\begin{minipage}{.5\linewidth}
\centering
\subfloat[The first block $H(1).$]{
\begin{tikzpicture}
\begin{scope}
    \begin{scope}[thin, black!0!white]
	  \draw  (-4, 0) -- (4,0);
      \end{scope}
\end{scope}
    \node at (0,0) {\includegraphics[scale=1.5]{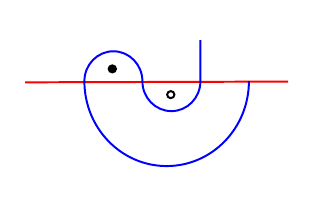}\label{subfig:block1}};
\end{tikzpicture}
}
\end{minipage}%
\begin{minipage}{.5\linewidth}
\centering
\subfloat[The last block $H(n_1+n_2-1).$]{
\begin{tikzpicture}
\begin{scope}
    \begin{scope}[thin, black!0!white]
	  \draw  (-5, 0) -- (5,0);
      \end{scope}
\end{scope}
    \node at (0,0) {\includegraphics[scale=1.5]{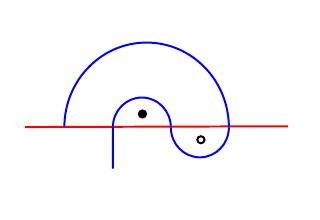}\label{subfig:block3}};
\end{tikzpicture}
}
\end{minipage}\\
\begin{minipage}{\linewidth}
\centering
\subfloat[The $s$-th block $H(s)$ with $2\leq s \leq n_1+n_2-2.$]{
\begin{tikzpicture}[scale=1.2]
\begin{scope}
    \begin{scope}[thin, black!0!white]
	  \draw  (-5, 0) -- (5,0);
      \end{scope}
\end{scope}
\node at (0.42,1.1) {$\cdots$};
\node at (-1.42,1.1) {$\cdots$};
\node at (-0.59,-1.05) {$\cdots$};
\node at (1.4,-1.04) {$\cdots$};

 \draw [decorate,decoration={brace,amplitude=4pt},xshift=0 cm,yshift=0.5pt]
      (-0.15,1.25) -- (0.9,1.25) node [midway,above,xshift=0.5cm,yshift=2pt] {\tiny$\max\{n_2-s,0\}$};
     \draw [decorate,decoration={brace,amplitude=4pt},xshift=0 cm,yshift=0.5pt]
    (-2,1.25) -- (-0.95,1.25) node [midway,above,xshift=-0.2cm,yshift=2pt] {\tiny$\max\{n_1-s,0\}$};

 \draw [decorate,decoration={brace,amplitude=4pt},xshift=0 cm,yshift=0.5pt]
      (-1.7,0.45) -- (0.7,0.45) node [midway,above,xshift=-0.1cm,yshift=1pt] {\tiny$\min\{n_1,s\} + \min\{n_2,s\} -s$};
\draw [decorate,decoration={brace,amplitude=4pt,mirror},xshift=0 cm,yshift=-0.5pt]
      (-0.85,-0.36) -- (1.59,-0.36) node [midway,below,xshift=-0.1cm,yshift=-1pt] {\tiny$\min\{n_1,s\} + \min\{n_2,s\} -s$};

     \draw [decorate,decoration={brace,amplitude=4pt,mirror},xshift=0 cm,yshift=-0.5pt]
      (0.89,-1.1) -- (1.9,-1.1) node [midway,below,xshift=0 cm,yshift=-2pt] {\tiny$\max\{s-n_1,0\}$};
     \draw [decorate,decoration={brace,amplitude=4pt,mirror},xshift=0 cm,yshift=-0.5pt]
    (-1.15,-1.1) -- (-0.1,-1.1) node [midway,below,xshift=-0.2cm,yshift=-2pt] {\tiny$\max\{s-n_2,0\}$};
    \node at (-2.1,-0.4) [text=OliveGreen] {$\gamma_-$};
    \node at (0.1,-0.2) [text=OliveGreen] {$\gamma_-$};
 \node at (-0.7,0.2) [text=Brown] {$\gamma_+$};
\node at (2,-0.2) [text=OliveGreen] {$\gamma_-$};
 \node at (1,0.2) [text=Brown] {$\gamma_+$};
    \node at (0,0) {\includegraphics[scale=1.8]{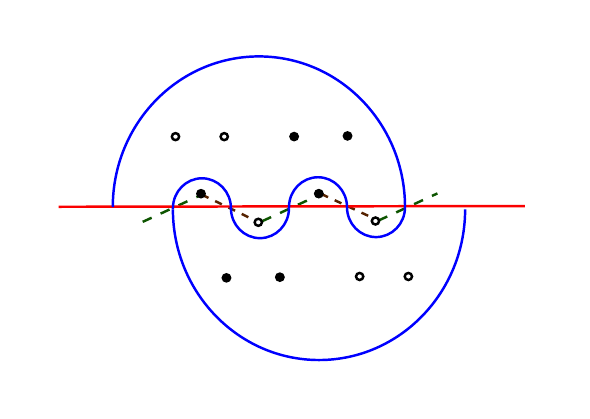}\label{subfig:block2}};
\end{tikzpicture}
}
\end{minipage}%
\caption{}
\label{fig:block}
\end{figure}
We now abuse the notation and denote the complex generated by $H(s)$ in $\CFKi(S^3,K^{+}_{n_1,n_2})$ also by $H(s)$, for $1 \leq s \leq n_1+n_2-1.$ Let the chain complex $G(1) = \CFKi(S^3,K^{+}_{n_1,n_2})$, and $G(i+1)=G(i)/(H(i)\cup H(n_1+n_2-i))$ for $1\leq i \leq \floor{\frac{n_1+n_2-1}{2}}.$ Observe that $H(s)$ and $H(n_1+n_2-s)$ are both subcomplexes in $G(s).$ 

We note that although suppressed from the notation, both $G(s)$ and $H(s)$ depend on $s,n_1,n_2$ and $k.$

\begin{definition}\label{def:ds}
Define a filtered  chain complex $D_s$ over $\F[U,U^{-1}]$ for $s \geq 1$ to be the complex generated by $x_i$ with $0\leq i \leq 2s$ and $y$,  where differentials are given by
\begin{align*}
    \partial x_{2i+1} &= x_{2i} + x_{2i+2}  \hspace{2em} 0 \leq i \leq s-1 \\
    \partial x_{2i} &= y  \hspace{6em} 0 \leq i \leq s 
    \intertext{with the filtration shifts}
    \dij (x_{2i+1}, x_{2i}) &= (1,0)\\  
    \dij (x_{2i+1}, x_{2i+2}) &= (0,1) \hspace{5em} 0 \leq i \leq s-1 \\
    \dij (x_{2i}, y) &= (i,s-i) \hspace{3.5em} 0 \leq i \leq s.
\end{align*}
We say $D_s$ is supported in Maslov grading $a$ and filtration level $(i,j)$ if $y$ is supported in Maslov grading $a$ and filtration level $(i,j)$.
\end{definition}
Alternatively, $D_s$ is the complex $C_s$ with the addition of a generator $y$ and the differentials from $x_{2i}$ to $y$ for $0\leq i \leq s$ with above filtration shifts. 
The complex $D(3)$ is shown below as an example.
\[
\xymatrix{
x_{0} \ar[ddd] & x_1 \ar[d] \ar[l]  & & \\
  & x_2 \ar[ldd] & x_3 \ar[d] \ar[l] & \\
  & & x_4 \ar[lld] & x_5 \ar[d] \ar[l]\\
  y & & & x_6 \ar[lll]
}
\]

\begin{lemma}\label{le:hs}
    When $s \leq \min\{n_1,n_2, \floor{\frac{n_1+n_2-1}{2}}\},$ there exists a filtered change of basis $T$ of $G(s),$ such that in the image of $T$,   $H(s)$   becomes a  summand, and moreover    $H(s) \cong D_s$. 
\end{lemma}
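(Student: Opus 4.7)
The plan is to read the chain complex $H(s)$ directly off the subdiagram in Figure \ref{fig:block}(c) by enumerating intersection points and bigons, then exhibit $H(s)$ as a filtered summand of $G(s)$ via a cancellation argument. When $s \leq \min\{n_1, n_2\}$, the three middle counts $\min\{n_1,s\}+\min\{n_2,s\}-s$ all equal $s$, so the central portion of the block contains $s$ lifts of the $z$-basepoint along the top strip and $s$ lifts of the $w$-basepoint along the bottom strip, with $\wb$ executing a slope-$\pm 1$ zig-zag through $2s+1$ intersection points with $\wa = \alpha_s$. Label these points $x_0, x_1, \ldots, x_{2s}$ from left to right.

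Short bigons between consecutive $x_{2i}, x_{2i+1}, x_{2i+2}$ each contain a single $z$ or $w$ basepoint, yielding $\partial x_{2i+1} = x_{2i} + x_{2i+2}$ with filtration shifts $(1,0)$ and $(0,1)$, which is exactly the staircase portion of $D_s$. The remaining generator $y$ should come from the lower-left corner intersection of the block (the one receiving the $\gamma_-$ markings in Figure \ref{fig:block}(c)). Each $x_{2i}$ is connected to $y$ by a long bigon sweeping over $i$ lifts of $w$ on one side and $s-i$ lifts of $z$ on the other, giving filtration shift $(i, s-i)$ to produce exactly Definition \ref{def:ds}. Verifying these bigon counts, and arguing that no further internal bigons exist within $H(s)$ other than those encoded by the two-handle-like staircase and the long differentials to $y$, will require careful bookkeeping on the universal cover, but follows the same technique used by Hales in \cite{jhales}.

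For the summand statement, observe first that by the recursive definition $G(s) = G(s-1)/(H(s-1)\cup H(n_1+n_2-s+1))$, in $G(s)$ the subcomplex $H(s)$ has no outgoing differential leaving it (its earlier neighbors have been quotiented out), so one only needs to eliminate \emph{incoming} differentials from generators of blocks $H(s')$ with $s < s' < n_1+n_2-s$. Any such bigon must cross from block $H(s')$ into the region of the universal cover occupied by $H(s)$; the constraint $s \leq \lfloor (n_1+n_2-1)/2 \rfloor$ guarantees that a matching outer block exists so that the cross-differentials pair up and cancel. Concretely, I would apply the filtered cancellation lemma (e.g.\ \cite[Proposition 11.57]{Bordered}) iteratively, at each step pairing a target generator inside $H(s)$ with a unique source outside it whose double filtration agrees, and checking that after cancellation $H(s)$ survives unchanged because all such sources lie outside $H(s)$.

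The main obstacle will be the bookkeeping in the last paragraph: showing that the cross-bigons between $H(s)$ and the outer blocks always have filtration shifts allowing a clean pairing, and that no spurious differentials land on the $x_i$ themselves rather than on the corner generator $y$. I expect this to follow from tracking how $\wb$ re-enters the strip of $\alpha_s$ when travelling between iterations, and noting that any such re-entry bigon passes over at least one basepoint that enforces strict filtration drop in a direction complementary to the ones used for the internal differentials of $D_s$. Once the split is verified, the identification $H(s)\cong D_s$ is immediate from the differentials and filtration shifts computed in the first two paragraphs.
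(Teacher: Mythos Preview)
Your identification of $H(s)$ with $D_s$ by reading off the staircase $x_0,\dots,x_{2s}$ and the long bigons to a corner generator $y$ is essentially the paper's computation, and is fine (modulo which corner point is $y$; in the paper's labelling it is the intersection immediately to the right of the staircase, shared with the next block).

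The gap is in your splitting argument. The filtered cancellation lemma is not the right tool here: cancelling a source outside $H(s)$ against a target inside $H(s)$ at equal filtration would \emph{delete} that target from the complex, so ``$H(s)$ survives unchanged'' is simply false under that procedure. What is needed is a filtered change of basis on the \emph{sources} of the incoming arrows, not a cancellation of source--target pairs. The paper's key observation, which you do not reach, is twofold. First, only the \emph{adjacent} block $H(s+1)$ has differentials into $H(s)$ (not all intermediate $H(s')$), and every such arrow lands on the single generator $y$; concretely $\partial x'_{2i}=y+y'$ where the $x'_i$ generate the staircase in $H(s+1)$ and $y'$ is its own corner generator. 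Second, because the staircases of $H(s)$ and $H(s+1)$ are parallel, the substitution $x'_i\mapsto x'_i+x_i$ (with $x'_{2(s+1)}\mapsto x'_{2(s+1)}+x_{2s}$ when $s<\min\{n_1,n_2\}$) absorbs the unwanted $y$--component of $\partial x'_{2i}$ into the already--present internal arrow $x_{2i}\to y$, leaving $H(s)$ as a direct summand. The filtration check amounts to comparing $\dij(x'_{2i},y)$ with $\dij(x_{2i},y)$ and seeing that the former dominates the latter componentwise, which follows from the extra basepoints in the outer arcs of $H(s+1)$.

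So: your diagrammatic description of $D_s$ is correct, but the mechanism you propose for splitting it off does not work. The missing idea is the explicit basis change on $H(s+1)$ exploiting that the two consecutive blocks have matching staircases mapping to the same~$y$.
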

\begin{proof}
    Consider two consecutive blocks $H(s)$ and $H(s+1)$ in $G(s)$ for $1\leq s \leq \min\{n_1,n_2, \floor{\frac{n_1+n_2-1}{2}}\}.$  See Figure \ref{fig:consecutive}. Denote the intersection points of $\wb \cap \wa$ by $x_0, \cdots, x_{2s}, y , x'_{0},\cdots, x'_{2s'}, y'$ in order, where $s'= \min\{n_1,s+1\} + \min\{n_2,s+1\} -s-1$, which is the number of small inner arcs in each half plane in $H(s+1).$ There are two cases to discuss: when $s= \min \{n_1, n_2\}$ or $s<\min \{n_1, n_2\}.$
    \begin{itemize}
        \item Suppose $s= \min \{n_1, n_2\}$, then $s' = s.$ Observe that the only arrows connected to $H(s)$ are given by 
        \[
        \partial x'_{2i} = y + y'  \hspace{3em} 0 \leq i \leq s.
        \]
        Perform change of basis 
        \begin{align*}
            x'_{i} \mapsto x'_{i} + x_{i} \hspace{3em} 0 \leq i \leq 2s 
        \end{align*}
        splits off $H(s)$ as a summand. Since the filtration shifts are
        \begin{align}
        \dij (x_{2i}, y) &= (i,s-i)  \\
            \dij (x'_{2i}, y) &= (i,s-i) + (\max \{n_1 -s-1, 0\}, \max \{n_2 -s-1, 0\})   \label{eq:filshift1}               \end{align}     
        for $0 \leq i \leq s,$ this change of basis is filtered. 
        \item Suppose $s< \min \{n_1, n_2\}$, then $s' = s+1.$ Similarly, the only arrows connected to $H(s)$ are given by 
        \[
        \partial x'_{2i} = y + y'  \hspace{3em} 0 \leq i \leq s+1.
        \]
        Perform change of basis 
        \begin{align*}
            x'_{i} &\mapsto x'_{i} + x_{i} \hspace{3em} 0 \leq i \leq 2s \hspace{2em} \  \\ 
            x'_{2(s+1)} &\mapsto x'_{2(s+1)} + x_{2s}
        \end{align*}
        splits off $H(s)$ as a summand. Since the filtration shifts are
        \begin{align}
        \dij (x_{2i}, y) &= (i,s-i)   \hspace{8 em}  \text{for } 0 \leq i \leq s,  \quad \text{and} \\
        \dij (x'_{2i}, y) &= (i,s+1-i) + (n_1 -s-1, n_2 -s-1) \hspace{0.5em}     \end{align}     
        for $0 \leq i \leq s+1,$ this change of basis is filtered. 
    \end{itemize}
    And it is clear from the diagram that each $H(s)$ generates a $D_s$ summand (after quotienting out the top outmost arc).
\end{proof}
\begin{figure}[htb!]
   \centering
\begin{tikzpicture}
\node at (2.86,1.1) {$\cdots$};
\node at (1,1.1) {$\cdots$};

\node at (-1.2,0.4) {$x_0$};
\node at (-2 ,0.4) {$x_1$};
\node at (-3.5 ,0.4) {$\cdots$};
\node at (-5 ,0.4) {$x_{2s}$};
\node at (-0.5 ,0.3) {$y$};

\node at (5 ,0.4) {$y'$};
\node at (4.28 ,0.4) {$x'_0$};
\node at (3.45 ,0.4) {$x'_1$};
\node at (1.9 ,0.4) {$\cdots$};
\node at (0.3 ,0.4) {$x'_{2s'}$};

\draw [decorate,decoration={brace,amplitude=4pt},xshift=0cm,yshift=0pt]
      (0.45,1.3) -- (1.45,1.3) node [midway,above,xshift=-0.2 cm] {\tiny$\max \{n_1 -s-1, 0\}$};
      \draw [decorate,decoration={brace,amplitude=4pt},xshift=0cm,yshift=0pt]
      (2.35,1.3) -- (3.35,1.3) node [midway,above,xshift=0.5 cm] {\tiny$\max \{n_2 -s-1, 0\}$};

    \node at (0,0) {\includegraphics[scale=1.5]{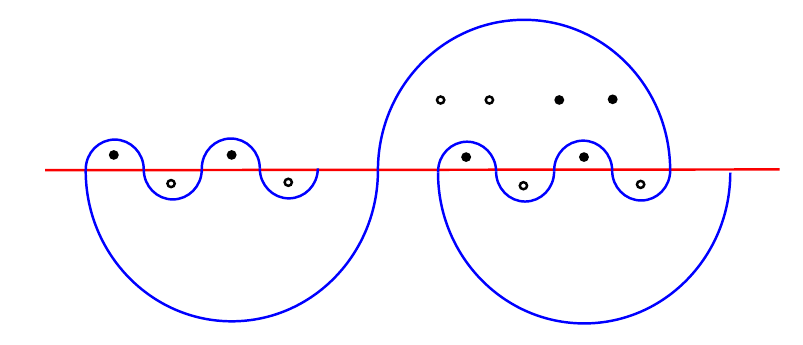}};
\end{tikzpicture}
\caption{Two consecutive blocks $H(s)$(left) and $H(s+1)$(right) in $G(s)$ for $1\leq s \leq \min\{n_1,n_2, \floor{\frac{n_1+n_2-1}{2}}\}.$ There could be other basepoints (at most more of each kind) in $H(s+1)$ in the lower half plane.}
\label{fig:consecutive}
\end{figure}
\begin{lemma}\label{le:hminus}
    When $s \leq \min\{n_1,n_2, \floor{\frac{n_1+n_2-1}{2}}\},$ there exists a filtered change of basis $T'$ of $G(s),$ such that in the image of $T'$,   $H(n_1+n_2-s)$   becomes a  summand, and moreover    $H(n_1+n_2-s) \cong D_s$. 
\end{lemma}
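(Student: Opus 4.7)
The plan is to mirror the argument for Lemma \ref{le:hs}, exploiting the reflective symmetry between blocks $H(s)$ and $H(n_1+n_2-s)$. First, I would verify from Figure \ref{fig:block}\subref{subfig:block2} and the formula for the number of inner arcs $N(s)=\min\{n_1,s\}+\min\{n_2,s\}-s$ that, whenever $s\le\min\{n_1,n_2\}$, the block $H(n_1+n_2-s)$ has exactly $s$ inner arcs in each half-plane (since $N(n_1+n_2-s)=s$), with $\max\{n_1-s,0\}$ and $\max\{n_2-s,0\}$ extra basepoints appearing in the \emph{lower} half-plane, and none in the upper. Thus $H(n_1+n_2-s)$ is the spatial reflection of $H(s)$ across a horizontal axis.

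Next, I would consider, inside $G(s)$, the two consecutive blocks $H(n_1+n_2-s-1)$ and $H(n_1+n_2-s)$, which both survive in $G(s)$ because we have only quotiented blocks of index $<s$ or $>n_1+n_2-s$. Labelling the intersection points of $\wa\cap\wb$ in these two blocks as $y', x'_{2s'}, \ldots, x'_1, x'_0, y, x_{2s}, \ldots, x_1, x_0$ in the order they appear along $\wb$ (where $s'$ is the analogous count for the earlier block), the only differentials into $H(n_1+n_2-s)$ from $H(n_1+n_2-s-1)$ take the form $\partial x'_{2i}=y+y'$, by the same bigon analysis that produced the corresponding identity in Lemma \ref{le:hs}. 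Splitting into the two cases $s=\min\{n_1,n_2\}$ (so $s'=s$) and $s<\min\{n_1,n_2\}$ (so $s'=s+1$), I would perform the change of basis
\begin{align*}
    x'_{i} &\longmapsto x'_{i}+x_{i} \qquad 0\le i\le 2s, \\
    x'_{2(s+1)} &\longmapsto x'_{2(s+1)}+x_{2s} \qquad (\text{only in the second case}),
\end{align*}
which directly splits off the subcomplex generated by $\{x_0,\ldots,x_{2s},y\}$, namely $H(n_1+n_2-s)$.

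To confirm the change of basis is filtered, I would compute $\Delta_{\II,\JJ}(x_{2i},y)=(i,s-i)$ from the inner arcs of $H(n_1+n_2-s)$, and
\[
\Delta_{\II,\JJ}(x'_{2i},y) = (i,s'-i) + (\max\{n_1-s-1,0\},\max\{n_2-s-1,0\}),
\]
which agree with the shifts for $x_{2i}$ up to an added non-negative vector --- this is the mirror of the inequality used in Lemma \ref{le:hs}, with the extra basepoints now contributing on the bottom rather than the top. Finally, reading off the induced differentials on the split summand --- the inner arcs give $\partial x_{2i+1}=x_{2i}+x_{2i+2}$ with shifts $(1,0)$ and $(0,1)$, and the bigons passing around the remaining basepoints give $\partial x_{2i}=y$ with shifts $(i,s-i)$ --- we recognize the defining data of $D_s$, giving $H(n_1+n_2-s)\cong D_s$.

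The main obstacle is purely bookkeeping: verifying that the filtration shifts on the reflected side line up correctly and that the bigons between $x_{2i}$ and $y$ in $H(n_1+n_2-s)$, which pass around the lower extra basepoints, still produce the shifts $(i,s-i)$ rather than their swap. This follows because the $\II$-filtration counts $w$-crossings and $\JJ$-filtration counts $z$-crossings, and the lower extra basepoints in $H(n_1+n_2-s)$ are arranged in the same left/right pattern as the upper extra basepoints in $H(s)$; there is no swap between $\II$ and $\JJ$.
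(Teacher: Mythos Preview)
Your proof is essentially correct and coincides with one of the two approaches the paper explicitly offers: the paper's proof is the one-line ``This follows from Lemma~\ref{le:hs} and the symmetry of the knot Floer complex. Equivalently one can run the similar argument as in the proof of Lemma~\ref{le:hs} again,'' and you have chosen to carry out the second option in detail. Your identification $N(n_1+n_2-s)=s$, the observation that the extra basepoints migrate from the upper half-plane in $H(s)$ to the lower half-plane in $H(n_1+n_2-s)$, the change of basis $x'_i\mapsto x'_i+x_i$, and the recognition of the resulting summand as $D_s$ are all the right moves.

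One small caution on the bookkeeping you flag at the end: the extra-basepoint contribution to $\Delta_{\II,\JJ}(x'_{2i},y)$ may come out as $(\max\{n_2-s-1,0\},\max\{n_1-s-1,0\})$ rather than $(\max\{n_1-s-1,0\},\max\{n_2-s-1,0\})$, since the reflection exchanges which of the two clusters of lower basepoints sits on the $z$-side versus the $w$-side (compare the left/right labels in Figure~\ref{fig:block}\subref{subfig:block2} for the upper and lower braces). This does not affect the argument --- the added vector is nonnegative in both coordinates either way, so the change of basis remains filtered --- but it is worth noting if you write out the filtration levels explicitly, as in Proposition~\ref{prop:class1}. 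The paper sidesteps this entirely by invoking the $i\leftrightarrow j$ symmetry of knot Floer homology rather than redoing the count.
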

\begin{proof}
    This follows from Lemma \ref{le:hs} and the symmetry of the knot Floer complex. Equivalently one can run the similar argument as in the proof of Lemma \ref{le:hs} again.
\end{proof}

\begin{lemma}\label{le:simul}
    When $s \leq \min\{n_1,n_2\} -1,$ there exists a filtered change of basis such that $G(s)\cong G(s+1) \oplus H(s) \oplus H(n_1+n_2-s) \cong G(s+1) \oplus 2D_s.$  
\end{lemma}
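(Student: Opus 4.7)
The plan is to run the two changes of basis of Lemmas \ref{le:hs} and \ref{le:hminus} in tandem and verify that they combine into a single filtered change of basis of $G(s)$. The strengthened hypothesis $s\le\min\{n_1,n_2\}-1$ (as opposed to the weaker bound in the previous lemmas) gives $2s\le n_1+n_2-2$, so $s+1\le n_1+n_2-s-1$; in particular the block $H(n_1+n_2-s)$ that we want to split off is distinct from $H(s+1)$, and $s$ still lies in the range where Lemmas \ref{le:hs} and \ref{le:hminus} apply.

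First I would recall the explicit form of the change of basis $T$ in Lemma \ref{le:hs}: it only alters generators of $H(s+1)$ that are adjacent to $H(s)$, by adding to each such $x'_i$ a corresponding generator $x_i$ of $H(s)$. Symmetrically, the change of basis $T'$ in Lemma \ref{le:hminus} only alters generators of $H(n_1+n_2-s-1)$ adjacent to $H(n_1+n_2-s)$, by adding generators of $H(n_1+n_2-s)$. When $s+1<n_1+n_2-s-1$, the blocks $H(s+1)$ and $H(n_1+n_2-s-1)$ are disjoint, so $T$ and $T'$ modify disjoint generators and obviously commute. In that case $T\circ T'$ is a filtered change of basis that simultaneously splits off both $H(s)$ and $H(n_1+n_2-s)$, and together with Lemmas \ref{le:hs} and \ref{le:hminus} this yields $G(s)\cong G(s+1)\oplus H(s)\oplus H(n_1+n_2-s)\cong G(s+1)\oplus 2D_s$.

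The boundary case $s+1=n_1+n_2-s-1$, i.e.\ $n_1+n_2=2s+2$, is what I expect to require the most care, since now the middle block is the same for both changes of basis. Here I would verify, using the local picture in Figure \ref{fig:block}(c), that $T$ touches only the leftmost column of generators of $H(s+1)$ (those feeding into $H(s)$) whereas $T'$ touches only the rightmost column (those feeding into $H(n_1+n_2-s)$). Under our bound $s\le\min\{n_1,n_2\}-1$, the middle block contains enough intersection points that these two columns are disjoint, so the two changes of basis still commute.

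The main obstacle I anticipate is precisely this middle-block verification: one must check not only disjointness of the affected generators but also that after performing both changes of basis no new differentials appear between $H(s)\oplus H(n_1+n_2-s)$ and the quotient $G(s+1)$. I expect this to reduce to a short bigon-count argument using Figure \ref{fig:consecutive} applied on both ends of the middle block, showing that all cross-terms cancel for the same reason they cancelled in each of Lemmas \ref{le:hs} and \ref{le:hminus} individually.
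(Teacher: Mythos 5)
Your overall route is the same as the paper's: under $s\le\min\{n_1,n_2\}-1$ the two end blocks $H(s)$ and $H(n_1+n_2-s)$ are disjoint, and one simply applies the composite $T'\circ T$ of the changes of basis from Lemmas \ref{le:hs} and \ref{le:hminus}; the paper's proof is exactly this (stated in one line, without discussing the coincidence $H(s+1)=H(n_1+n_2-s-1)$). So it is to your credit that you isolate the boundary case $n_1+n_2=2s+2$, i.e.\ $n_1=n_2=s+1$, which does occur (it is the last splitting step for $K^{+}_{n,n}$) and which the paper glosses over.

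However, your justification of that boundary case is not accurate. By the proof of Lemma \ref{le:hs}, $T$ does not touch only a ``leftmost column'' of the middle block: it replaces essentially every generator $x'_i$ of $H(s+1)$ by $x'_i+x_i$ with $x_i\in H(s)$, and symmetrically $T'$ modifies essentially every generator of $H(n_1+n_2-s-1)$ by adding a generator of $H(n_1+n_2-s)$. In the boundary case these are the same generators, so the disjointness of modified generators that you invoke fails. The composition nevertheless works, for a different and purely algebraic reason: writing $T=I+A$ and $T'=I+B$, the correction $A$ is supported on $H(s+1)$ and takes values in $H(s)$, while $B$ is supported on $H(n_1+n_2-s-1)$ and takes values in $H(n_1+n_2-s)$; since $n_1+n_2\ge 2s+2$ the blocks $H(s)$, $H(n_1+n_2-s)$ and the middle block are pairwise distinct, so $AB=BA=0$, the two changes of basis commute, and each is still filtered. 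Moreover the only generators of the middle block involved in arrows to or from $H(n_1+n_2-s)$ are either $y$-type generators (untouched by $T$) or generators whose added correction $x_i$ has differential inside $H(s)$, which neither maps to nor receives arrows from $H(n_1+n_2-s)$; hence Lemma \ref{le:hminus} applies verbatim after $T$. With this repair your argument agrees with the paper's, and your final concern about new differentials between $H(s)\oplus H(n_1+n_2-s)$ and $G(s+1)$ requires no bigon counting---at this stage it is linear algebra with the differentials already recorded in Lemmas \ref{le:hs} and \ref{le:hminus}.
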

\begin{proof}
    When $s \leq \min\{n_1,n_2\} -1,$ $H(s)\cap H(n_1+n_2-1) = \varnothing.$ By Lemma \ref{le:hs} and \ref{le:hminus}, one simply apply the change of basis $T'\circ T$, and in the resulting complex $H(s)$ and $ H(n_1+n_2-1)$ both become summands.
\end{proof}
These lemmas allow us to completely determine the complex $\CFKi(S^3,K^{+}_{n_1,n_2}).$ Note that due to \eqref{eq:relation1},   we may assume $n_1 \leq n_2.$ Therefore from now on we only consider  knots of the form $K^{+}_{n,n + k}$ with $n>0$ and $k\geq 0$. 
\begin{proposition}\label{prop:n1n2} For $n>0$ and $k\geq 0,$ we have
    \[
    \CFKi(S^3,K^{+}_{n,n + k})\cong G(n) \oplus 2\left(\bigoplus^{n-1}_{s=1}D_s \right).
    \]
\end{proposition}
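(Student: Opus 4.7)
The plan is a direct induction using Lemma \ref{le:simul}. Since $n_1 = n$ and $n_2 = n+k$ with $k\geq 0$, we have $\min\{n_1,n_2\} = n$, so Lemma \ref{le:simul} applies for every $s$ with $1 \leq s \leq n-1$, yielding a filtered chain homotopy equivalence
\[
G(s) \;\cong\; G(s+1) \,\oplus\, H(s) \,\oplus\, H(n_1+n_2 - s) \;\cong\; G(s+1)\,\oplus\, 2D_s.
\]

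First I would check the initial step: by definition, $G(1) = \CFKi(S^3,K^{+}_{n,n+k})$, and by Lemma \ref{le:simul} with $s = 1$ (which requires $1 \leq n-1$, trivial when $n \geq 2$; the case $n=1$ makes the direct sum empty and the statement reduces to $\CFKi(S^3,K^+_{1,1+k}) \cong G(1)$ tautologically). Then I would iterate: starting from $G(1)$, apply Lemma \ref{le:simul} at $s=1$, then $s=2$, and so on up to $s = n-1$. Telescoping these identifications gives
\[
\CFKi(S^3,K^{+}_{n,n+k}) \;=\; G(1) \;\cong\; G(n) \,\oplus\, 2\!\left( \bigoplus_{s=1}^{n-1} D_s \right),
\]
which is exactly the desired statement.

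There is no real obstacle here beyond bookkeeping. The one point that needs a small comment is why we stop at $s = n$: Lemma \ref{le:simul} requires $s \leq \min\{n_1,n_2\} - 1 = n - 1$, because once $s = n$ the two blocks $H(s)$ and $H(n_1+n_2-s)$ can start to overlap (equivalently, the summand statement of Lemma \ref{le:hs} no longer produces two disjoint summands). Thus the induction terminates exactly at $G(n)$, and the residual term $G(n)$ is precisely what we leave on the right-hand side for separate analysis in the subsequent subsections.
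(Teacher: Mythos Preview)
Your proof is correct and follows the same approach as the paper: iterate Lemma \ref{le:simul} for $s=1,\dots,n-1$ starting from $G(1)=\CFKi(S^3,K^{+}_{n,n+k})$, splitting off a pair of $D_s$ summands at each step. Your additional remarks on the $n=1$ base case and on why the induction halts at $s=n-1$ are helpful but not strictly necessary.
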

\begin{proof}
    This follows from Lemma \ref{le:simul}. Starting from $G(1) \cong \CFKi(S^3,K^{+}_{n_1,n_2}),$ we keep splitting off pairs of summands of $D_s$ when  $s \leq \min\{n_1,n_2\} -1.$
\end{proof}
Therefore we need only to determine the quotient complex $G(n) \cong \bigcup_{j=0}^{k} H(n+j)$ in $\CFKi(S^3,K^{+}_{n,n + k}).$ Note that the top outmost arc in $H(n)$ and the bottom  outmost arc in $H(n+k)$ are quotient out. We discuss several cases for different $k.$
\begin{itemize}
    \item When $k=0,$ $G(n) \cong C_n.$
    \item When $k=1,$ we can use either Lemma \ref{le:hs} or \ref{le:hminus} to split off a $D_n$ summand. The remaining summand $G(n)/D_n \cong C_n.$
    \item When $k=2,$ we can use Lemma \ref{le:hs} and \ref{le:hminus} to split off a pair of $D_n$ summands. The remaining summand $G(n+1) \cong C_n.$
    \item When $k>3,$ we again can use Lemma \ref{le:hs} and \ref{le:hminus} to split off a pair of $D_n$ summands. The remaining summand is $G(n+1).$ 
\end{itemize}
\begin{definition}\label{def:cnk}
For $n>0, k\geq 2$, define  $C_{n,k}$ to be the complex generated by $\{x^{(j)}_{i} ~|~ 1\leq j \leq k-1, 0\leq i \leq 2n \} \cup \{ y_j ~|~ 1\leq i \leq k-2\}$. For the simplicity for the following let $y_0 = y_{k-1} = 0$. For $1 \leq j \leq k-1$, the differentials are given by 
\begin{align*}
     \partial x^{(j)}_{2i+1} &= x^{(j)}_{2i} + x^{(j)}_{2i+2} \hspace{2em} 0 \leq i \leq n-1 \\
    \partial x^{(j)}_{2i} &= y_{j} + y_{j-1}  \hspace{3em} 0 \leq i \leq n
    \intertext{with the filtration shifts}
    \dij (x^{(j)}_{2i+1},x^{(j)}_{2i} ) &= (1,0)\\
    \dij (x^{(j)}_{2i+1},x^{(j)}_{2i+2} ) &= (0,1)  \hspace{7em} 0 \leq i \leq n-1\\
    \dij (x^{(j)}_{2i},y_{j} ) &= (i+j,n-i)\\
    \dij (x^{(j)}_{2i},y_{j-1} ) &= (i,k-j+n-i)  \hspace{2em} 0 \leq i \leq n.
\end{align*} 
Note that $C_{n,2}=C_n.$ See Figure \ref{fig:adblock}\subref{subfig:cnk} for an example when $n=2$ and $k=4$.
\end{definition}
\begin{lemma}\label{le:cnk}
    When $k\geq 2,$ in $\CFKi(S^3,K^{+}_{n,n + k})$ the quotient complex $G(n+1) \cong C_{n,k}.$
\end{lemma}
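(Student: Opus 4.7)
The plan is to directly identify the blocks making up $G(n+1)$ with the rows of $C_{n,k}$. Recall that $G(n+1) = G(n)/(H(n)\cup H(n+k))$, so as a chain complex it is generated by the intersection points appearing in the blocks $H(n+1),\dots,H(n+k-1)$, with the two outermost arcs on the boundary (those lying in $H(n)$ and $H(n+k)$) already set to zero. Specializing Figure \ref{fig:block}\subref{subfig:block2} to $n_1 = n$ and $n_2 = n+k$ and the block index $s = n+j$ with $1\le j\le k-1$, one computes
\begin{align*}
\min\{n_1,s\}+\min\{n_2,s\} - s &= n,\\
\max\{n_1 - s, 0\} &= 0,\qquad \max\{n_2 - s, 0\} = k-j,\\
\max\{s - n_1, 0\} &= j,\qquad \max\{s - n_2, 0\} = 0,
\end{align*}
so the block $H(n+j)$ contains $n$ small inner arcs in each half-plane, together with $k-j$ outer basepoints in the upper-right region and $j$ outer basepoints in the lower-right region.

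First, I would label the $2n+1$ middle intersection points of the $s = n+j$ block (from left to right) by $x^{(j)}_0,x^{(j)}_1,\dots,x^{(j)}_{2n}$, and label its two endpoint intersection points by $y_{j-1}$ (left) and $y_{j}$ (right). The endpoints are shared between consecutive blocks, and by the construction of $G(n+1)$ we have $y_0 = y_{k-1} = 0$. The internal differentials of each block match those in Definition \ref{def:cnk} directly from Figure \ref{fig:block}: the odd-indexed $x^{(j)}_{2i+1}$ bounds its two neighbors $x^{(j)}_{2i}$ and $x^{(j)}_{2i+2}$, while each even-indexed $x^{(j)}_{2i}$ bounds the two endpoints $y_{j-1}$ and $y_j$ of its block (this is the same kind of ``outer bigon'' count appearing in the proof of Lemma \ref{le:hs}). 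Note that the differentials between consecutive blocks $H(n+j)$ and $H(n+j+1)$ both land in the shared endpoint $y_j$, which matches the claim that $\partial x^{(j)}_{2i} = y_{j-1}+y_j$.

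Next, I would verify the filtration shifts. The shifts within a single block (the $(1,0)$ and $(0,1)$ pair for the odd-indexed differentials) are immediate from the local picture. For the filtration shift from $x^{(j)}_{2i}$ to an endpoint $y$, one counts the numbers of $z$ and $w$ basepoints enclosed by the outer bigon passing over the outer arcs of the block. From Figure \ref{fig:block}\subref{subfig:block2}, the bigon from $x^{(j)}_{2i}$ to $y_j$ (the right endpoint) crosses $i$ basepoints of one kind on the $\aa$-side and $n-i$ on the other, plus the $j$ outer basepoints in the lower-right region, giving shift $(i+j,n-i)$; the bigon to $y_{j-1}$ (the left endpoint) gives shift $(i, (k-j)+(n-i)) = (i, k-j+n-i)$. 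These match Definition \ref{def:cnk} exactly.

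The main subtlety will be in correctly identifying where each outer basepoint sits relative to the bigons in $H(n+j)$, and in confirming that there are no additional bigons between non-adjacent blocks (equivalently, that the only arrows from a block go to its two endpoints). The first is handled by drawing the block carefully and using the fact that $\gamma_+$ and $\gamma_-$ paths are already taken care of by the marked-basis bookkeeping; the second follows since $\wa$ in block $H(n+j)$ only interacts with the endpoints it shares with its immediate neighbors. Assembling these checks gives the desired isomorphism $G(n+1) \cong C_{n,k}$.
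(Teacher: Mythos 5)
Your proposal follows essentially the same route as the paper's proof: identify $G(n+1)$ with the union of the blocks $H(n+j)$ for $1\le j\le k-1$, specialize the general block picture to get $n$ inner arcs together with $k-j$ and $j$ outer basepoints, match the staircase differentials and the relations $\partial x^{(j)}_{2i}=y_j+y_{j-1}$ (with $y_0=y_{k-1}=0$) to Definition \ref{def:cnk}, and obtain the filtration shifts by counting basepoints in the bigons. The only discrepancy is a labeling convention: the paper names the middle generators $x^{(j)}_{2n},\dots,x^{(j)}_0,y_j$ from left to right, the reverse of your ordering, so you should reconcile your orientation with Figure \ref{fig:adblock} to make sure the stated bigon counts (which endpoint picks up the $j$ versus the $k-j$ extra basepoints from which side) are literally accurate -- this does not affect the conclusion.
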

\begin{proof}
    The quotient complex $G(n+1) \cong \bigcup_{j=1}^{k-1} H(n+j)$ in $\CFKi(S^3,K^{+}_{n,n + k})$. We have drawn the block $H(n+j)$ for $1\leq j \leq k-1$ in Figure \ref{fig:adblock}\subref{subfig:adblock}. Again   the top outmost arc in $H(n+1)$ as well as the bottom outmost arc in $H(n+k-1)$ are quotiented out. Denoting the generators in $H(n+j)$  by $x^{(j)}_{2n},  \cdots x^{(j)}_{0}, y_j$ from left to right in order, note that for each $j$ the $2n+1$ generators $x^{(j)}_{2n},  \cdots x^{(j)}_{0}$ form a staircase, and for each $0\leq i \leq n$ the generator $x^{(j)}_{2i}$ has a differential to $y_j$ and $y_{j-1}$ (taking $y_0 = y_{k-1} =0$).  The filtration shifts can be readily computed by counting the number of basepoints in the bigons.
\end{proof}
\begin{figure}[htb!]
\begin{minipage}{.5\linewidth}
\subfloat[]{
   \centering
\begin{tikzpicture}\label{subfig:adblock}
\node at (-0.5,1.3) {$\cdots$};
\node at (0.46,-1.1) {$\cdots$};
 \draw [decorate,decoration={brace,amplitude=4pt},xshift=0cm,yshift=0pt]
      (-1,1.5) -- (0,1.5) node [midway,above,xshift=0 cm] {$k-j$};
       \draw [decorate,decoration={brace,amplitude=4pt},xshift=0cm,yshift=0pt]
      (-1.7,0.5) -- (0.6,0.5) node [midway,above,xshift=0 cm] {$n$};
       \draw [decorate,decoration={brace,amplitude=4pt,mirror},xshift=0cm,yshift=0pt]
      (-0.05,-1.25) -- (0.85,-1.25) node [midway,below,xshift=0 cm] {$j$};

    \node at (0,0) {\includegraphics[scale=1.5]{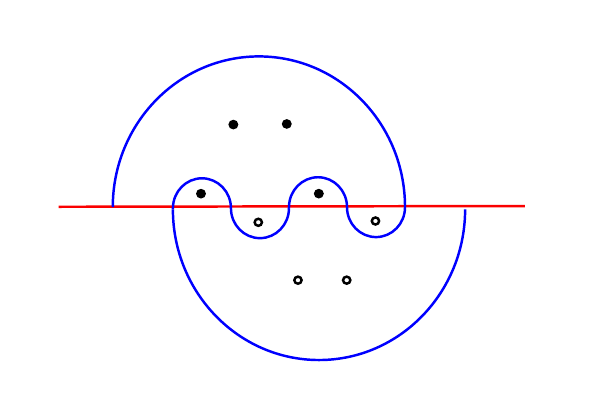}};
\end{tikzpicture}
}
\end{minipage}%
\begin{minipage}{.5\linewidth}
\subfloat[]{
    \centering
    \begin{tikzpicture}\label{subfig:cnk}
      \begin{scope}[thin, black!40!white]  
      \foreach \i in {-1,...,5}
   {
   \draw  (-0.75, 0.5+\i) -- (5.75, 0.5+\i);
     \draw  (0.5+\i, -0.75) -- (0.5+\i, 5.75);
       }
      \end{scope}
         \filldraw (5, 0) circle (2pt) node[] (){};
         \node at (5,-0.3) {\tiny$x^{(1)}_{4}$};
          \filldraw (5, 1) circle (2pt) node[] (){};
           \node at (5.3,0.7) {\tiny$x^{(1)}_{3}$};
           \filldraw (4, 1) circle (2pt) node[] (){};
              \node at (4,0.7) {\tiny$x^{(1)}_{2}$};
            \filldraw (4, 1.8) circle (2pt) node[] (){};
              \node at (4.3,1.5) {\tiny$x^{(1)}_{1}$};
             \filldraw (3, 1.8) circle (2pt) node[] (){};
               \node at (3.1,1.5) {\tiny$x^{(1)}_{0}$};
                \filldraw (4, 2.2) circle (2pt) node[] (){};
                    \node at (4.4,2.3) {\tiny$x^{(2)}_{4}$};
                   \filldraw (4, 3) circle (2pt) node[] (){};
                   \node at (4.4,3.3) {\tiny$x^{(2)}_{3}$};
                   \filldraw (3, 3) circle (2pt) node[] (){};
                                      \node at (3.4,3.3) {\tiny$x^{(2)}_{2}$};

                    \filldraw (0, 5) circle (2pt) node[] (){};
                     \node at (-0.3,5) {\tiny$x^{(3)}_{0}$};
               \filldraw (1, 5) circle (2pt) node[] (){};
                    \node at (0.7,5.3) {\tiny$x^{(3)}_{1}$};
           \filldraw (1, 4) circle (2pt) node[] (){};
            \node at (0.7,4) {\tiny$x^{(3)}_{2}$};
            \filldraw ( 1.8,4) circle (2pt) node[] (){};
                   \node at (1.6,4.3) {\tiny$x^{(3)}_{3}$};
             \filldraw ( 1.8,3) circle (2pt) node[] (){};
                         \node at (1.5,3.1) {\tiny$x^{(3)}_{4}$};
                \filldraw ( 2.2,4) circle (2pt) node[] (){};
                   \node at (2.6,4.3) {\tiny$x^{(2)}_{0}$};
                   \filldraw ( 3,4) circle (2pt) node[] (){};
                    \node at (3.4,4.3) {\tiny$x^{(2)}_{1}$};

  \filldraw (2.2, 0) circle (2pt) node[] (){};
                     \node at (1.9,0) {\small$y_1$};
    \filldraw (0, 2.2) circle (2pt) node[] (){};
      \node at (0,1.9) {\small$y_2$};

     \draw  (2.2, 0) -- (5, 0);
     \draw  (2.2, 0) -- (4, 1);
     \draw  (2.2, 0) -- (3, 1.8);
         \draw  (2.2, 0) -- (4, 2.2);
     \draw  (2.2, 0) -- (3, 3);
       \draw  (2.2, 0) -- (2.2, 4);

         \draw  (0,2.2) -- (0, 5);
     \draw  (0,2.2) -- (1, 4);
     \draw  (0,2.2) -- ( 1.8,3);
         \draw  (0,2.2) -- (4, 2.2);
     \draw  (0,2.2) -- (3, 3);
       \draw  (0,2.2) -- (2.2, 4);

         \draw  (1,5) -- (0, 5);
         \draw  (1,5) -- (1, 4);
         \draw  (1,4) -- (1.8, 4);
          \draw  (1.8,3) -- (1.8, 4);

          \draw (3,4) -- (2.2, 4);
            \draw (3,4) -- (3, 3);
             \draw (4,3) -- (3, 3);
             \draw (4,3) -- (4, 2.2);

                   \draw  (5,1) -- (5, 0);
         \draw  (5,1) -- (4, 1);
         \draw  (4,1) -- (4,1.8);
          \draw  (4,1.8) -- (3,1.8);
        
    \end{tikzpicture}
    }   
\end{minipage}
\caption{On the left, the  block $H(n+j)$ with $1\leq j\leq k-1$. On the right, the complex $C_{2,4}.$}
\label{fig:adblock}
\end{figure}
The following proposition describes the knot Floer complex $ \CFKi(S^3,K^{+}_{n,n+k})$ for $n>0, k\geq 0$. The $k=1$ case of Proposition \ref{prop:class1} is the content of \cite[Thoerem 3.3.14]{jhales}, which describes the knot Floer complex of the family of knots $K_n,$ which in turn implies Theorem \ref{thm:staircase}.  (To be precise, \cite[Thoerem 3.3.14]{jhales} describes the knot Floer complex of the mirror $-K_n$.)  
\begin{proposition}\label{prop:class1}
    For $n>0, k\geq 0$, up to homotopy equivalence the knot Floer complex $ \CFKi(S^3,K^{+}_{n,n+k})$  is given by the following. 
    \begin{itemize}
        \item When $k=0,$ 
        \[
        \CFKi(S^3,K^{+}_{n,n})\cong C_n \oplus 2\left(\bigoplus^{n-1}_{s=1}D_s \right);
        \]
        \item When $k=1,$ 
        \[
        \CFKi(S^3,K^{+}_{n,n+1})\cong  C_n \oplus D_n \oplus 2\left(\bigoplus^{n-1}_{s=1}D_s\right);
        \]
        \item When $k=2,$ 
        \[
        \CFKi(S^3,K^{+}_{n,n+2})\cong C_{n} \oplus 2\left(\bigoplus^{n}_{s=1}D_s\right);
        \]
        \item When $k\geq 2,$ 
        \[
        \CFKi(S^3,K^{+}_{n,n+k})\cong C_{n,k} \oplus 2\left(\bigoplus^{n}_{s=1}D_s\right).
        \]
    \end{itemize}
    Moreover, there exists a choice of basis, such that when $k=0,$ for each $1\leq s \leq n-1$ both $D_s$ are supported in filtration level $(f_{n,s},f_{n,s}).$ When $k>0,$ for $1\leq s \leq n$, each pair of $D_s$ is supported in filtration levels
    \begin{align*}
          &(f_{n,s},  f_{n,s}) + \left(\frac{(k-1)(k-2)}{2}, -(n-s+1)k +1\right) \hspace{2em} and\\
       &(f_{n,s},  f_{n,s}) + \left(-(n-s+1)k +1, \frac{(k-1)(k-2)}{2}\right), 
    \end{align*}
   respectively, except for when $k=1$ the single copy of $D_n$ is supported in $(0,0),$ where 
    \begin{align}
       f_{n,s} =  -\frac{(n-s)(n-s-1)}{2}.
    \end{align}
    Under this basis, each summand $D_s$ is supported in Maslov grading $-1$;  each $0$-graded generator has one $-$ marking and each $+1$-graded generator has one $+$ marking.
\end{proposition}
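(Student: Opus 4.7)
The plan is to build on Proposition \ref{prop:n1n2}, which already reduces the computation of $\CFKi(S^3,K^+_{n,n+k})$ to the quotient complex $G(n)\cong\bigcup_{j=0}^{k}H(n+j)$, after splitting off $2(\bigoplus_{s=1}^{n-1}D_s)$. First I would handle the chain-homotopy-type assertion by a case analysis in $k$. For $k=0$, $G(n)=H(n)$, and the same change-of-basis argument used in Lemma \ref{le:hs} identifies this with $C_n$ (since the ``top outmost arc'' has been quotiented out and only the staircase remains). For $k=1$, Lemma \ref{le:hs} (equivalently \ref{le:hminus}) splits off a single $D_n$ from $G(n)=H(n)\cup H(n+1)$, leaving a complex isomorphic to $C_n$. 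For $k=2$, Lemmas \ref{le:hs} and \ref{le:hminus} apply simultaneously (the blocks $H(n)$ and $H(n+2)$ are disjoint), yielding two $D_n$ summands and quotient $G(n+1)\cong C_{n,2}=C_n$ by Lemma \ref{le:cnk}. For $k\geq 3$ the same simultaneous splitting applies and the residual quotient $G(n+1)\cong C_{n,k}$ is exactly what Lemma \ref{le:cnk} provides.

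Next I would pin down the filtration data. The idea is to fix, once and for all, an overall placement of the basis in $(\II,\JJ)$-coordinates by normalizing one generator in $H(n)$ (for instance the ``apex'' $y$ of $D_n$, or equivalently the central staircase generator of $C_n$) using the symmetry of $\CFKi$. The filtration shifts within each block $H(n+j)$ are tracked by counting basepoints in the bigons shown in Figure \ref{fig:block}: each $(1,0)$-step crosses an $\alpha$-strand above a $z$-basepoint, each $(0,1)$-step above a $w$-basepoint, and the long ``hypotenuse'' arcs to $y$ carry the counts $(i+j,n-i)$ and $(i,k-j+n-i)$ recorded in Definition \ref{def:cnk}. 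Combining these with the offsets between consecutive blocks, from \eqref{eq:filshift1} and its analog, gives the formulas $f_{n,s}=-(n-s)(n-s-1)/2$ for the ``diagonal'' placement in the $k=0$ case, and the asymmetric shifts by $\bigl(\tfrac{(k-1)(k-2)}{2},-(n-s+1)k+1\bigr)$ and its reflection when $k>0$ (the two copies come from $H(s)$ vs $H(n+n+k-s)$). The $k=1$ exception for $D_n$ lives at $(0,0)$ by symmetry, since only one block $H(n)$ contributes.

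For the Maslov grading and marking claims, I would argue as follows. The apex $y$ of $D_s$ supports the local top homology generator in each box, so setting its grading to $-1$ forces the four corner generators $x_{2i}$ (i.e.~the ones with even index that sit at the ``bottom'' of two length-one arrows) to be in grading $0$ and the odd-index $x_{2i+1}$ to be in grading $+1$. Then the $+$ and $-$ markings can be read off from Definition \ref{def:marked}: the ``closest'' intersection point to the images of $\gamma_\pm$ under $\tau^{2n_2}$ lies in each block at a specific even/odd position, which (via Figure \ref{fig:block} and Lemma \ref{le:sigma2}) lands on a grading-$0$ generator for $-$ markings and a grading-$+1$ generator for $+$ markings, as required.

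The main obstacle I expect is not any individual step but the bookkeeping of filtration shifts: the offset between the summands coming from $H(s)$ and $H(n_1+n_2-s)$ involves both the cumulative shifts from blocks $H(s+1),\dots,H(n-1)$ and the ``$k$-dependent'' twist of $\bigl(\tfrac{(k-1)(k-2)}{2},\cdot\bigr)$ coming from the $k-1$ intermediate blocks straddled between $H(n)$ and $H(n+k)$. I would organize this by first establishing, inductively in $s$ using Lemma \ref{le:simul}, a clean formula for the relative filtration of the apex of $H(s)$ versus the apex of $H(s+1)$, and then summing these from $s=1$ up to $s=n-1$ (respectively from $s=n+k+1$ down to $s=n_1+n_2-1$). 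After that the asserted filtration levels should drop out by comparison with the known central complex ($C_n$ when $k\leq 2$, $C_{n,k}$ when $k\geq 2$).
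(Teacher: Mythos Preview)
Your overall strategy matches the paper's: invoke Proposition \ref{prop:n1n2} and the case discussion before Lemma \ref{le:cnk} for the chain-homotopy type, then compute the filtration support by tracking the shift $-(n-s,\,n+k-s)$ between consecutive apices $y_s$ and $y_{s-1}$ and summing from the normalized central summand outward. That part is fine and is essentially what the paper does.

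The genuine gap is in your Maslov grading argument. You write that ``the apex $y$ of $D_s$ supports the local top homology generator in each box, so setting its grading to $-1$ forces \dots''. This is incorrect on two counts. First, each $D_s$ is acyclic, so $y$ does not support any homology class; there is no ``local top homology generator'' to speak of. Second, the Maslov grading on $\CFKi$ is absolute, not a normalization you may set: it is pinned down by the requirement that $H_*(\CFKi(S^3,K))\cong HF^\infty(S^3)$ sit in the correct degree. So you must \emph{derive} that $y$ lies in grading $-1$, not assume it.

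The paper's argument goes the other way around. One first observes from the block diagrams that all the $y_s$ share a common Maslov grading $t$ and all the $x^{(s)}_0$ share grading $t+1$. Then one checks that the homogeneous element $\sum_s x^{(s)}_0$ is a cycle generating $H_*(\CFKi)\cong \F[U,U^{-1}]$, hence must lie in Maslov grading $0$. This forces $t=-1$, which is the statement to be proved. Without this global input you cannot conclude the grading claim; your sketch needs to replace the ``local homology'' sentence with this argument.
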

\begin{proof}
    The statements regarding the homotopy equivalence type of $ \CFKi(S^3,K^{+}_{n,n+k})$ follow from Proposition \ref{prop:n1n2}, Lemma \ref{le:cnk} and the discussion between. We are left with the statements regarding the Maslov grading, the marked basis and the filtration levels, which can be proved by examining  the process of splitting off $D_s$ summands more closely.

   Since we have that for $s\leq n,$
   \begin{align*}
        \dij (x^{(s)}_{2i}, y_{s-1}) &= (i,s+1-i) + (n -s, n + k -s) \\
        \dij (x^{(s)}_{2i}, y_s) &= (i,s+1-i) 
   \end{align*}
   the filtration shift from $y_s$ to $y_{s-1}$ (which supports $D_s$ and $D_{s-1}$ respectively) is  
   \begin{align}
       -(n -s, n + k -s)  \hspace{5em} s\leq n. \label{eq:suppshift}
   \end{align}
For a fixed basis $B$ of a complex $C$, we say $B$ is supported in the filtration level $(a,b)$ if the generator with the lowest $i$ (resp.~$j$) filtration in $B$ is in filtration level $a$ (resp.~$b$).

When $k=0,$ fix a basis such that the summand $C_n$ is supported in filtration level $(0,0).$ By \eqref{eq:filshift1} both copies of $D_{n-1}$ are supported in $(0,0).$ By \eqref{eq:suppshift}, for $1\leq s \leq n-1$ the copy of $D_s$ that comes from $H(s)$ is supported in filtration
\begin{align*}
    \left( -\sum_{\l=0}^{n-s-1} \l, -\sum_{\l=0}^{n-s-1} \l \right) = (f_{n,s},f_{n,s}).
\end{align*}
The filtration levels of the remaining complex follows from the symmetry of the knot Floer complex. When $k=1,$ fix a basis such that the single copy of $D_n$  is supported in $(0,0).$ (The summand $C_n$ is supported in $(0,0)$ as well.) The filtration levels in this case follows in the exact same way as above. 

When $k\geq 2,$ fix a basis such that the summand $C_{n,k}$ is supported in filtration level $(0,0).$ It is straightforward to check that $x^{(1)}_{0}$ is supported in the filtration level
\begin{align*}
    \left( \sum_{\l=1}^{k-2} \l, n \right) = \left(\frac{(k-1)(k-2)}{2},n \right).
\end{align*}
Then by \eqref{eq:filshift1} the copy of $D_n$ that comes from $H(n)$ is supported in
$(\frac{(k-1)(k-2)}{2},-k+1)$. The filtration levels of the remaining complex follows in the same way as above.

 For the statement regarding the Maslov grading, observe (for example, from Figure \ref{fig:consecutive}) that every $y_s$ generator in some $H(s)$ is supported in the same Maslov grading $t$, and every $x^{(s)}_0$ generator in some $H(s)$ is supported in the same Maslov grading $t+1.$  On the other hand, the homogeneous element $\sum_{s} x^{(s)}_0$ is a generator of $H_*(\CFKi(S^3,K^{+}_{n,n+k})) \cong HF^\infty(S^3)$ and therefore is supported in Maslov grading $0.$ It follows that $t=-1.$
   The statement regarding the marked basis can be readily read off by Figure \ref{fig:block} \subref{subfig:block2}, using Definition \ref{def:marked}.
\end{proof}
\subsection{Case $K^{-}([2n_1,1,2n_2])$ with $n_1,n_2>0$}\label{subsec:class2}
  Similarly let us write $K^{-}_{n_1,n_2}$ for the knot $K^{-}([2n_1,1,2n_2])$. We still consider the action $\tau^{2n_2}\sigma \tau^{2n_1}$ on the $(1,1)$ diagram, with the difference being the starting slope is $+1$. The resulting $(1,1)$ diagram is depicted in Figure \ref{fig:block'}\subref{subfig:block'0}. Compare with the final diagram in  Figure \ref{fig:11diagram}. The process for determining $\CFKi(S^3,K^{-}_{n_1,n_2})$ is almost identical to the process described in the previous section, so we will only include  the key steps, being much less elaborate.

To start, we can similarly define the $s$-th block $H'(s)$ for $1\leq s \leq n_1+n_2-1$; also let the chain complex $G'(1) = \CFKi(S^3,K^{-}_{n_1,n_2})$, and $G'(i+1)=G'(i)/(\wH'(i)\cup \wH'(n_1+n_2-i))$, where $\wH'(i) = H'(i) \setminus \{ \text{right end point} \} $ and $\wH'(n_1+n_2-i) = H'(n_1+n_2-i) \setminus \{ \text{left end point} \} $ for $1\leq i \leq \floor{\frac{n_1+n_2-1}{2}}.$  We add the apostrophe to differentiate from the complexes defined in the previous section. 

\begin{lemma}
    For $1\leq s \leq n_1+n_2-1$, each block $H'(s)$ corresponds to the diagram depicted in Figure \ref{fig:block'} \subref{subfig:block'2}. 
\end{lemma}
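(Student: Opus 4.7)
The approach mirrors the proof of the analogous statement for $K^+$ in Section \ref{subsec:class1}: since the only input of that proof was the final lifted diagram of Figure \ref{fig:11diagram}, it suffices to produce the corresponding picture in the $K^-$ setting and then read off the basepoint counts in each half-plane.

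The plan is to run the Ozsv\'{a}th--Szab\'{o}--Hales algorithm of Section \ref{subsec:algo}, but now starting from a genus-one doubly pointed Heegaard diagram whose $\beta$-curve has slope $+1$ rather than $-1$ (since the surgery coefficient on the upper link component is $-1$). Applying $\tau^{2n_1}$, then $\sigma$, then $\tau^{2n_2}$ yields the lifted $(1,1)$ diagram shown in Figure \ref{fig:block'}\subref{subfig:block'0}; this is a direct computation parallel to the one producing the final picture in Figure \ref{fig:11diagram}, and involves only keeping track of where each equivariant finger move introduces copies of the $z$ and $w$ basepoints.

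Next, I would fix lifts $\wa$ of $\alpha$ and $\wb$ of $\beta$ in the universal cover $\R^2$, and traverse $\wb$ from left to right. After isotoping away bigons containing no basepoints, $\wb$ meets $n_1+n_2-1$ consecutive lifts $\alpha_1,\dots,\alpha_{n_1+n_2-1}$ of $\alpha$ in a single iteration, and $n_1+n_2-1$ such iterations are required to exhaust $\wa\cap\wb$. The $s$-th iteration, with $\wa$ identified with $\alpha_s$, is by definition the block $H'(s)$.

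Finally, the content of the lemma is a combinatorial count of basepoints in the inner and outer arcs of $H'(s)$, which is read directly off of Figure \ref{fig:block'}\subref{subfig:block'0}. I expect the counts for inner and outer arcs to take the same piecewise form as in the $K^+$ case --- namely $\min\{n_1,s\}+\min\{n_2,s\}-s$ along each inner arc, with the outer arcs contributing $\max\{n_i-s,0\}$ or $\max\{s-n_i,0\}$ terms --- possibly with the roles of upper/lower or left/right swapped because the starting slope has been inverted. The main (though minor) obstacle is precisely this bookkeeping: one must verify that the slope inversion at the start of the algorithm exchanges exactly the pairs of counts expected in Figure \ref{fig:block'}\subref{subfig:block'2}, and in particular that the new positions of the arcs $\gamma_+$ and $\gamma_-$ (of slopes determined by Definition \ref{def:gamma}) are consistent with the diagram used later to determine the $\pm$-marked basis.
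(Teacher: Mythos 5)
Your proposal is correct and matches the paper's (largely implicit) argument: the paper likewise obtains the $(1,1)$ diagram of Figure \ref{fig:block'}\subref{subfig:block'0} by running the same action $\tau^{2n_2}\sigma\tau^{2n_1}$ with starting slope $+1$, and reads each block $H'(s)$ directly off that diagram, exactly as the $K^+$ case was read off the final diagram of Figure \ref{fig:11diagram}. Your anticipated bookkeeping is also the right one: the inner-arc counts $\min\{n_1,s\}+\min\{n_2,s\}-s$ are unchanged and the outer-arc counts $\max\{n_i-s,0\}$, $\max\{s-n_i,0\}$ simply trade upper and lower half-planes relative to Figure \ref{fig:block}\subref{subfig:block2}.
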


\begin{figure}[htb!]
    \centering
 \begin{minipage}{.5\linewidth}
 \centering
\subfloat[The $(1,1)$ diagram of $K^{-}_{n_1,n_2}$.]{
  \begin{tikzpicture}[scale=0.8]
  \begin{scope}[thin, black!0!white]
	  \draw  (-5, 0) -- (5,0);
\end{scope}
      \node at (0.81,0.52) {\includegraphics[scale=0.272]{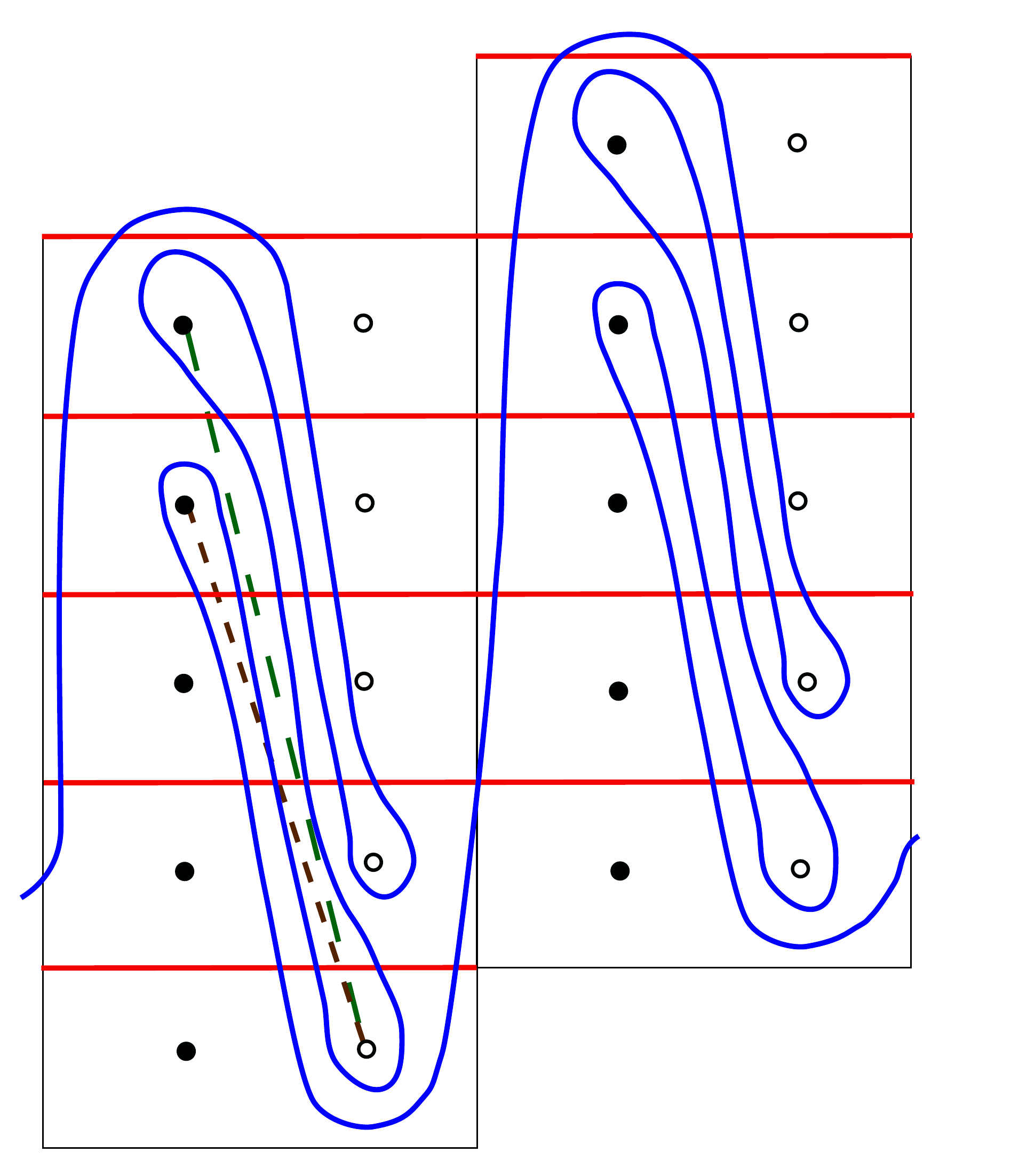}};
 \draw [decorate,decoration={brace,amplitude=4pt},xshift=0.5cm,yshift=0pt]
      (-0.21,-1.2) -- (-0.21,-2.6) node [midway,right,xshift=0.1cm,yshift=0.1cm] {\small$n_1$};
 \draw [decorate,decoration={brace,amplitude=4pt},xshift=0.5cm,yshift=0pt]
      (-.6,2.4) -- (-.6,-0.4) node [midway,right,xshift=0cm,yshift=5pt] {\small$n_2$};
       \draw [decorate,decoration={brace,amplitude=4pt,mirror},xshift=0.5cm,yshift=0pt]
      (-1.9,2.2) -- (-1.9,1) node [midway,left,xshift=0.05cm,yshift=5pt] {\small$n_1$};
       \draw [decorate,decoration={brace,amplitude=4pt,mirror},xshift=0.5cm,yshift=0pt]
      (-1.9,0) -- (-1.9,-2.7) node [midway,left,xshift=0cm] {\small$n_2$};
      \node [text=OliveGreen] at (-1,2) {$\gamma_{-}$};
         \node [text=Brown] at (-1.6,0.7) {$\gamma_{+}$};
  \end{tikzpicture}
  \label{subfig:block'0}}
\end{minipage}%
  \begin{minipage}{.5\linewidth}
  \centering
\subfloat[The first block $H'(1).$]{
\begin{tikzpicture}
    \begin{scope}[thin, black!0!white]
	  \draw  (-5, 0) -- (5,0);
\end{scope}
\node at (0.1,-1.16) {\tiny$\cdots$};
\node at (1.3,-1.16) {\tiny$\cdots$};
     \draw [decorate,decoration={brace,amplitude=4pt,mirror},xshift=0 cm,yshift=-0.5pt]
      (0.9,-1.25) -- (1.65,-1.25) node [midway,below,xshift=0 cm,yshift=-2pt] {\tiny$n_2 - 1$};
     \draw [decorate,decoration={brace,amplitude=4pt,mirror},xshift=0 cm,yshift=-0.5pt]
    (-0.3,-1.25) -- (0.45,-1.25) node [midway,below,xshift=0.15 cm,yshift=-2pt] {\tiny$n_1 - 1$};
    \node at (0,0) {\includegraphics[scale=1.8]{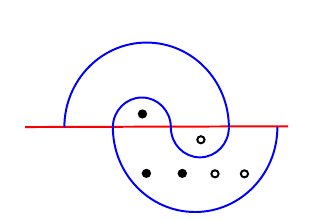}\label{subfig:block_1}};
\end{tikzpicture}
}
\end{minipage}\\
\begin{minipage}{\linewidth}
\centering
\subfloat[The $s$-th block $H'(s)$ with $1\leq s \leq n_1+n_2-1.$]{
\begin{tikzpicture}[scale=1.2]
\begin{scope}
    \begin{scope}[thin, black!0!white]
	  \draw  (-5, 0) -- (5,0);
      \end{scope}
\end{scope}
\node at (0.42,1.1) {$\cdots$};
\node at (-1.42,1.1) {$\cdots$};
\node at (-0.59,-1.05) {$\cdots$};
\node at (1.4,-1.04) {$\cdots$};

 \draw [decorate,decoration={brace,amplitude=4pt},xshift=0 cm,yshift=0.5pt]
      (-0.15,1.25) -- (0.9,1.25) node [midway,above,xshift=0.5cm,yshift=2pt] {\tiny$\max\{s-n_1,0\}$};
     \draw [decorate,decoration={brace,amplitude=4pt},xshift=0 cm,yshift=0.5pt]
    (-2,1.25) -- (-0.95,1.25) node [midway,above,xshift=-0.2cm,yshift=2pt] {\tiny$\max\{s-n_2,0\}$};

 \draw [decorate,decoration={brace,amplitude=4pt},xshift=0 cm,yshift=0.5pt]
      (-1.7,0.45) -- (0.7,0.45) node [midway,above,xshift=-0.1cm,yshift=1pt] {\tiny$\min\{n_1,s\} + \min\{n_2,s\} -s$};
\draw [decorate,decoration={brace,amplitude=4pt,mirror},xshift=0 cm,yshift=-0.5pt]
      (-0.85,-0.36) -- (1.59,-0.36) node [midway,below,xshift=-0.1cm,yshift=-1pt] {\tiny$\min\{n_1,s\} + \min\{n_2,s\} -s$};

     \draw [decorate,decoration={brace,amplitude=4pt,mirror},xshift=0 cm,yshift=-0.5pt]
      (0.89,-1.1) -- (1.9,-1.1) node [midway,below,xshift=0 cm,yshift=-2pt] {\tiny$\max\{n_2-s,0\}$};
     \draw [decorate,decoration={brace,amplitude=4pt,mirror},xshift=0 cm,yshift=-0.5pt]
    (-1.15,-1.1) -- (-0.1,-1.1) node [midway,below,xshift=-0.2cm,yshift=-2pt] {\tiny$\max\{n_1-s,0\}$};
   \node at (-2.1,-0.4) [text=OliveGreen] {$\gamma_-$};
    \node at (0.1,-0.2) [text=OliveGreen] {$\gamma_-$};
 \node at (-0.7,0.2) [text=Brown] {$\gamma_+$};
\node at (2,-0.2) [text=OliveGreen] {$\gamma_-$};
 \node at (1,0.2) [text=Brown] {$\gamma_+$};
    \node at (0,0) {\includegraphics[scale=1.8]{block2}\label{subfig:block'2}};
\end{tikzpicture}
}
\end{minipage}
\caption{}
\label{fig:block'}
\end{figure}

Again due to \eqref{eq:relation1}, we have $K^{-}_{n_1,n_2} = K^{-}_{n_2,n_1}$.  Therefore from now on we only consider  knots of the form $K^{-}_{n,n + k}$ with $n>0, k\geq 0$. Similarly as before, we split off $D_s$ summands iteratively until we obtain a small complex. This process in the case of $K^{-}_{n,n + k}$ is somewhat more straightforward  than that in the previous section. We depicted the two consecutive blocks in Figure \ref{fig:consecutive_}, but in fact we  need only to consider the first block $H'(s)$.

\begin{lemma} \label{le:simul'}
    For $n>0, k \geq 0$, 
    \begin{align*}
        \CFKi(S^3, K^{-}_{n,n+k}) \cong \begin{cases}
            G(n-1) \oplus 2(\bigoplus_{s=1}^{n-1}D_s)   &\hspace{1em} k=0  \\
            G(n) \oplus 2(\bigoplus_{s=1}^{n}D_s) &\hspace{1em} k\geq 1 
        \end{cases}
    \end{align*}
\end{lemma}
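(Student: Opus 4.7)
The plan is to follow the iterative splitting strategy employed in the $K^+$ case (Lemmas \ref{le:hs}, \ref{le:hminus}, \ref{le:simul} and Proposition \ref{prop:n1n2}), adapted to the primed complexes. First I would establish a primed analog of Lemma \ref{le:hs}: for $1\leq s \leq \min\{n_1, n_2, \floor{(n_1+n_2-1)/2}\}$, there is a filtered change of basis of $G'(s)$ under which $\wH'(s)$ splits off as a direct summand isomorphic to $D_s$. The proof is formally identical to that of Lemma \ref{le:hs}: examine two consecutive blocks $H'(s)$ and $H'(s+1)$, label the intersection points $x_0, \ldots, x_{2s}, y, x'_0, \ldots, x'_{2s'}, y'$ from left to right, and perform the change of basis $x'_i \mapsto x'_i + x_i$ (together with $x'_{2(s+1)} \mapsto x'_{2(s+1)} + x_{2s}$ when $s < \min\{n_1,n_2\}$) to kill the arrows connecting the two blocks. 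That this is filtered is verified by reading off the filtration shifts from Figure \ref{fig:block'}\subref{subfig:block'2}, whose combinatorial data agree with Figure \ref{fig:block}\subref{subfig:block2} up to an interchange of the top and bottom basepoint counts. The analog of Lemma \ref{le:hminus} for $\wH'(n_1+n_2-s)$ then follows from the symmetry of the knot Floer complex.

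Next, when $s < \min\{n_1, n_2\}$ the two subcomplexes $\wH'(s)$ and $\wH'(n_1+n_2-s)$ are disjoint, so both splittings can be performed simultaneously, yielding $G'(s) \cong G'(s+1) \oplus 2D_s$; this is the primed analog of Lemma \ref{le:simul}. Iterating from $G'(1)=\CFKi(S^3,K^{-}_{n,n+k})$ produces $2\bigoplus_{s=1}^{N} D_s$ summands together with a quotient $G'(N+1)$, where $N$ is the largest admissible index. The distinction between $k=0$ and $k\geq 1$ comes from how far this iteration can be carried together with whether a final single-sided splitting is available: for $k=0$ the middle blocks $H'(n-1)$, $H'(n)$, $H'(n+1)$ overlap, obstructing one round of the simultaneous splitting and leaving the residual complex denoted $G(n-1)$ in the statement; for $k\geq 1$ the blocks $H'(n)$ and $H'(n+k)$ remain disjoint, so the $s=\min\{n_1,n_2\}$ branch of the primed Lemma \ref{le:hs} can be applied once to each, contributing the extra pair $2D_n$ and leaving the quotient denoted $G(n)$.

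The main technical obstacle will be a careful treatment of the endpoint blocks $H'(1)$ and $H'(n_1+n_2-1)$, whose diagrams (see Figure \ref{fig:block'}\subref{subfig:block_1}) differ from the generic interior blocks because the outermost arc on one side has been quotiented away; one must verify that these modifications neither obstruct the change-of-basis argument above nor alter the isomorphism type of the split summand, which should still be $D_s$ with the correct filtration data. A second delicate point is the precise identification of the residual quotient complex with $G(n-1)$ or $G(n)$: this is expected to follow from the block-level correspondence $H'(s)\cong H(n_1+n_2-s)$ visible by comparing Figures \ref{fig:block}\subref{subfig:block2} and \ref{fig:block'}\subref{subfig:block'2}, so that the surviving collection of blocks in $G'(N+1)$ matches the definition of the $G$-complex from Section \ref{subsec:class1}.
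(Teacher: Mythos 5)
Your overall shape (split off $D_s$ summands block by block, then name the residual quotient) matches the paper, but the key technical step you import does not survive the transition to the primed diagrams, so as written the argument has a genuine gap. The proof is not ``formally identical'' to Lemma \ref{le:hs}: in the $K^{-}$ case consecutive blocks are not disjoint complexes joined by arrows $\partial x'_{2i}=y+y'$; they \emph{share} the endpoint generator $y_{s+1}=H'(s)\cap H'(s+1)$, and every even-index generator of a block hits both of its endpoints, $\partial x^{(s)}_{2i}=y_s+y_{s+1}$. Consequently your change of basis $x^{(s+1)}_i\mapsto x^{(s+1)}_i+x^{(s)}_i$ disconnects nothing: it replaces $\partial x^{(s+1)}_{2i}=y_{s+1}+y_{s+2}$ by $y_s+y_{s+2}$, creating a new arrow into $\wH'(s)$, while the arrows $x^{(s)}_{2i}\to y_{s+1}$ out of $\wH'(s)$ are untouched, so $\wH'(s)$ is still not a summand. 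The paper's proof uses a different and simpler move: change basis on the shared generators, $y_s\mapsto y_s+y_{s+1}$, which turns $\partial x^{(s)}_{2i}$ into a single new basis element and splits $\wH'(s)\cong D_s$ off directly; it is filtered because $\dij(x^{(s)}_{2i},y_{s+1})-\dij(x^{(s)}_{2i},y_s)=(n+k-s,\,n-s)\geq 0$ for $s\leq n$, and no induction through consecutive blocks (nor any separate treatment of the endpoint blocks) is needed --- all splittings for $1\leq s\leq n-1$, their mirror images, and $\wH'(n)\cup\wH'(n+k)$ when $k\geq 1$ can be performed at once.

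Two further inaccuracies: your explanation of the $k=0$ versus $k\geq1$ dichotomy is off --- adjacent primed blocks always overlap in one generator, and the relevant point is that for $k=0$ the middle block is its own mirror, $\wH'(n)=\wH'(2n+k-n)$, so no extra pair $2D_n$ can be split off, whereas for $k\geq1$ the blocks $\wH'(n)$ and $\wH'(n+k)$ are distinct and disjoint and contribute $2D_n$. Also, the residual quotient is by construction assembled from the remaining \emph{primed} blocks (it is the $G'$-complex of this subsection, later identified with $C_0\oplus D_n$, $C_0$, or $C'_{n,k}$ in Proposition \ref{prop:class2}); your proposed identification of it with the unprimed $G$-complexes of Section \ref{subsec:class1} via a block correspondence $H'(s)\cong H(n_1+n_2-s)$ is neither needed nor correct, as the residuals in the two cases are genuinely different (compare Proposition \ref{prop:class1} with Proposition \ref{prop:class2}).
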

\begin{proof}
    For $1\leq s \leq n$, label the generators in $H'(s)$ from left to right by $y_s, x^{(s)}_{2s}, \cdots, x^{(s)}_{0}, y_{s+1} $ where $y_{s+1} =  H'(s) \cap H'(s+1).$ The differentials are
    \begin{align*}
        \d x^{(s)}_{2i + 1} &=  x^{(s)}_{2i } + x^{(s)}_{2i +2 }  \hspace{3em}  0\leq i \leq s-1\\
        \d x^{(s)}_{2i } &= y_s + y_{s+1} \hspace{5.5em}  0\leq i \leq s
        \intertext{with filtration shifts}
        \dij(x^{(s)}_{2i }, y_s) &= (i, s-i)  \\
        \dij(x^{(s)}_{2i }, y_{s+1}) &= (i, s-i) + (n+k-s, n-s)
        \intertext{for $ 0\leq i \leq s.$ Therefore  performing the filtered change of basis}
        y_s &\mapsto y_s + y_{s+1}
    \end{align*}
    splits off $\wH'(s) = H'(s) \setminus \{y_{s+1}\} \cong D_s$ as a summand.  We can split off $\wH'(s)$ and $\wH'(2n+k-s)$ for $1\leq s \leq n-1$ at the same time since $\wH'(s) \cap \wH'(2n+k-s) = \varnothing$. Due to the symmetry of the knot Floer complex $\wH'(2n+k-s)\cong D_s.$ When $k\geq 1,$ we can further split off $\wH'(n) \cup \wH'(n+k) \cong 2 D_n.$
\end{proof}

\begin{figure}[htb!]
   \centering
\begin{tikzpicture}
\node at (1.85,-1.2) {$\cdots$};
\node at (3.72,-1.2) {$\cdots$};

\node at (-3.5,-1.2) {$\cdots$};
\node at (-1.65,-1.2) {$\cdots$};

\node at (-0.9,0.4) {$x^{(s)}_0$};
\node at (-1.8 ,0.4) {$x^{(s)}_1$};
\node at (-3.5 ,0.4) {$\cdots$};
\node at (-5 ,0.4) {$x^{(s)}_{2s}$};

\node at (-0.5 ,-0.2) {$y_{s+1}$};
\node at (-5.6 ,-0.2) {$y_{s}$};
\node at (5.5 ,-0.2) {$y_{s+2}$};
\node at (4.7 ,0.4) {$x^{(s+1)}_0$};
\node at (3.7 ,0.4) {$x^{(s+1)}_1$};
\node at (1.9 ,0.4) {$\cdots$};
\node at (0.5 ,0.42) {$x^{(s+1)}_{2s'}$};

\draw [decorate,decoration={brace,amplitude=4pt,mirror},xshift=0cm,yshift=0pt]
      (1.35,-1.3) -- (2.35,-1.3) node [midway,below,xshift=0.1 cm] {\tiny$n+k-s-1$};
      \draw [decorate,decoration={brace,amplitude=4pt,mirror},xshift=0cm,yshift=0pt]
      (3.15,-1.3) -- (4.15,-1.3) node [midway,below,xshift=0.15 cm] {\tiny $n -s-1$};
    \node at (0,0) {\includegraphics[scale=1.5]{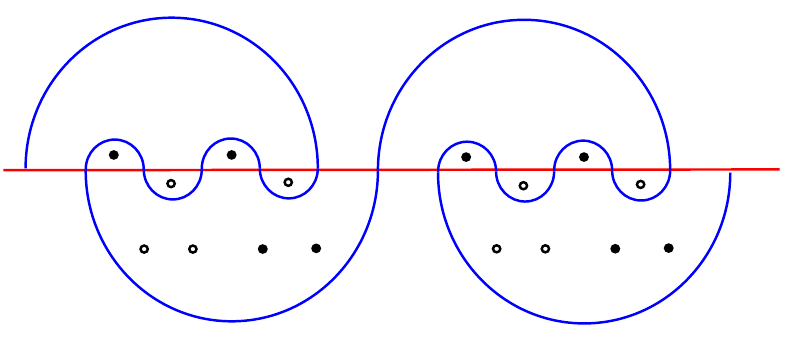}};

    \draw [decorate,decoration={brace,amplitude=4pt,mirror},xshift=0cm,yshift=0pt]
      (-4,-1.3) -- (-3,-1.3) node [midway,below,xshift=-0 cm] {\tiny$n+k-s$};
      \draw [decorate,decoration={brace,amplitude=4pt,mirror},xshift=0cm,yshift=0pt]
      (-2.2,-1.3) -- (-1.2,-1.3) node [midway,below,xshift=-0.1 cm] {\tiny $n -s$};
    \node at (0,0) {\includegraphics[scale=1.5]{consecutive_}};
\end{tikzpicture}
\caption{Two consecutive blocks $H'(s)$(left) and $H'(s+1)$(right) in $G'(s)$ for $1\leq s \leq n.$ There is either one basepoint or one of each kind in the upper half plane of $H'(n+1)$ when $s=n$.}
\label{fig:consecutive_}
\end{figure}
Recall 
    that $C_0$ is the complex generated by one element. 
\begin{definition}\label{def:c'nk}
    For $n>0, k\geq 2,$ let $C'_{n,k}$ be the complex generated by $\{x^{(j)}_{i} ~|~ 1\leq j \leq k-1, 0\leq i \leq 2n \} \cup \{ y_j ~|~ 1\leq j \leq k \}$ with differentials 
    \begin{align*}
          \partial x^{(j)}_{2i+1} &= x^{(j)}_{2i} + x^{(j)}_{2i+2} \hspace{2em} 0 \leq i \leq n-1 \\
    \partial x^{(j)}_{2i} &= y_{j} + y_{j+1}  \hspace{3em} 0 \leq i \leq n
    \intertext{with the filtration shifts}
    \dij (x^{(j)}_{2i+1},x^{(j)}_{2i} ) &= (1,0)\\
    \dij (x^{(j)}_{2i+1},x^{(j)}_{2i+2} ) &= (0,1)  \hspace{7em} 0 \leq i \leq n-1\\
    \dij (x^{(j)}_{2i},y_{j} ) &= (i,n-i + j)\\
    \dij (x^{(j)}_{2i},y_{j+1} ) &= (i + k-j, n-i)  \hspace{2em} 0 \leq i \leq n.
    \end{align*}
\end{definition}
The complex $C'_{1,3}$ is shown below as an example.
\[
\xymatrix{
&x^{(2)}_0 \ar[ddd]_{3} \ar[ld] & x^{(2)}_1 \ar[l] \ar[d] &&\\
y_3&&x^{(2)}_2 \ar[ll] \ar[ldd] &&\\
&&& x^{(1)}_{0} \ar[dd] \ar[lld] & x^{(1)}_{1} \ar[l] \ar[d] \\
&y_2 &&& x^{(1)}_{2}  \ar[lll]_{3}  \ar[ld] \\
&&& y_1 &
}
\]

\begin{proposition}\label{prop:class2}
    For $n>0, k\geq 0$, up to homotopy equivalence the knot Floer complex $ \CFKi(S^3,K^{-}_{n,n+k})$  is given by the following. 
    \begin{itemize}
        \item When $k=0,$ 
        \[
        \CFKi(S^3,K^{+}_{n,n})\cong C_0 \oplus D_n \oplus 2\left(\bigoplus^{n-1}_{s=1}D_s \right);
        \]
        \item When $k=1,$ 
        \[
        \CFKi(S^3,K^{+}_{n,n+1})\cong  C_0 \oplus 2\left(\bigoplus^{n}_{s=1}D_s\right);
        \]
        \item When $k\geq 2,$ 
        \[
        \CFKi(S^3,K^{+}_{n,n+k})\cong C'_{n,k} \oplus 2\left(\bigoplus^{n}_{s=1}D_s\right).
        \]
    \end{itemize}
    Moreover, there exists a choice of basis, such that $C_0$ is always supported in the filtration level $(0,0)$ and each pair of $D_s$ for $1\leq s \leq n$ is supported in filtration levels
    \begin{align*}
        &\left(  \big(\frac{k+1}{2} + n -s \big)k + \frac{(n-s)(n-s+1)}{2} ,  \frac{(n-s)(n-s+1)}{2}   \right) \qquad \text{and}\\
        &\left(   \frac{(n-s)(n-s+1)}{2} ,  \frac{(n-s)(n-s+1)}{2} +  \big(\frac{k+1}{2} + n -s \big)k  \right)
    \end{align*}
    respectively. Under this basis, each $C_0$ and $D_s$ summand are supported in Maslov grading $0$; each $+1$-graded generator has one $-$ marking and each $+2$-graded generator has one $+$ marking.
\end{proposition}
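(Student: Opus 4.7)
The plan is to parallel the strategy used for Proposition \ref{prop:class1}. By Lemma \ref{le:simul'}, the complex $\CFKi(S^3,K^{-}_{n,n+k})$ already splits off the $2(\bigoplus D_s)$ summands, so the chain homotopy classification reduces to identifying the remaining central quotient complex in each of the three cases.

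For the chain homotopy type: when $k=0,$ the central complex is $H'(n)$, with generators $y_n, x^{(n)}_{2n},\ldots, x^{(n)}_0, y_{n+1}$ satisfying $\d x^{(n)}_{2i}=y_n+y_{n+1}$ plus the staircase differentials on the $x$'s; the filtered change of basis $y_n\mapsto y_n+y_{n+1}$ leaves $y_{n+1}$ isolated (giving $C_0$) and turns the remainder into a single $D_n$ summand. When $k=1,$ applying Lemma \ref{le:simul'} once more to split off $\wH'(n)\cup \wH'(n+1)\cong 2D_n$ leaves only the shared endpoint $y_{n+1}$, giving $C_0$. When $k\geq 2,$ the further split $\wH'(n)\cup \wH'(n+k)\cong 2D_n$ leaves $H'(n+1)\cup\cdots\cup H'(n+k-1)$; after relabeling $x^{(n+j)}_i\mapsto x^{(j)}_i$ and $y_{n+j}\mapsto y_j$, reading the differentials directly off Figure \ref{fig:block'}\subref{subfig:block'2} should recover exactly $C'_{n,k}$ of Definition \ref{def:c'nk}.

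For the Maslov grading, I would reuse the argument from the last paragraph of the proof of Proposition \ref{prop:class1}: the cycle $\sum_j y_j$ generates $H_*(\CFKi(S^3,K^{-}_{n,n+k}))\cong \HFi(S^3)$ and so lies in Maslov grading $0$, which pins down the grading of every $y_j$ and hence of every $C_0$ and $D_s$ summand. The markings are read off Figure \ref{fig:block'}\subref{subfig:block'2} via Definition \ref{def:marked}, using that $\gamma_+$ has slope $-2(n+k)$ and $\gamma_-$ has slope $-2(n+k+1)$. For the filtration levels, I would normalize by placing $C_0$ at $(0,0)$ (or, when $k=0$, the lone $D_n$ at $(0,0)$), and then use that the filtration shift across consecutive blocks is read from the block widths in Figure \ref{fig:block'}\subref{subfig:block'2}; telescoping these shifts over the appropriate range produces the stated closed formula.

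The hardest part will be verifying the precise filtration formula $\big(\tfrac{k+1}{2}+n-s\big)k+\tfrac{(n-s)(n-s+1)}{2}$ in the $k\geq 2$ case. Here $C'_{n,k}$ has its $y_j$'s spread across many filtration levels, so the normalization of $C'_{n,k}$ at $(0,0)$ interacts nontrivially with the filtration levels of each pair of $D_s$ summands, and the $k$-dependent linear term and the $s$-dependent triangular term must be tracked simultaneously through the ``left'' and ``right'' families of splittings. This bookkeeping is combinatorial but error-prone, and is where I expect the bulk of the work to concentrate.
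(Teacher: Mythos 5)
Your proposal follows essentially the same route as the paper's proof: split off the $D_s$ pairs via Lemma \ref{le:simul'}, identify the remaining central quotient blockwise with $C_0\oplus D_n$, $C_0$, or $C'_{n,k}$, obtain the filtration levels by telescoping the block-to-block shift $(n+k-s,n-s)$ after normalizing $C_0$ (resp.\ $C'_{n,k}$) at $(0,0)$, fix the Maslov grading by locating the homology among the $y_j$'s, and read the markings off Figure \ref{fig:block'}\subref{subfig:block'2}. One small correction: $\sum_j y_j$ need not generate $H_*(\CFKi(S^3,K^{-}_{n,n+k}))$ (when the number of $y_j$'s is even it is a boundary, e.g.\ $y_1+y_2=\partial x^{(1)}_0$); instead use that the $y_j$'s are pairwise homologous cycles and any single one generates $H_*\cong\HFi(S^3)$, which is exactly the paper's argument and yields the same conclusion that every $y_j$, hence every $C_0$ and $D_s$ summand, sits in Maslov grading $0$.
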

\begin{proof}
      From the proof of Lemma \ref{le:simul'} we see that $y_s$ and $y_{s+1}$ has a filtration difference of 
    \begin{align}
       \label{eq:prooffil'}  (n+k-s, n-s)  \hspace{3em} \text{for } \quad 1\leq s \leq n.
    \end{align} Each $D_s$ is supported by $y_s$ for $1\leq s \leq n.$ For the first two cases, fix a basis such that $C_0$ is supported in $(0,0)$.  For the last case, fix a basis such that $C'_{n,k}$ is supported in $(0,0)$, and then it is straightforward to check that $y_1 \in C'_{n,k}$ has filtration level $((k-1)k/2,0)$. By \eqref{eq:prooffil'} we can determine the support of all the $D_s$ that come from $\wH'(s)$ for $1\leq s \leq n.$ The filtration levels of the remaining complex  follows from the symmetry of the knot Floer complex.

  Each  $y_s$ generator has the same Maslov grading $t$, for some integer $i$, each  $x^{(s)}_{2i}$ generator  has the same Maslov grading $t+1$ and each   $x^{(s)}_{2i+1}$ generator has the same Maslov grading $t+2$. Since the homology is supported in some $y_s$, we have $t=0.$ The statement regarding the marked basis can be read off from Figure \ref{fig:block'}\subref{subfig:block'2}.
\end{proof}

\subsection{Case $K^{+}([2n_1,1,-2n_2])$ with $n_1 >0,n_2>1$}\label{subsec:class3}
For suitable orientations of $\aa$ and $\bb$ curve in the $(1,1)$ diagram, the induced orientation for each bigon is the same. See for example, Figure \ref{fig:class3_1} and \ref{fig:class3_2}. By \cite[Theorem 1.2]{GLV}, $K^{+}([2n_1,1,-2n_2])$ is a (negative) $(1,1)$ L-space knot. Therefore in order to pin down its knot Floer complex, it suffices to record  the length of  (say)  horizontal arrows.
\begin{figure}[htb!]
    \centering
    \begin{minipage}{0.5\linewidth}
     \centering
       \subfloat[The $(1,1)$ diagram when $n_1 = 2 , n_2 = 4 $.]{
       \begin{tikzpicture}
         \begin{scope}[thin, black!0!white]
          \draw  (-4, 0) -- (5.5, 0);
      \end{scope}
                 \node at (0.8,0) {\includegraphics[scale=0.3]{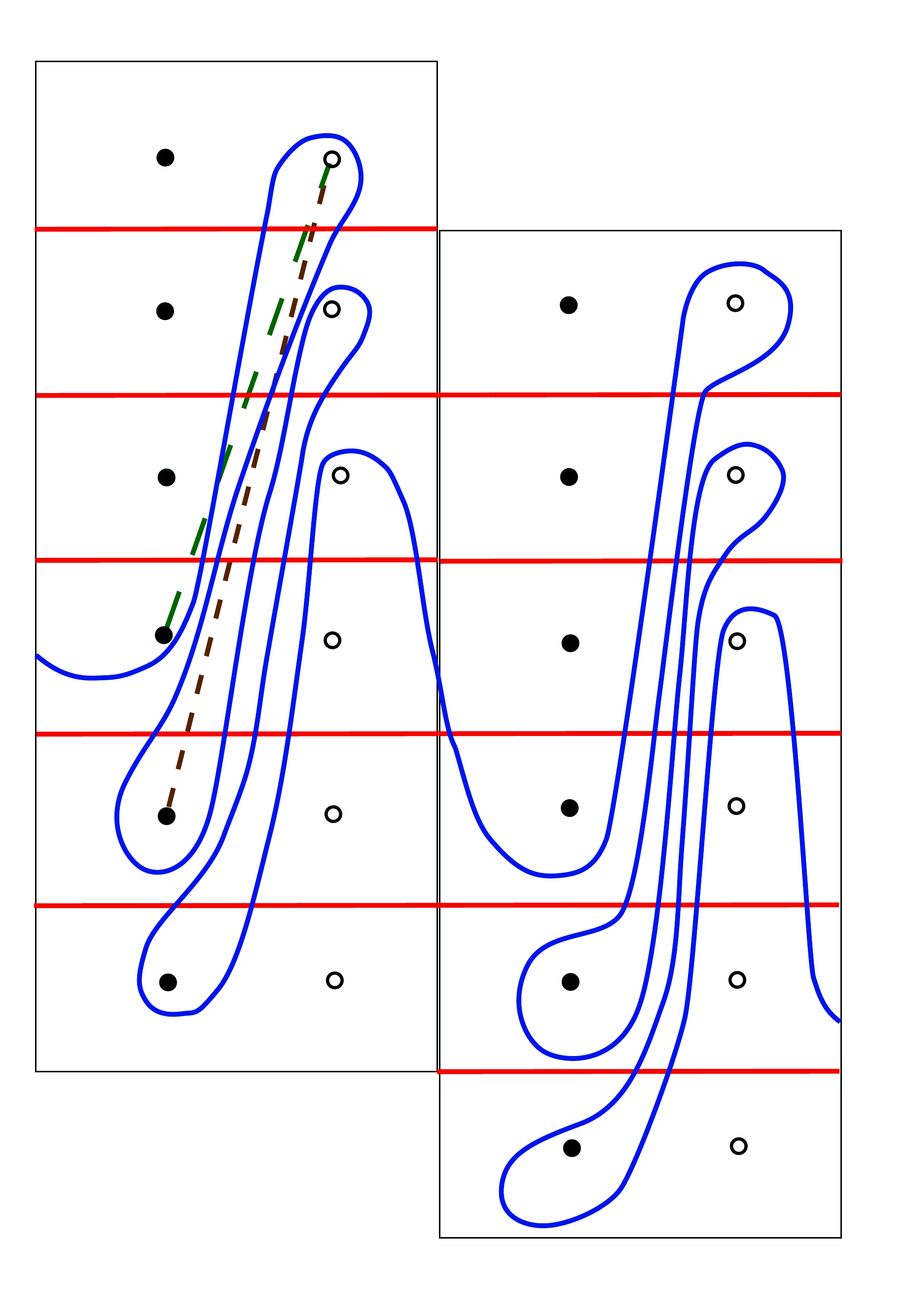}};
 \draw [decorate,decoration={brace,amplitude=4pt},xshift=0.5cm,yshift=0pt]
      (-0.2,1.4) -- (-0.2,-1) node [midway,right,xshift=0cm,yshift=0.1cm] {};
      \node at (1.39,0.45) {\small$n_2 - n_1$};
       \node at (1.1,0) {\small$+1$};
 \draw [decorate,decoration={brace,amplitude=4pt},xshift=0.5cm,yshift=0pt]
      (0,3.4) -- (0,2) node [midway,right,xshift=0cm,yshift=5pt] {\small$n_1$};
       \draw [decorate,decoration={brace,amplitude=4pt,mirror},xshift=0.5cm,yshift=0pt]
      (-1.6,3.2) -- (-1.6,0) node [midway,left,xshift=-0.1cm,yshift=5pt] {\small$n_2$};
       \draw [decorate,decoration={brace,amplitude=4pt,mirror},xshift=0.5cm,yshift=0pt]
      (-1.9,-0.8) -- (-1.9,-2.4) node [midway,left,xshift=0cm] {\small$n_1$};
    \node at (-2.2,2.7) [text=red] {$\alpha_5$};
        \node at (-2.2,1.6) [text=red] {$\alpha_4$};
\node at (-2.2,0.6) [text=red] {$\alpha_3$};
  \node at (-2.2,-0.55) [text=red] {$\alpha_2$};
      \node at (-2.2,-1.6) [text=red] {$\alpha_1$};
       \node at (-0.7,1.8) [text=OliveGreen] {$\gamma_-$};
 \node at (-1.2,-0.75) [text=Brown] {$\gamma_+$};
       \end{tikzpicture}
       \label{subfig:lspaceblock+1}
       }
    \end{minipage}%
     \centering
    \begin{minipage}{0.5\linewidth}
        \subfloat[When $1\leq s \leq n_1 -1.$]{
         \begin{tikzpicture}
           \node at (0,0) {\includegraphics[scale=0.4]{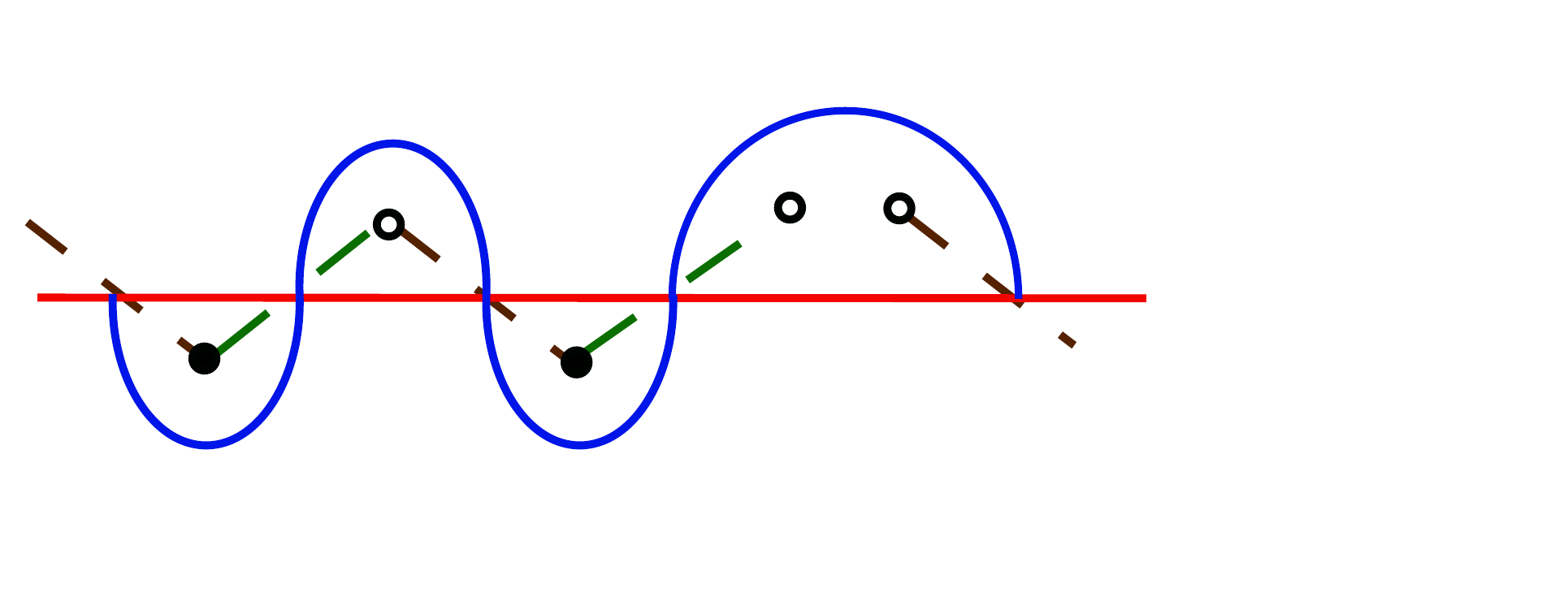}};
            \node at (-4.1,0) [text=red] {$\alpha_s$};
                     \draw [decorate,decoration={brace,amplitude=4pt},xshift=0.5cm,yshift=0pt]
      (-2.9,0.75) -- (-2.05,0.75) node [midway,above,yshift=0.15cm] {\small$s-1$};
             \node at (-1.9,-0.5) {\small$\cdots$};
             \draw [decorate,decoration={brace,amplitude=4pt,mirror},xshift=0.5cm,yshift=0pt]
      (-4,-0.7) -- (-1,-0.7) node [midway,below,yshift=-0.15cm] {\small$s$};
            \node at (0.34,0.42) {\tiny$\cdots$};
             \draw [decorate,decoration={brace,amplitude=4pt},xshift=0.5cm,yshift=0pt]
      (-0.6,0.7) -- (0.2,0.7) node [midway,above,yshift=0.15cm] {\small$n_1 + n_2-2s$};
         \node at (-2.1,-0.2) [text=OliveGreen] {$\gamma_-$};
    \node at (-0.2,-0.2) [text=OliveGreen] {$\gamma_-$};
 \node at (-1.3,0.2) [text=Brown] {$\gamma_+$};
\node at (1.7,-0.2) [text=Brown] {$\gamma_+$};
\node at (-3.2,0.2) [text=Brown] {$\gamma_+$};
       \end{tikzpicture}
       \label{subfig:lspaceblock+2}
        }\\
          \subfloat[When $n_1\leq s \leq n_2 -1.$]{
         \begin{tikzpicture}
          \node at (-4.2,0) [text=red] {$\alpha_s$};
           \node at (0,0) {\includegraphics[scale=0.4]{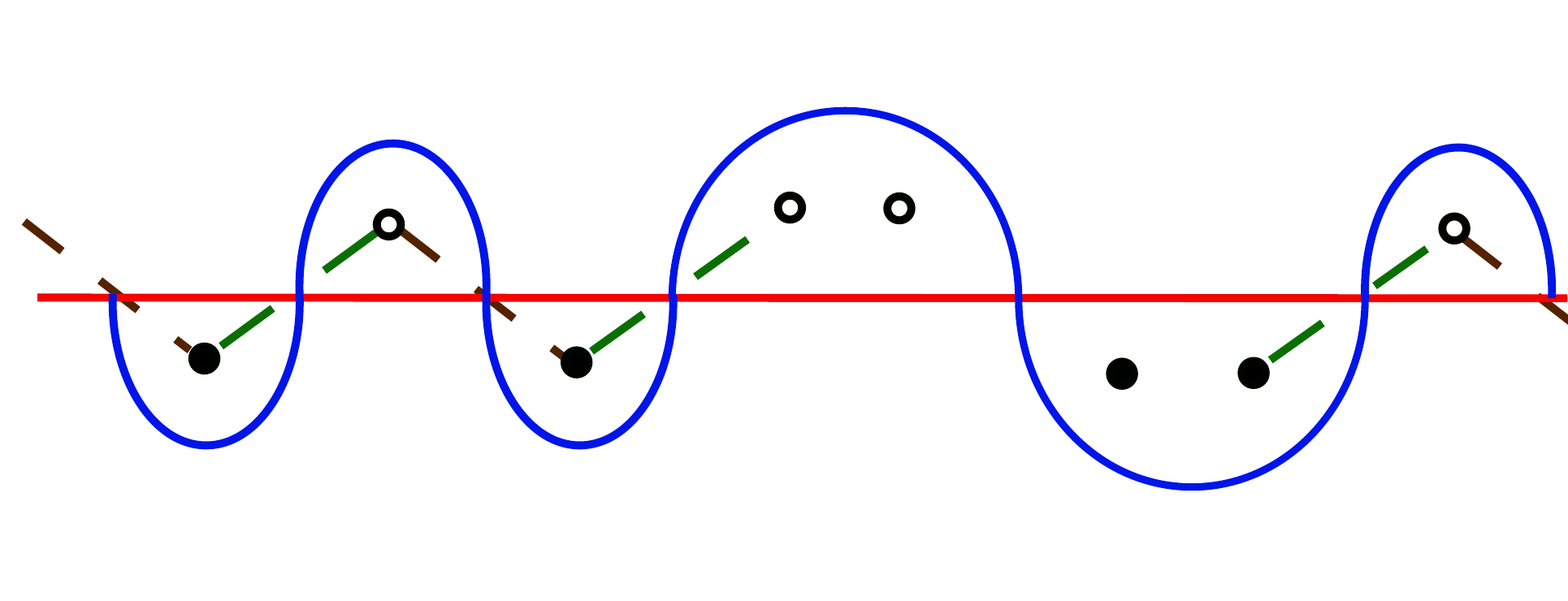}};
                     \draw [decorate,decoration={brace,amplitude=4pt},xshift=0.5cm,yshift=0pt]
      (-2.9,0.75) -- (-2.05,0.75) node [midway,above,yshift=0.15cm] {\small$n_1-1$};
             \node at (-1.9,-0.5) {\small$\cdots$};
             \draw [decorate,decoration={brace,amplitude=4pt,mirror},xshift=0.5cm,yshift=0pt]
      (-4,-0.7) -- (-1,-0.7) node [midway,below,yshift=-0.15cm] {\small$n_1$};
            \node at (0.34,0.42) {\tiny$\cdots$};
             \draw [decorate,decoration={brace,amplitude=4pt},xshift=0.5cm,yshift=0pt]
      (-0.6,0.7) -- (0.2,0.7) node [midway,above,yshift=0.15cm] {\small$n_2-s$};
           \node at (2,-0.4) {\tiny$\cdots$};
             \draw [decorate,decoration={brace,amplitude=4pt,mirror},xshift=0.5cm,yshift=0pt]
      (1.1,-0.6) -- (2,-0.6) node [midway,below,yshift=-0.15cm] {\small$s-n_1+1$};
      \draw [Brown, thick] (1.2,0) circle (0.2cm);
               \node at (-2.1,-0.2) [text=OliveGreen] {$\gamma_-$};
    \node at (-0.2,-0.2) [text=OliveGreen] {$\gamma_-$};
 \node at (-1.3,0.2) [text=Brown] {$\gamma_+$};
\node at (4,-0.2) [text=Brown] {$\gamma_+$};
\node at (-3.2,0.2) [text=Brown] {$\gamma_+$};
\node at (2.6,0.2) [text=OliveGreen] {$\gamma_-$};
       \end{tikzpicture}
       \label{subfig:lspaceblock+3}
        }
    \end{minipage}
    \caption{The case $n_2 \geq n_1.$ The only generators without a marking are given by the circled intersection point in Figure \protect\subref{subfig:lspaceblock+3}.}
    \label{fig:class3_1}
\end{figure}
\subsubsection{When $n_2 \geq n_1$} The $(1,1)$  diagram in this case is depicted in Figure \ref{fig:class3_1}. We proceed as before. For a fixed lift $\bb,$ label all the lifts of $\alpha$ which intersect $\bb$ by $\alpha_1, \cdots, \alpha_{n_1+n_2-1}.$ Identify $\aa$ with $\alpha_s$ for $1\leq s\leq n_1+n_2-1$ in the $s$-th block.  There is an ambiguity in defining the end point of each block. We define the end point of each block to be the first intersection point after $\bb$ travels above a $w$ basepoint in the next block. See the right hand side of Figure \ref{fig:class3_1}.

By Definition \ref{def:gamma}, the slope of $\gamma_+$ is $n_2$ and the slope of  $\gamma_-$ is $-(-n_2+1)=n_2-1$.
Observe also that aside from the intersection point circled in Figure \ref{fig:class3_1}  \subref{subfig:lspaceblock+3},  every intersection point is marked exactly once, where the sign of the marking depends on the parity of the Maslov grading.

We simply count the number of basepoints in the bigons in the upper half plane. This is given by 
\begin{align*}
   \overbrace{1,\cdots,1}^{s-1},n_1+n_2-2s 
   \intertext{for  $1\leq s \leq n_1-1$, and }
   \overbrace{1,\cdots,1}^{n_1-1},n_2 - s, 1 
\end{align*}
for  $n_1\leq s \leq n_2-1$. When $n_1 \neq n_2,$ we have $n_2-1 > (n_1 + n_2 -1)/2$, therefore by the symmetry  we can fill out  the remaining complex. Specifically, the horizontal-vertical arrows of length $(n_2-n_1,1)$ in the $(n_1)$-th block is identified after reflection with the horizontal-vertical arrows of length $(1,n_2-n_1)$ in the $(n_2-1)$-th block.  It follows that the remaining horizontal arrows are all of length $1$, and there are in total 
\[
 \left(\sum^{n_1 }_{i=1}i \right) -1 = \frac{(n_1+2)(n_1-1)}{2}
\]
of them. In summary, we have shown that the sequence of length of horizontal arrows in the knot Floer complex is given by  
\[
\underbrace{\overbrace{1,\cdots,1}^{s-1},n_1+n_2-2s} _{\text{for } 1\leq s \leq n_1-1},  \underbrace{\overbrace{1,\cdots,1}^{n_1-1},n_2 - s, 1 }_{\text{for } n_1\leq s \leq n_2-1},  \overbrace{1,\cdots,1}^{\frac{(n_1+2)(n_1-1)}{2}}.
\]
Moreover, the only generators without a marking are those whose outgoing horizontal arrows are of length $n_2 - s$ in the $s$-th block for $n_1\leq s \leq n_2-1.$ The above analysis does not cover the case when $n_1 = n_2,$ but it is straightforward to check that the conclusion also holds  there. (When $n_1 = n_2,$ the $(n_1)$-th block consists of $n_1-1$ bigons  in each half plane, where each bigon has exactly one basepoint, and the rest of the complex follows from symmetry.)
\subsubsection{When $n_2 < n_1$} The $(1,1)$  diagram in this case is depicted in Figure \ref{fig:class3_2}. This case is parallel to the previous case, and one can similarly work out the sequence of length of horizontal arrows using the right hand side diagrams in  Figure \ref{fig:class3_2}. Putting together the discussion on both cases, we have proved the following.
\begin{figure}[htb!]
    \centering
    \begin{minipage}{0.5\linewidth}
     \centering
       \subfloat[The $(1,1)$ diagram when $n_1 = 4 , n_2 = 2 $.]{
       \begin{tikzpicture}
       \begin{scope}[thin, black!0!white]
          \draw  (-4, 0) -- (5.5, 0);
      \end{scope}
                 \node at (0.8,0) {\includegraphics[scale=0.3]{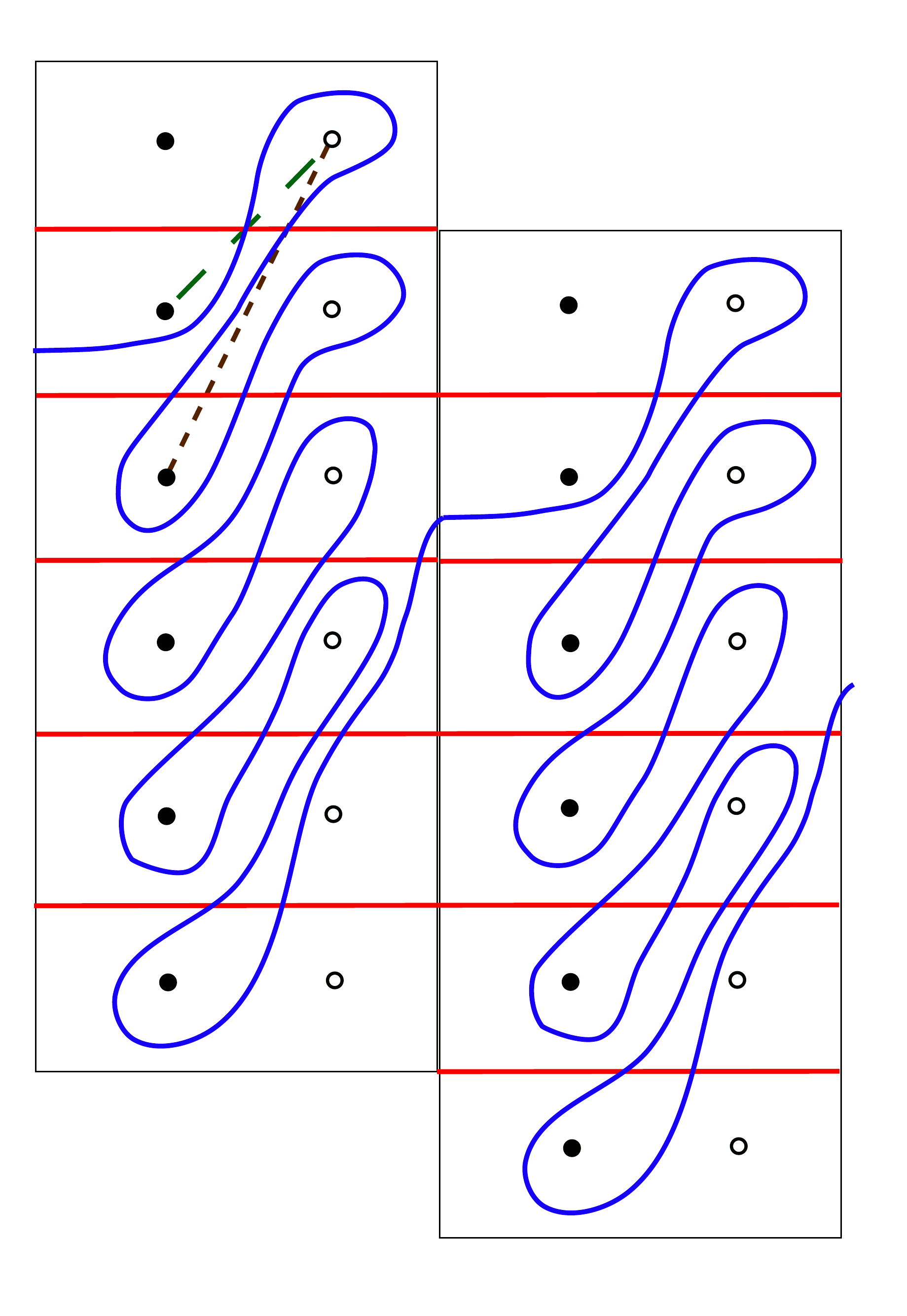}};
 \draw [decorate,decoration={brace,amplitude=4pt},xshift=0.5cm,yshift=0pt]
      (-0.2,-0.8) -- (-0.2,-2) node [midway,right,xshift=0cm,yshift=0.1cm] {\small$n_2$};
 \draw [decorate,decoration={brace,amplitude=4pt},xshift=0.5cm,yshift=0pt]
      (0,3.4) -- (0,0) node [midway,right,xshift=0.2cm,yshift=5pt] {\small$n_1$};
       \draw [decorate,decoration={brace,amplitude=4pt,mirror},xshift=0.5cm,yshift=0pt]
      (-1.6,3.3) -- (-1.6,2) node [midway,left,xshift=-0.1cm,yshift=5pt] {\small$n_2$};
       \draw [decorate,decoration={brace,amplitude=4pt,mirror},xshift=0.6cm,yshift=0pt]
      (-1.9, 1.4) -- (-1.9,-2.4) node [midway,left,yshift=0.15cm] {\small$n_1$};
    \node at (-2.2,2.7) [text=red] {$\alpha_5$};
        \node at (-2.2,1.6) [text=red] {$\alpha_4$};
\node at (-2.2,0.6) [text=red] {$\alpha_3$};
  \node at (-2.2,-0.55) [text=red] {$\alpha_2$};
      \node at (-2.2,-1.6) [text=red] {$\alpha_1$};
          \node at (-0.6,3) [text=OliveGreen] {$\gamma_-$};
 \node at (0.3,2.8) [text=Brown] {$\gamma_+$};
       \end{tikzpicture}
       \label{subfig:lspaceblock+4}
       }
    \end{minipage}%
     \centering
    \begin{minipage}{0.5\linewidth}
        \subfloat[When $1\leq s \leq n_2-1.$]{
         \begin{tikzpicture}
           \node at (0,0) {\includegraphics[scale=0.4]{lspaceblock+2}};
            \node at (-4.1,0) [text=red] {$\alpha_s$};
                     \draw [decorate,decoration={brace,amplitude=4pt},xshift=0.5cm,yshift=0pt]
      (-2.9,0.75) -- (-2.05,0.75) node [midway,above,yshift=0.15cm] {\small$s-1$};
             \node at (-1.9,-0.5) {\small$\cdots$};
             \draw [decorate,decoration={brace,amplitude=4pt,mirror},xshift=0.5cm,yshift=0pt]
      (-4,-0.7) -- (-1,-0.7) node [midway,below,yshift=-0.15cm] {\small$s$};
            \node at (0.34,0.42) {\tiny$\cdots$};
             \draw [decorate,decoration={brace,amplitude=4pt},xshift=0.5cm,yshift=0pt]
      (-0.6,0.7) -- (0.2,0.7) node [midway,above,yshift=0.15cm] {\small$n_1 + n_2-2s$};
            \node at (-2.1,-0.2) [text=OliveGreen] {$\gamma_-$};
    \node at (-0.2,-0.2) [text=OliveGreen] {$\gamma_-$};
 \node at (-1.3,0.2) [text=Brown] {$\gamma_+$};
\node at (1.7,-0.2) [text=Brown] {$\gamma_+$};
\node at (-3.2,0.2) [text=Brown] {$\gamma_+$};
       \end{tikzpicture}
       \label{subfig:lspaceblock+2'}
        }\\
          \subfloat[When $n_2\leq s \leq n_1 -1.$]{
         \begin{tikzpicture}
          \node at (-4.2,0) [text=red] {$\alpha_s$};
           \node at (0,0) {\includegraphics[scale=0.4]{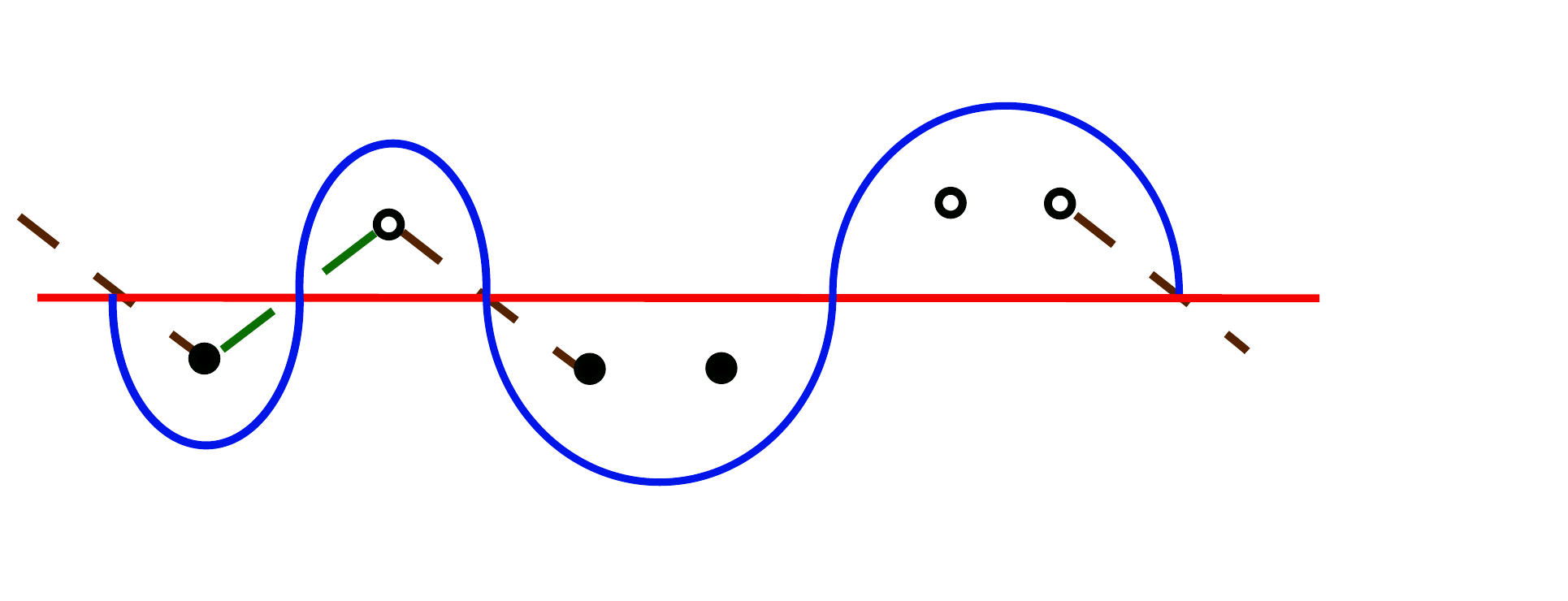}};
                     \draw [decorate,decoration={brace,amplitude=4pt},xshift=0.5cm,yshift=0pt]
      (-2.9,0.75) -- (-2.05,0.75) node [midway,above,yshift=0.15cm] {\small$n_2-1$};
             \node at (-1.9,-0.5) {\small$\cdots$};
             \draw [decorate,decoration={brace,amplitude=4pt,mirror},xshift=0.5cm,yshift=0pt]
      (-3.9,-0.7) -- (-2.9,-0.7) node [midway,below,yshift=-0.15cm] {\small$n_2-1$};
            \node at (1.15,0.42) {\tiny$\cdots$};
             \draw [decorate,decoration={brace,amplitude=4pt},xshift=0.5cm,yshift=0pt]
      (0.2,0.7) -- (1.1,0.7) node [midway,above,yshift=0.15cm] {\small$n_1-s$};
           \node at (-0.63,-0.4) {\tiny$\cdots$};
             \draw [decorate,decoration={brace,amplitude=4pt,mirror},xshift=0.5cm,yshift=0pt]
      (-1.6,-0.6) -- (-0.7,-0.6) node [midway,below,yshift=-0.15cm] {\small$s-n_2+1$};
          \node at (-2.1,-0.2) [text=OliveGreen] {$\gamma_-$};
  \draw [OliveGreen, thick] (0.26,0) circle (0.2cm);
 \node at (-1.3,0.2) [text=Brown] {$\gamma_+$};
\node at (1.9,-0.2) [text=Brown] {$\gamma_+$};
\node at (-3.2,0.2) [text=Brown] {$\gamma_+$};
       \end{tikzpicture}
       \label{subfig:lspaceblock+5}
        }
    \end{minipage}
    \caption{The case $n_2 < n_1.$ The only generators without a marking are given by the intersection point circled in Figure \protect\subref{subfig:lspaceblock+5}.}
    \label{fig:class3_2}
\end{figure}
\begin{proposition}\label{prop:class3}
    For $n_1>0,n_2>1$, $K^{+}([2n_1,1,-2n_2])$ is a negative L-space knot, with the length of horizontal arrows given in order by the follows. \begin{itemize}
        \item When $n_2 \geq n_1,$
        \begin{align*} 
  \underbrace{\overbrace{1,\cdots,1}^{s-1},n_1+n_2-2s} _{\text{for } 1\leq s \leq n_1-1},  \underbrace{\overbrace{1,\cdots,1}^{n_1-1},\overline{n_2 - s}, 1 }_{\text{for } n_1\leq s \leq n_2-1},  \overbrace{1,\cdots,1}^{\frac{(n_1+2)(n_1-1)}{2}}.
\end{align*}
Given a basis, each generator with odd Maslov grading has one $-$ marking.
Each generator whose outgoing horizontal arrow is marked by the overline has no marking. Apart from them, each generator with even Maslov grading has one $+$ marking.
\item When $n_2 < n_1,$
        \begin{align*} 
  \underbrace{\overbrace{1,\cdots,1}^{s-1},n_1+n_2-2s} _{\text{for } 1\leq s \leq n_2-1},  \underbrace{\overbrace{1,\cdots,1}^{n_2-1},\overline{n_1 - s}} _{\text{for } n_2 \leq s \leq n_1-1},  \overbrace{1,\cdots,1}^{\frac{(n_2+2)(n_2-1)}{2}}.
\end{align*}
Given a basis, each generator whose incoming horizontal arrow is marked by the overline has no marking. Apart from them, each generator with odd Maslov grading has one $-$ marking. 
Each generator with even Maslov grading has one $+$ marking.
    \end{itemize}   
\end{proposition}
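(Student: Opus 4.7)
The plan is to directly read off the knot Floer complex from the $(1,1)$ diagrams in Figures \ref{fig:class3_1} and \ref{fig:class3_2}, exploiting the L-space property to reduce the data to be computed. First I would verify the L-space knot claim: by inspection, in both diagrams one can coherently orient $\wa$ and $\wb$ so that every bigon inherits the same induced orientation, so \cite[Theorem 1.2]{GLV} applies and $K^{+}([2n_1,1,-2n_2])$ is a $(1,1)$ L-space knot. This is crucial because it reduces the complex to a staircase, so it suffices to record the ordered sequence of horizontal arrow lengths, equivalently the number of $w$-basepoints in each consecutive bigon of the upper half-plane as $\wb$ is traversed.

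Next I would decompose $\wa \cap \wb$ into blocks indexed by $s = 1, \dots, n_1+n_2-1$, where in the $s$-th block $\wa$ is identified with $\alpha_s$; as in Section \ref{subsec:class1}, the end point of each block is defined to be the first intersection after $\wb$ first travels above a $w$-basepoint in the next block. From Figures \ref{fig:class3_1}\subref{subfig:lspaceblock+2} and \subref{subfig:lspaceblock+3} (and the mirror Figures \ref{fig:class3_2}\subref{subfig:lspaceblock+2'} and \subref{subfig:lspaceblock+5}) I would directly count the basepoints in the bigons in the upper half-plane of each block. For the case $n_2 \geq n_1$ this yields the sequence $\overbrace{1,\dots,1}^{s-1}, n_1+n_2-2s$ for $1 \leq s \leq n_1-1$ followed by $\overbrace{1,\dots,1}^{n_1-1}, n_2-s, 1$ for $n_1 \leq s \leq n_2-1$, giving the first half of the complex. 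The remaining length-one arrows are then filled in by the symmetry $\CFKi \cong \CFKi^*$ of the knot Floer complex: a horizontal arrow of length $\ell$ at height $h$ is matched with a vertical arrow of length $\ell$ at horizontal position $h$ after reflection, which pins down the total count of $\frac{(n_1+2)(n_1-1)}{2}$ trailing unit arrows. The case $n_2 < n_1$ proceeds identically using Figure \ref{fig:class3_2}.

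Finally, I would determine the markings using Definition \ref{def:marked}. The slopes of $\gamma_+$ and $\gamma_-$ in this setting are $n_2$ and $n_2-1$ respectively, so each arc of type $\gamma_{\pm}$ picks out a unique ``closest'' intersection point forming an $(\alpha,\beta,\gamma)$ triangle with $a_{\pm}$ and $b_{\pm}$. In each of Figures \ref{fig:class3_1} and \ref{fig:class3_2}, the configuration shows that every intersection point except for the one circled carries exactly one marking, whose sign ($+$ or $-$) is controlled by the parity of the Maslov grading (since in the staircase the gradings alternate along the sequence of arrows). The one exception per block (for $n_1 \leq s \leq n_2-1$ in the first case and $n_2 \leq s \leq n_1-1$ in the second) corresponds exactly to the generator meeting the horizontal arrow of length $n_2-s$ or $n_1-s$, respectively; this is the overlined entry in the sequence.

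The main obstacle is a bookkeeping one rather than a conceptual one: one must carefully verify the end-of-block convention so that the bigon counts on the right-hand sides of Figures \ref{fig:class3_1} and \ref{fig:class3_2} agree with the sequence claimed, particularly in the transition from the ``pre-peak'' regime $s \leq \min(n_1,n_2)-1$ to the ``post-peak'' regime. A small sanity check is the degenerate case $n_1 = n_2$, where the middle block has $n_1-1$ bigons in each half plane each carrying one basepoint, so the two regimes of the sequence glue seamlessly and the symmetric completion recovers the total; confirming this will validate the general argument.
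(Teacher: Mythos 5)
Your proposal is correct and follows essentially the same route as the paper: invoke \cite[Theorem 1.2]{GLV} to reduce to recording the horizontal arrow lengths, count basepoints in the upper half-plane bigons block by block (with the stated end-of-block convention), complete the sequence via the symmetry of the knot Floer complex to get the trailing $\frac{(n_1+2)(n_1-1)}{2}$ (resp.\ $\frac{(n_2+2)(n_2-1)}{2}$) unit arrows, treat $n_1=n_2$ as a separate check, and read off the markings from Definition \ref{def:marked} with the single unmarked (circled/overlined) generator per relevant block. The only minor slip is attributing the block-endpoint convention to Section \ref{subsec:class1}, whereas it is introduced in Section \ref{subsec:class3} itself; the convention you state is nonetheless the correct one.
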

\subsection{Case $K^{-}([2n_1,1,-2n_2])$ with $n_1>0,n_2>1$} \label{subsec:class4}  By \cite[Theorem 1.2]{GLV}, $K^{-}([2n_1,1,-2n_2])$ is a positive $(1,1)$ L-space knot. It suffices to record (say) the length of vertical arrows. The process to determine such a sequence is entirely parallel to the process in Section \ref{subsec:class3}. Therefore we will skip the proof, giving only the conclusion, as follows.
\begin{proposition}\label{prop:class4}
    For $n_1>0,n_2>1$, $K^{-}([2n_1,1,-2n_2])$ is a positive L-space knot, with the length of vertical arrows given in order by the follows.
    \begin{itemize}
        \item When $n_2 \geq n_1 +2,$
        \begin{align*} 
  \underbrace{\overbrace{1,\cdots,1}^{s-1},n_1+n_2-2s} _{\text{for } 1\leq s \leq n_1},  \underbrace{\overbrace{1,\cdots,1}^{n_1},\overline{n_2 - s- 1}} _{\text{for } n_1+1\leq s \leq n_2-2},  \overbrace{1,\cdots,1}^{\frac{n_1(n_1+3)}{2}}.
\end{align*}
Given a basis, each generator with even Maslov grading has one $-$ marking. Each generator whose outgoing vertical arrow is marked by the overline has no marking. Apart from them, each generator with odd Maslov grading has one $+$ marking. 
\item When $n_2 \leq n_1+1,$
        \begin{align*} 
  \underbrace{\overbrace{1,\cdots,1}^{s-1},n_1+n_2-2s} _{\text{for } 1\leq s \leq n_2-2}, \overbrace{1,\cdots,1}^{n_2-2},\overline{n_1-n_2+1},  \underbrace{\overbrace{1,\cdots,1}^{n_2-1},\overline{n_1 - s+1}} _{\text{for } n_2 \leq s \leq n_1},  \overbrace{1,\cdots,1}^{\frac{n_2(n_2-1)}{2}}.
\end{align*}
Given a basis,  each generator whose incoming vertical arrow is marked by the overline has no marking.  Apart from them, each generator with even Maslov grading has one $-$ marking.  Each generator with odd Maslov grading has one $+$ marking.
    \end{itemize}  
\end{proposition}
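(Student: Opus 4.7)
The plan is to mirror the proof of Proposition \ref{prop:class3}, working with the $(1,1)$ diagram $\H^{-}([2n_1,1,-2n_2])$ produced by applying $\tau^{-2n_2}\sigma\tau^{2n_1}$ to the slope $+1$ diagram of the unknot, following the algorithm of Subsection \ref{subsec:algo}. The only difference from Subsection \ref{subsec:class3} is the starting slope of $\bb$, which is reflected along a horizontal axis; consequently the roles of the upper and lower half-planes are interchanged, so the bigons in each block now open downward and produce \emph{vertical} rather than horizontal arrows, justifying why we record the sequence of vertical arrow lengths. First I would verify that, for a suitable choice of orientations of $\aa$ and $\bb$, every bigon in the resulting diagram acquires a consistent induced orientation; by \cite[Theorem 1.2]{GLV} this establishes the positive L-space property and reduces the task to computing one sequence of arrow lengths.

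Next, I would fix a lift $\bb$ and enumerate the lifts of $\alpha$ meeting it as $\alpha_1,\dots,\alpha_{n_1+n_2-1}$, identifying $\aa$ with $\alpha_s$ in the $s$-th block, using the analogous block-end convention to the one in Subsection \ref{subsec:class3} (first intersection appearing after $\bb$ passes below a $z$ basepoint of the next block). A direct case analysis on the shape of the $s$-th block then produces, by counting basepoints in the lower half-plane bigons, the leading blocks of the two listed sequences:
\[
\overbrace{1,\dots,1}^{s-1},\ n_1+n_2-2s \qquad \text{for the initial range of $s$,}
\]
followed by the interior portion $\overbrace{1,\dots,1}^{n_1}, \overline{n_2-s-1}$ (respectively $\overbrace{1,\dots,1}^{n_2-1},\overline{n_1-s+1}$) for the middle range. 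The trailing $\frac{n_1(n_1+3)}{2}$ (resp.\ $\frac{n_2(n_2-1)}{2}$) ones, and in the $n_2\le n_1+1$ case the isolated $\overline{n_1-n_2+1}$ term, are then forced by the self-conjugation symmetry of $\CFKi$, exactly as in the closing step of the proof of Proposition \ref{prop:class3}.

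Finally, for the marking assertion I would apply Definition \ref{def:marked} to each $\gamma_+$ (slope $n_2-1$) and $\gamma_-$ (slope $n_2$) in the diagram: for each such arc, locate the unique intersection point of $\aa\cap\bb$ forming an $(\alpha,\beta,\gamma)$ triangle with $a_\pm$ and $b_\pm$, and decorate it with the corresponding sign. As in Proposition \ref{prop:class3}, essentially every generator receives exactly one marking whose sign is dictated by the parity of its Maslov grading, and the generators adjacent to an overlined arrow in the block diagram are precisely the ones that fail to be marked, because the relevant $\gamma_\pm$ misses $\aa$ in that block.

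The main obstacle will be the bookkeeping for the intermediate case $n_2\le n_1+1$, where the partition of the sequence has an extra isolated entry $\overline{n_1-n_2+1}$ between the two bracketed ranges. This corresponds to a single transitional block in which the slope $n_2$ and slope $n_2-1$ arcs produce qualitatively different counts from the generic blocks, and one must check carefully that the basepoint counts and the triangle-condition for the markings yield precisely the displayed term. Once that transitional block is pinned down, the symmetry argument and the parallel treatment of markings give the rest with no further difficulty.
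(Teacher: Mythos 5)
Your plan is essentially the paper's own proof: the paper explicitly omits the argument for this proposition, stating that it is ``entirely parallel'' to the treatment of $K^{+}([2n_1,1,-2n_2])$ in the preceding subsection, and your outline (coherent orientations plus \cite[Theorem 1.2]{GLV} for the positive L-space property, block-by-block basepoint counts in the reflected diagram obtained from the slope $+1$ starting curve, completion of the sequence by the symmetry of $\CFKi$, and markings via Definition \ref{def:marked}) is exactly that parallel argument, including the correct identification of the transitional block responsible for the isolated $\overline{n_1-n_2+1}$ entry.

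One concrete slip needs fixing: you assign $\gamma_+$ slope $n_2-1$ and $\gamma_-$ slope $n_2$, which is backwards. The slopes of $\gamma_\pm$ are determined by the tangle $[2n_1,1,-2n_2]$ alone (Definition \ref{def:gamma}), not by the sign of the blow-down, so they are the same as in the $K^{+}([2n_1,1,-2n_2])$ case: $\gamma_+$ has slope $n_2$ and $\gamma_-$ has slope $n_2-1$. This is not cosmetic, because the content of the marking statement is precisely which sign goes with which Maslov parity, and that correspondence is \emph{flipped} here relative to Proposition \ref{prop:class3} (even grading gets the $-$ marking, odd grading gets the $+$ marking); with your swapped labels the triangle construction of Definition \ref{def:marked} would attach the wrong signs to the marked generators. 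Correct the identification of $\gamma_\pm$ (the rest of your marking argument, including which generators near the overlined arrows are unmarked, then goes through), and, when invoking the symmetry to close off the sequence, verify that the tail count indeed comes out to $\frac{n_1(n_1+3)}{2}$ (resp.\ $\frac{n_2(n_2-1)}{2}$), in the same way the analogous count $\frac{(n_1+2)(n_1-1)}{2}$ was checked in the proof of Proposition \ref{prop:class3}.
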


\bibliographystyle{amsalpha}
\bibliography{bibliography}

\end{document}